%% file: final_arxiv.tex
\documentclass[11pt]{amsart}

\input{style}

\usepackage[margin=1.15in]{geometry}
\usepackage{pgfplots}
\pgfplotsset{compat=1.18}
\usepgfplotslibrary{fillbetween}
\usetikzlibrary{arrows.meta,calc,decorations.pathmorphing,positioning,shapes.geometric,backgrounds}

\usetikzlibrary{fit}
\definecolor{figteal}{RGB}{0,128,128}
\definecolor{figorange}{RGB}{217,119,6}
\definecolor{figgray}{RGB}{120,120,120}
\definecolor{figred}{RGB}{197,52,46}
\pgfplotsset{
	figaxis/.style={
		axis line style={gray!55},
		tick style={gray!55},
		label style={font=\small},
		tick label style={font=\footnotesize},
		title style={font=\small,figteal!75!black},
		legend cell align={left},
		legend style={font=\footnotesize,draw=gray!45,rounded corners=1pt,
			fill=white,fill opacity=0.92,text opacity=1},
		grid style={gray!18,line width=0.3pt},
	},
}
\tikzset{
	blob/.style={circle,draw=figteal!85!black,fill=figteal!16,thick,
		minimum size=15pt,inner sep=1pt},
	rootblob/.style={circle,draw=figteal!88!black,fill=figteal!30,very thick,
		minimum size=20pt,inner sep=1pt},
	decoration node/.style={circle,draw=figorange!85!black,fill=figorange!16,
		thick,inner sep=0pt},
	shortcut/.style={figorange!88!black,densely dashed,thick},
	treeedge/.style={figteal!75!black,thick},
	callout/.style={draw=gray!60,fill=white,rounded corners=2pt,
		align=center,inner sep=3pt,font=\footnotesize},
	paneltitle/.style={figteal!75!black,font=\small},
	faded/.style={draw=gray!55,fill=gray!12,text=gray!70},
	badedge/.style={figred!90!black,densely dashed,very thick},
	stagebox/.style={draw=gray!60,fill=gray!5,rounded corners=2pt,
		thick,align=center,inner sep=5pt,font=\small},
	panel/.style={draw=gray!40,fill=gray!4,rounded corners=3pt,
		thick,inner sep=0pt},
	flowarrow/.style={-{Latex[length=2.6mm]},thick,gray!75!black},
}

\title[Subcritical percolation and network archaeology]
{Subcritical percolation and network archaeology on random recursive tree substrate networks}
\author{Shankar Bhamidi}
\author{Akshay Sakanaveeti}
\date{July 23, 2026}

\newcommand{\PP}{\mathbb P}
\newcommand{\EE}{\mathbb E}
\newcommand{\Var}{\operatorname{Var}}

\newcommand{\Poi}{\operatorname{Poi}}
\newcommand{\rank}{\operatorname{rank}}

\newcommand{\ErdosRenyi}{\operatorname{ER}}

\newcommand{\full}{\operatorname{full}}

\newcommand{\Root}{\operatorname{root}}

\newcommand{\TopJ}{\operatorname{TopJ}}
\newcommand{\Comp}{\operatorname{Comp}}

\begin{document}
	
	\begin{abstract}
		We study a network-archaeology problem for a dynamic graph whose latent
		substrate is a random recursive tree and whose observed topology is enriched
		by an independent homogeneous Erd\H{o}s--R\'enyi shortcut layer.  From a single
		unlabeled snapshot, the goal is to construct a confidence set of deterministic
		size for the first vertex.  Since shortcut edges create cycles, the usual
		tree-based arguments using Jordan centrality do not apply directly.  Our method
		uses auxiliary subcritical bond percolation to expose a tree-like
		renormalized structure: retained recursive-tree clusters form heavy-tailed
		blobs, retained shortcuts connect these blobs through a subcritical rank-one
		random graph, and large components are leading backbone blobs decorated by
		subcritical shortcut pieces.  Applying Jordan centrality inside the largest
		auxiliary percolation components then gives a deterministic-size root
		confidence set for the cyclic observed network.
	\end{abstract}
	
	\maketitle
	
	\section{Introduction}
	\label{sec:introduction}
	
	Many large networks are temporal objects observed only after a long period of
	growth.  Vertex labels, time stamps, and early states may be unavailable, while
	the present-day topology can still retain partial information about the order
	in which the network was formed.  Network archaeology asks how much of this
	latent history can be recovered from a single static snapshot; see, for
	example, the algorithmic and statistical formulations in
	\cite{NavlakhaKingsford2011NetworkArchaeology,YoungStOngeLaurenceMurphyHebertDufresneDesrosiers2019History,CraneXu2020History}.
	A basic probabilistic version is the root-finding problem: given an unlabeled
	graph generated by a growth model, output a vertex set that contains the first
	vertex with high probability and whose size depends on an error tolerance but
	not on the final graph size.
	
	The benchmark theory is for random growing trees.  Deterministic-size
	confidence sets for uniform and preferential attachment trees were established
	in \cite{BubeckDevroyeLugosi2017FindingAdam}.  One of the central statistics
	in this theory is Jordan centrality, or centroid centrality: vertices are
	ranked according to the size of the largest component left after deleting
	them.  Persistence results for this statistic were proved in
	\cite{JogLoh2018}, while root-finding and persistence theorems for broad
	classes of attachment-function-driven growing trees were developed in
	\cite{BanerjeeBhamidi2022JordanRoot} using Crump--Mode--Jagers
	branching-process embeddings.
	
	The tree setting is special in a structural sense.  Removing a vertex from a
	tree exposes a clean family of descendant components, and centrality
	comparisons can be reduced to estimates for subtree sizes.  In a genuine
	network, even a sparse layer of cycles and long-range edges destroys this
	decomposition.  Several recent works have begun to treat non-tree or
	data-noisy versions of network archaeology, including latent growth plus
	Erd\H{o}s--R\'enyi noise and random recursive directed or Cooper--Frieze
	networks~\cite{CraneXu2021LatentGrowth,BriendCalvilloLugosi2022RecursiveDAGs}.
	Very recent work~\cite{DevroyeLugosiMaitra2026NoisyTrees} obtains root-finding
	guarantees for the same model over a broader noise regime; we compare the two
	approaches in Section~\ref{sec:discussion}.
	
	We study a sparse dynamic network model with an explicit random recursive tree
	substrate.  Let \(T_n\) be the random recursive tree on \([n]\), rooted at
	\(1\), and let \(H_n\) be an independent Erd\H{o}s--R\'enyi shortcut graph with
	edge probability \(\gl/n\).  The observed graph is \(G_n=T_n\cup H_n\),
	and the input to the root-finding algorithm is only its unlabeled topology.
	The recursive tree records a latent age order, while the shortcut layer adds
	homogeneous non-genealogical edges.  These shortcuts are sparse enough to
	retain the large-scale imprint of growth, but they hide the genealogical
	decomposition on which the usual tree arguments rely.
	
	Our main device is auxiliary subcritical percolation.  Independently retain
	each edge of \(G_n\); we denote the retention parameter by \(p\) in the
	structural analysis and by \(q\) in the root-finding procedure.
	The retained tree edges partition the latent recursive tree into connected
	clusters, or blobs.  These blobs are heavy-tailed, and the root blob has tight
	rank among the leading blobs.  Conditional on the blob sizes, the retained
	shortcuts induce a rank-one inhomogeneous random graph whose weights are the
	blob sizes.  Below the rank-one critical curve, a leading blob is not absorbed
	into a giant shortcut component; instead it is amplified by a subcritical
	cloud of shortcut decorations.  The effective branching factor in the
	recursive-tree case is
	\[
	\rho(p,\gl)=\frac{p\gl}{1-2p},
	\]
	so the structural subcritical condition is \(p<1/(\gl+2)\).  This
	renormalized description converts the cyclic network back into a decorated
	tree-like object at the scales needed for root finding.
	Figure~\ref{fig:proof-strategy-overview} gives a schematic view of this
	thinning-and-ranking pipeline.
	
	\begin{figure}[tbp]
		\centering
		\begin{tikzpicture}[
			x=1.07cm,y=1.07cm,
			every node/.style={font=\small}
			]
			\path[panel] (-1.25,-0.9) rectangle (1.25,0.9);
			\node[figteal!70!black] at (0,1.13) {observed graph};
			\coordinate (o1) at (-0.78,-0.28);
			\coordinate (o2) at (-0.23,0.36);
			\coordinate (o3) at (0.42,0.16);
			\coordinate (o4) at (0.82,-0.43);
			\coordinate (o5) at (-0.08,-0.60);
			\draw[treeedge] (o1)--(o2)--(o3)--(o4);
			\draw[treeedge] (o2)--(o5);
			\draw[shortcut] (o1) to[bend right=14] (o4);
			\draw[shortcut] (o5) to[bend left=12] (o3);
			\foreach \p in {o1,o2,o3,o4,o5} {
				\node[blob,minimum size=9pt] at (\p) {};
			}
			
			\path[panel] (2.75,-0.9) rectangle (5.25,0.9);
			\node[figteal!70!black] at (4.0,1.13) {\(q\)-percolated blobs};
			\draw[treeedge,line width=1.1pt] (3.25,0.32)--(3.62,0.0)--(3.36,-0.36);
			\foreach \x/\y/\s in {3.25/0.32/9pt,3.62/0.0/12pt,3.36/-0.36/8pt} {
				\node[blob,minimum size=\s] at (\x,\y) {};
			}
			\draw[treeedge,line width=1.1pt] (4.35,0.36)--(4.68,0.02);
			\foreach \x/\y/\s in {4.35/0.36/13pt,4.68/0.02/9pt} {
				\node[rootblob,minimum size=\s] at (\x,\y) {};
			}
			\node[blob,minimum size=10pt] at (4.15,-0.55) {};
			\draw[shortcut] (3.62,0.0) to[bend left=12] (4.35,0.36);
			\draw[shortcut] (4.68,0.02) to[bend left=10] (4.15,-0.55);
			
			\path[panel] (6.95,-0.9) rectangle (9.45,0.9);
			\node[figteal!70!black] at (8.2,1.13) {rank and search};
			\fill[figteal!10,rounded corners=1pt] (7.43,0.05) rectangle (8.96,0.62);
			\foreach \y/\w/\c in {0.48/1.28/teal,0.16/1.00/orange,-0.16/0.76/teal,-0.48/0.55/gray} {
				\draw[draw=\c!70!black,fill=\c!15,rounded corners=1pt,thick]
				(7.58,\y) rectangle ++(\w,0.17);
			}
			\node[figteal!70!black] at (8.23,-0.72) {\(\TopJ_L\) in top \(K\)};
			
			\draw[-{Latex[length=2.6mm]},thick] (1.42,0) -- node[above] {thin} (2.58,0);
			\draw[-{Latex[length=2.6mm]},thick] (5.42,0) -- node[above] {rank} (6.78,0);
		\end{tikzpicture}
		\caption{Schematic overview of the root-finding strategy.  Left: the observed
			graph; the solid and dashed edge styles indicate the latent tree and shortcut
			layers, a distinction unavailable to the algorithm.  Middle: after auxiliary
			\(q\)-percolation, the analysis decomposes the retained graph into tree blobs
			joined by retained shortcut edges.  Right: the resulting components are ranked
			by size, and Jordan centrality is applied within the leading components.}
		\label{fig:proof-strategy-overview}
	\end{figure}
	
	\subsection{Informal description of our contributions}
	The paper has two main conclusions.  First, we prove a subcritical component
	picture for the percolated shortcut model at any retention parameter
	\(p<1/(\gl+2)\): the largest full components have, to first order, the sizes of
	the largest retained recursive-tree blobs multiplied by the factor
	\((1-\rho(p,\gl))^{-1}\).  This is analogous to the subcritical power-law
	component theorem for the configuration model~\cite{Janson2008SubcriticalPowerLaw},
	where the largest subcritical cluster sizes exceed the maximal degree by a
	similar factor; here, however, the rank-one weights are generated dynamically
	by recursive-tree percolation.  Second, for
	\(q<\min\{1/(\gl+2),1/3\}\), we use this component picture as the starting point
	for a root-finding algorithm.  The root component appears among the largest
	\(q\)-percolated components; inside it, the root blob is a uniform attachment
	tree carrying shortcut decorations with controlled moments.  Weighted Jordan
	centrality on this skeleton, followed by a deterministic comparison with
	ordinary graph Jordan centrality on the decorated component, yields a
	deterministic-size confidence set for the root.
	
	We treat the random recursive tree here because the required probabilistic
	inputs are explicit.  The Yule embedding with neutral mutations gives the
	Yule--Simon empirical blob law, the largest blobs have order \(n^p\), the
	susceptibility is \((1-2p)^{-1}\), and the shortcut subcritical condition is
	\(p<1/(\gl+2)\).  Conditional on its size, the root blob is a uniform attachment
	tree, so weighted Jordan centrality can be analyzed directly and then
	transferred to graph Jordan centrality in the decorated component.  In ongoing
	work, we extend this program to preferential-attachment tree substrates with a
	general degree-based attachment function \(f\).  In that setting, the Yule
	inputs are replaced by the corresponding Crump--Mode--Jagers inputs:
	Malthusian exponents for blob growth, exponential-age mixtures for the
	empirical blob law, susceptibility constants for the rank-one shortcut graph,
	and a weighted Jordan theorem for the thinned genealogy.

	\paragraph{\bf Organization of the paper.}
	Section~\ref{sec:model-notation} defines the model and the auxiliary
	root-finding algorithm.  Section~\ref{sec:results} states the two main
	results.  Section~\ref{sec:discussion} discusses related work, limitations,
	and possible extensions to broader attachment substrates.  The proofs are
	given in Section~\ref{sec:proofs}.
	
	\section{Model and notation}
	\label{sec:model-notation}
	
	Let \(T_n\) be the random recursive tree on \([n]\), rooted at \(1\).  Recall
	that a recursive tree on \([n]\) is a rooted labeled tree in which labels
	increase along every path away from the root.  Equivalently, \(T_n\) can be
	constructed dynamically by starting from vertex \(1\) and, for each
	\(m=2,\ldots,n\), attaching the new vertex \(m\) to a uniformly chosen vertex
	of \([m-1]\); the resulting distribution is uniform over recursive trees on
	\([n]\).  Random recursive trees are a standard uniform-attachment model for
	recursive structures and growing networks; see
	\cite{SmytheMahmoud1995Survey,Drmota2009RandomTrees}.  In network archaeology
	this is a useful null substrate because new vertices have no degree, fitness,
	or spatial preference.  Thus root recovery in the random recursive tree is
	already somewhat surprising: even when every newcomer attaches uniformly at
	random and the final labels are erased, the topology still retains enough
	age-asymmetry to localize the first vertex with deterministic-size confidence
	sets~\cite{BubeckDevroyeLugosi2017FindingAdam}.  The question here is whether
	that signal persists after adding shortcut edges.

	\definecolor{treecol}{RGB}{15,110,86}     
	\definecolor{shortcol}{RGB}{216,90,48}    
	\definecolor{rootcol}{RGB}{15,110,86}
	\definecolor{blobfill}{RGB}{225,245,238}
	\definecolor{compcol}{RGB}{83,74,183}
	
	\newcommand{\vertices}{%
		\coordinate (v1)  at (3.4,3.3);
		\coordinate (v2)  at (2.0,2.6);
		\coordinate (v3)  at (4.8,2.6);
		\coordinate (v4)  at (1.2,1.8);
		\coordinate (v5)  at (2.7,1.8);
		\coordinate (v6)  at (4.2,1.8);
		\coordinate (v7)  at (5.5,1.8);
		\coordinate (v8)  at (0.8,1.0);
		\coordinate (v9)  at (1.8,1.0);
		\coordinate (v10) at (3.0,1.0);
		\coordinate (v11) at (4.7,1.0);
		\coordinate (v12) at (5.9,1.0);}
	\newcommand{\verticesB}{%
		\coordinate (v1)  at (3.4,3.3);
		\coordinate (v2)  at (2.0,2.6);
		\coordinate (v3)  at (4.8,2.6);
		\coordinate (v4)  at (1.2,1.8);
		\coordinate (v5)  at (2.7,1.8);
		\coordinate (v6)  at (4.2,1.8);
		\coordinate (v7)  at (5.9,1.9);
		\coordinate (v8)  at (5.35,0.55);
		\coordinate (v9)  at (1.8,1.0);
		\coordinate (v10) at (3.0,1.0);
		\coordinate (v11) at (4.55,1.0);
		\coordinate (v12) at (6.35,1.05);}
	\newcommand{\drawnodes}{%
		\foreach \i in {2,...,12}
		\node[circle,draw=black!70,fill=white,inner sep=1pt,minimum size=4.0mm,font=\scriptsize] (n\i) at (v\i) {\i};
		\node[circle,draw=rootcol,very thick,fill=rootcol!15,inner sep=1pt,minimum size=4.4mm,font=\scriptsize\bfseries] (n1) at (v1) {1};}
	\newcommand{\treeedges}[1]{\foreach \a/\b in {1/2,1/3,2/4,2/5,3/6,3/7,4/8,4/9,5/10,6/11,7/12} \draw[#1] (v\a) -- (v\b);}
	\newcommand{\shortcutedges}[1]{\foreach \a/\b in {5/6,9/10,8/11,2/12} \draw[#1] (v\a) -- (v\b);}
	\newcommand{\alltree}[1]{\treeedges{#1}}
	\newcommand{\allshort}[1]{\shortcutedges{#1}}
	\newcommand{\retainedtree}[1]{\foreach \a/\b in {1/2,2/5,5/10,3/6,6/11,4/9,7/12} \draw[#1] (v\a) -- (v\b);}
	\newcommand{\retainedshort}[1]{\foreach \a/\b in {9/10,8/11} \draw[#1] (v\a) -- (v\b);}
	\newcommand{\deletededges}[1]{\foreach \a/\b in {1/3,2/4,3/7,4/8,5/6,2/12} \draw[#1] (v\a) -- (v\b);}
	\newcommand{\blobhulls}[1]{%
		\begin{scope}[on background layer]
			\node[ellipse,fill=blobfill,draw=rootcol,thick,inner sep=-2pt,rotate=51,fit=(v1)(v2)(v5)(v10)] {};
			\node[ellipse,fill=black!6,draw=black!45,inner sep=-3pt,rotate=62,fit=(v3)(v6)(v11)] {};
			\node[ellipse,fill=black!6,draw=black!45,inner sep=-1pt,rotate=48,fit=(v4)(v9)] {};
			\node[ellipse,fill=black!6,draw=black!45,inner sep=-2pt,rotate=55,fit=(v7)(v12)] {};
			\node[ellipse,fill=black!6,draw=black!45,inner sep=1.6pt,fit=(v8)] {};
			#1
	\end{scope}}

	\begin{figure}[b]
		\centering
		\resizebox{0.92\textwidth}{!}{%
			\begin{tikzpicture}[scale=1.0]
				\begin{scope}[xshift=0cm]
					\vertices
					\treeedges{treecol,very thick}
					\drawnodes
					\node[font=\small] at (3.35,0.25) {latent recursive tree $T_n$};
				\end{scope}
				\begin{scope}[xshift=7.3cm]
					\vertices
					\treeedges{treecol,very thick}
					\shortcutedges{shortcol,very thick,dashed}
					\drawnodes
					\node[font=\small] at (3.35,0.25) {observed graph $G_n=T_n\cup H_n$};
				\end{scope}
				\node[font=\scriptsize,align=left] at (14.6,2.0)
				{\textcolor{treecol}{\rule[0.5ex]{5mm}{1.1pt}} tree edge\\[1pt]
					\textcolor{shortcol}{\rule[0.5ex]{1.4mm}{1.1pt}\hspace{0.7mm}\rule[0.5ex]{1.4mm}{1.1pt}\hspace{0.7mm}\rule[0.5ex]{1.4mm}{1.1pt}} shortcut};
			\end{tikzpicture}%
		}
		\caption{The model. Left: the latent random recursive tree $T_n$, rooted at
			vertex $1$. Right: the observed graph $G_n=T_n\cup H_n$, where
			$H_n\sim\mathrm{ER}(n,\lambda/n)$ is the independent shortcut layer (dashed).
			In the unlabeled snapshot $G_n^\circ$, tree edges and shortcut edges are
			indistinguishable; the colors and vertex labels shown here are not
			available to the observer.}
		\label{fig:model-layers}
	\end{figure}
	
	Let \(H_n\) be an independent Erd\H{o}s--R\'enyi graph on \([n]\) with edge
	probability \(\gl/n\). Write
	\[
	G_n=T_n\cup H_n .
	\]
	Let \(G_n^\circ\) denote the unlabeled isomorphism class of \(G_n\).  The
	network archaeology, or root-reconstruction, problem is to identify the
	original vertex \(1\) from \(G_n^\circ\) alone: the algorithm is not given the
	labels \([n]\), the birth times, the decomposition \(G_n=T_n\cup H_n\), or a
	mark distinguishing the root.  Figure~\ref{fig:model-layers} illustrates the
	latent two-layer construction.

	Following the confidence-set formulation of
	root finding in growing random trees
	\cite{BubeckDevroyeLugosi2017FindingAdam,BanerjeeBhamidi2022JordanRoot}, a
	budget-\(M\) reconstruction rule is an isomorphism-invariant, possibly
	randomized, map \(\mathcal A_M\) that returns at most \(M\) vertices of the
	unlabeled input graph.  For an error tolerance \(\eps\in(0,1)\), such a rule
	succeeds if
	\[
	\liminf_{n\to\infty}
	\PP\{1\in \mathcal A_M(G_n^\circ)\}
	\ge 1-\eps .
	\]
	The point is that \(M\) may depend on \(\eps\) and on the model parameters,
	but not on \(n\).  The theorem below constructs such a deterministic-size
	confidence set by adding auxiliary percolation randomness after the unlabeled
	graph has been observed.  To simplify notation in the proofs, particularly in
	Section~\ref{sec:proof-network-archaeology}, we write \(G_n\) for the unlabeled
	observation \(G_n^\circ\) whenever only its topology is relevant.
	
	\subsection{Percolation}
	
	Bernoulli bond percolation underlies both analyses.  We use \(p\) for
	structural component analysis and \(q\) for the auxiliary percolation in the
	root-finding algorithm.  For the former, independently retain each edge of
	\(G_n\) with probability \(p\in(0,1)\).  Equivalently, retain the tree and
	shortcut edges independently with probability \(p\); the retained shortcut
	layer is an independent \(\ErdosRenyi(n,p\gl/n)\) graph.
	Figure~\ref{fig:percolation-pipeline} illustrates the corresponding
	\(q\)-percolation used by the root-finding algorithm.

	\begin{figure}[t]
		\centering
		\resizebox{\textwidth}{!}{%
			\begin{tikzpicture}[scale=0.92]
				\begin{scope}[xshift=0cm,yshift=5.9cm]
					\vertices
					\alltree{treecol,very thick}
					\allshort{shortcol,very thick,dashed}
					\drawnodes
					\node[font=\small] at (3.35,0.25) {observed graph $G_n=T_n\cup H_n$};
				\end{scope}
				\draw[-{Stealth[length=3mm]},very thick,black!70]
				(6.9,8.0) -- (9.2,8.0)
				node[midway,above,font=\scriptsize,align=center]{percolation:\\ keep each edge\\ w.p.\ $q$};
				\begin{scope}[xshift=9.6cm,yshift=5.9cm]
					\verticesB
					\deletededges{black!35,dotted,thick}
					\retainedtree{treecol,very thick}
					\retainedshort{shortcol,very thick,dashed}
					\drawnodes
					\node[font=\small] at (3.35,0.25) {percolated graph $G_n^{(q)}$};
				\end{scope}
				\begin{scope}[xshift=0cm,yshift=0cm]
					\verticesB
					\blobhulls{}
					\retainedtree{treecol,very thick}
					\drawnodes
					\node[font=\scriptsize,rootcol] at (1.15,3.6) {$\mathcal{B}_n^{\mathrm{root}}$};
					\node[font=\scriptsize] at (2.45,0.5) {$W_{n,1}{=}4$};
					\node[font=\scriptsize] at (5.6,2.75) {$W_{n,2}{=}3$};
					\node[font=\scriptsize] at (0.5,1.05) {$W_{n,3}{=}2$};
					\node[font=\scriptsize] at (7.05,1.9) {$W_{n,4}{=}2$};
					\node[font=\scriptsize] at (5.35,-0.1) {$W_{n,5}{=}1$};
					\node[font=\small] at (3.35,-0.6) {backbone blobs (retained tree edges only)};
				\end{scope}
				\begin{scope}[xshift=9.6cm,yshift=0cm]
					\begin{scope}[on background layer]
						\verticesB
						\node[draw=compcol,thick,dash dot,rounded corners=7pt,inner sep=8pt,fit=(v1)(v2)(v4)(v5)(v9)(v10)] {};
						\node[draw=compcol!60,thick,dash dot,rounded corners=7pt,inner sep=7pt,fit=(v3)(v6)(v8)(v11)] {};
						\node[draw=compcol!60,thick,dash dot,rounded corners=7pt,inner sep=6pt,fit=(v7)(v12)] {};
					\end{scope}
					\verticesB
					\blobhulls{}
					\retainedtree{treecol,very thick}
					\retainedshort{shortcol,very thick,dashed}
					\drawnodes
					\node[font=\scriptsize,compcol] at (1.15,3.85) {$\mathcal{C}_n^{\mathrm{root}}$};
					\node[font=\small] at (3.35,-0.6) {full components (blobs $+$ retained shortcuts)};
				\end{scope}
			\end{tikzpicture}%
		}
		\caption{From the observed graph to the subcritical blob picture. Top: the
			observed graph $G_n=T_n\cup H_n$ and its $q$-percolation $G_n^{(q)}$, in
			which each edge is retained independently with probability $q$ (deleted
			edges dotted). Bottom left: the backbone blobs, the connected clusters of
			retained \emph{tree} edges, ranked by mass
			$W_{n,1}\ge W_{n,2}\ge\cdots$ with ties broken by smallest vertex label;
			the blob containing vertex $1$ is the root blob
			$\mathcal{B}_n^{\mathrm{root}}$. Bottom right: the full components of
			$G_n^{(q)}$, obtained by gluing blobs along retained shortcut edges; the
			component containing the root blob is $\mathcal{C}_n^{\mathrm{root}}$.
			The displayed labels and edge types are for illustration only.}
		\label{fig:percolation-pipeline}
	\end{figure}

	Let
	\[
	\cB_{n,1},\cB_{n,2},\ldots
	\]
	be the connected components of the retained recursive-tree backbone, ranked in
	decreasing order of size, with ties broken by increasing smallest label. Put
	\[
	W_{n,i}=|\cB_{n,i}|,
	\qquad
	\cB_n(v)=\text{the retained tree blob containing }v.
	\]
	The full percolated components of \(G_n\) are obtained by adding the retained
	shortcut edges between these blobs.
	
	Conditional on the retained tree blobs and their sizes, the retained shortcut
	layer induces a random graph on the blob set: two distinct blobs \(i,j\) are
	joined when at least one retained shortcut edge connects their vertices. These
	events are independent for different blob pairs, and
	\[
	\PP(i\leftrightarrow j\mid (W_{n,a})_a)
	=
	1-\left(1-\frac{p\gl}{n}\right)^{W_{n,i}W_{n,j}}
	=
	\frac{p\gl W_{n,i}W_{n,j}}{n}
	+o\left(\frac{W_{n,i}W_{n,j}}{n}\right).
	\]
	Thus the contracted shortcut graph is a rank-one inhomogeneous random graph
	with vertex weights \(W_{n,i}\), in the sense of the general theory of
	inhomogeneous random graphs; see
	\cite{BollobasJansonRiordan2007IRG,VanDerHofstad2024RGCNII}.  Its local
	component exploration is governed, to first order, by the expected total blob
	mass reached from a size-biased blob, namely
	\[
	p\gl\,\frac1n\sum_i W_{n,i}^2.
	\]
	By Corollary~\ref{cor:backbone-susceptibility}, this quantity converges in
	probability to \(p\gl/(1-2p)\).  Define the shortcut subcriticality parameter
	\[
	\rho(p,\gl)
	:=
	\frac{p\gl}{1-2p}.
	\]
	The regime considered below is
	\[
	0<p<p_c(\gl):=\frac{1}{\gl+2},
	\qquad\text{equivalently}\qquad
	\rho(p,\gl)<1.
	\]
	In particular, \(p<1/2\).  The root-finding result uses a second, auxiliary
	percolation parameter.  After observing \(G_n^\circ\), independently retain
	each observed edge with probability
	\begin{equation}
		\label{eq:arch-q-range}
		0<q<\min\left\{\frac1{\gl+2},\frac13\right\}.
	\end{equation}
	Figure~\ref{fig:subcritical-regime-map} records the structural and
	root-finding ranges as functions of the shortcut intensity \(\gl\).
	
	\begin{figure}[tbp]
		\centering
		\begin{tikzpicture}
			\begin{axis}[
				figaxis,
				width=11.4cm, height=6.6cm,
				axis lines=left,
				xlabel={shortcut intensity \(\gl\)},
				ylabel={percolation parameter \(p,\,q\)},
				xmin=0, xmax=8, ymin=0, ymax=0.55,
				enlargelimits=false, clip=false,
				xtick={0,1,2,3,4,5,6,7,8},
				ytick={0,0.1,0.2,0.3333,0.5},
				yticklabels={\(0\),\(0.1\),\(0.2\),\(\tfrac13\),\(\tfrac12\)},
				xmajorgrids, ymajorgrids,
				]
				\addplot[name path=curve, figteal!85!black, very thick,
				domain=0:8, samples=160] {1/(x+2)};
				\addplot[name path=qline, figorange!85!black, thick, densely dashed,
				domain=0:8, samples=2] {1/3};
				\path[name path=top]    (axis cs:0,0.55) -- (axis cs:8,0.55);
				\path[name path=bottom] (axis cs:0,0) -- (axis cs:8,0);
				
				\addplot[figred!10] fill between[of=curve and top];
				\addplot[figorange!12] fill between[of=qline and curve,
				soft clip={domain=0:1}];
				\addplot[figteal!12] fill between[of=bottom and qline,
				soft clip={domain=0:1}];
				\addplot[figteal!12] fill between[of=bottom and curve,
				soft clip={domain=1:8}];
				
				\addplot[figteal!85!black, very thick, domain=0:8, samples=160] {1/(x+2)};
				\addplot[figorange!85!black, thick, densely dashed, domain=0:8, samples=2] {1/3};
				\draw[gray!55, densely dashed] (axis cs:1,0) -- (axis cs:1,0.3333);
				
				\addplot[only marks, mark=*, mark size=2pt, figteal!65!black]
				coordinates {(5,0.10)};
				\node[anchor=west, font=\footnotesize, figteal!65!black]
				at (axis cs:5.15,0.10) {\((\gl,q)\) admissible};
				
				\node[figteal!75!black, font=\footnotesize, fill=white, fill opacity=0.85,
				text opacity=1, inner sep=1.5pt, rounded corners=1pt]
				at (axis cs:3.05,0.235)
				{\(p_c(\gl)=\dfrac{1}{\gl+2}\;\;(\rho=1)\)};
				\node[figorange!82!black, font=\footnotesize, anchor=south east]
				at (axis cs:7.9,0.3433) {\(q<1/3\)};
				\node[align=center, figteal!75!black, font=\footnotesize]
				at (axis cs:2.7,0.055) {admissible\\root-finding range};
				\node[align=center, figred!75!black, font=\footnotesize]
				at (axis cs:5.6,0.46) {subcritical argument\\does not apply};
				\node[align=center, figorange!82!black, font=\scriptsize, anchor=south]
				at (axis cs:0.42,0.37) {structural\\only};
			\end{axis}
		\end{tikzpicture}
		\caption{Structural and root-finding parameter ranges for the random recursive
			tree substrate.  The teal curve \(p_c(\gl)=1/(\gl+2)\) is the
			rank-one critical curve \(\rho(p,\gl)=1\): below it the shortcut layer is
			subcritical.  Root finding uses the smaller auxiliary range
			\(q<\min\{p_c(\gl),1/3\}\) (teal); the orange sliver
			\(1/3\le q<p_c(\gl)\) is structurally subcritical but outside the
			root-finding range used here, and above the curve (red) the subcritical
			argument does not apply.}
		\label{fig:subcritical-regime-map}
	\end{figure}
	
	Let
	\[
	\cC_{n,1}^{(q)},\cC_{n,2}^{(q)},\ldots
	\]
	be the connected components of this retained graph, ranked in decreasing
	order of size with deterministic tie-breaking.  For a graph \(\cG\), let
	\(\Comp(\cG)\) denote its set of connected components.  For a finite connected
	graph \(C\) and a vertex \(v\in C\), define the graph-Jordan score
	\[
	\psi_C(v)
	:=
	\max\{|D|:D\in\Comp(C\setminus\{v\})\}
	\]
	For \(L\ge1\), let \(\TopJ_L(C)\) consist of the
	\(\min\{L,|C|\}\) vertices of \(C\) with smallest \(\psi_C\)-values, again with
	deterministic isomorphism-invariant tie-breaking.  The root-finding output is
	\begin{equation}
		\label{eq:arch-algorithm}
		\cH_{K,L}(G_n^\circ)
		:=
		\bigcup_{i=1}^K \TopJ_L(\cC_{n,i}^{(q)}).
	\end{equation}
	This set has size at most \(KL\) and is a function of the unlabeled topology
	of the observed graph and of the auxiliary percolation randomness.
	
	\begin{rem}
		The restriction \(q<1/3\) is not expected to be sharp.  It is imposed here to
		keep the proof in a finite-second-moment regime for the decoration variables
		defined below.  Equivalently, it is the condition under which the size-biased
		Yule--Simon blob law has a finite second moment; this enters in
		Lemma~\ref{lem:external-third-susceptibility}.  The subcritical component
		comparison itself only needs \(q<1/(\gl+2)\).
	\end{rem}
	
	\section{Results}
	\label{sec:results}
	We first state the subcritical component picture and then its root-finding
	consequence.
	
	\begin{thm}[Subcritical blob picture]
		\label{thm:subcritical-blob-picture}
		Fix \(\gl>0\) and \(0<p<p_c(\gl)\). Then the following hold.
		\begin{enumeratea}
			\item For every \(\eps>0\), there is a finite \(K=K(\eps)\) such that
			\[
			\limsup_{n\to\infty}
			\PP\left(
			\rank(\cB_n(1))>K
			\right)
			<\eps .
			\]
			Here \(\rank(\cB_n(1))\) is the position of the root blob in the
			decreasing size order fixed in Section~\ref{sec:model-notation}.
			\item The normalized counts of retained recursive-tree blobs have a
			power-law tail. More precisely, if \(N_{n,k}\) denotes the number of blobs
			of size \(k\), then, for every fixed \(k\ge 1\),
			\[
			\frac{N_{n,k}}{n}
			\probc
			q_k
			:=
			\frac{1-p}{p}\,B\left(k,1+\frac1p\right),
			\]
			where \(B(\cdot,\cdot)\) is the Beta function. Thus,
			\[
			\sum_{\ell\ge k} q_\ell
			\sim
			(1-p)\Gamma\left(1+\frac1p\right)k^{-1/p}.
			\]
			The size-biased blob seen from a uniformly chosen vertex has probabilities
			\[
			\PP(W^\star=k)=kq_k,
			\]
			and, for \(p<1/2\),
			\[
			\EE W^\star=\sum_{k\ge1}k^2q_k=\frac{1}{1-2p}.
			\]
			\item For each fixed \(K\), with probability tending to one, the \(K\) largest
			full percolation components are those containing the \(K\) largest
			backbone blobs, with their order determined by the tie rule above. Moreover,
			for every fixed \(i\),
			\[
			\left|
			|\cC_{n,i}^{\full}|
			-
			\frac{W_{n,i}}{1-\rho(p,\gl)}
			\right|
			=
			o_{\PP}(n^p),
			\]
			where \(\cC_{n,i}^{\full}\) is the full percolation component containing
			\(\cB_{n,i}\).
		\end{enumeratea}
	\end{thm}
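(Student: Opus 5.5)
The plan is to realize percolation on $T_n$ through the continuous-time Yule embedding with neutral mutations, derive (a) and (b) from it, and then prove (c) by comparing shortcut explorations from the leading blobs with a subcritical branching process. In the Yule process each individual reproduces at rate one. Each child edge is retained with probability $p$; a deleted edge starts a new blob, which we view as a mutation. A fixed blob then grows as a Yule process of rate $p$ and produces new blobs at rate $(1-p)$ times its size.

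\emph{Part (b).} Stop the process at the time $\tau_n$ when the population reaches $n$. A blob born at a uniformly chosen birth time has age distributed approximately as $\mathrm{Exp}(1)$, since the population grows at exponential rate one. Conditional on age $a$, a rate-$p$ Yule process has a $\mathrm{Geom}(e^{-pa})$ size. Mixing over the age gives the Yule--Simon law
\[
q_k\propto\int_0^\infty e^{-a}e^{-pa}(1-e^{-pa})^{k-1}\,da=\tfrac1p B\!\left(k,1+\tfrac1p\right).
\]
The normalization is fixed by the mutation count: the number of blobs is $1+$ (the number of deleted edges) $\sim (1-p)n$, and $\sum_k \tfrac1p B(k,1+1/p)=1$, which yields the prefactor $(1-p)/p$. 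To get convergence in probability of $N_{n,k}/n$, compute first and second moments, or alternatively use the martingale/urn concentration from the standard Yule--Simon analysis. The tail asymptotics follow from $B(k,b)\sim\Gamma(b)k^{-b}$. For the moment $\EE W^\star=\sum k^2q_k$, differentiate the generating function of the Beta mixture, or use the equivalent form $\EE[\text{size}^2]$ of a $\mathrm{Geom}(e^{-pa})$ variable: its second moment is $2e^{2pa}-e^{pa}$, and integrating against $(1-p)e^{-a}$ gives $1/(1-2p)$ when $p<1/2$.

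\emph{Part (a).} Here the embedding gives almost-sure limits. The root blob satisfies $e^{-p t}|\cB(1)|_t\to \xi_1>0$. Likewise each of the first $m$ blobs created satisfies $e^{-p(t-\sigma_j)}|\cB_j|\to\xi_j$, where $\sigma_j$ is its birth time. Since $e^{\tau_n}\asymp n$, all these blobs have size of order $n^p$. The task is to show that blobs born after a large time $s$ are collectively small at scale $n^p$, meaning $\PP(\max_{\sigma_j>s}|\cB_j|>\delta n^p)\to0$ as $s\to\infty$ uniformly in $n$. To do this, bound $\EE\sum_{j:\sigma_j>s}(e^{-p\tau}|\cB_j|)^{r}$ for some $r\in(1,1/p)$; this sum is of order $e^{-(r p-1)s}$ up to constants, which is summable, so the contribution vanishes as $s\to\infty$. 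The root blob has a strictly positive limit, so its rank is tight; this is the same argument used for maximal blobs in random recursive trees.

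\emph{Part (c).} Work conditionally on the blob sizes. Fix $K$ and explore the full component of $\cB_{n,i}$ through the rank-one graph. A blob of weight $w$ has Poisson$(p\gl w\cdot(\text{mass})/n)$ shortcut neighbours. Each neighbour is hit size-biased and contributes offspring mass $W^\star$, and by Corollary~\ref{cor:backbone-susceptibility} this gives mean mass $\rho$ per unit of mass. Starting from the $\approx p\gl W_{n,i}$ shortcut edges leaving $\cB_{n,i}$, each edge seeds a subcritical cluster whose expected total mass is $\EE W^\star/(1-\rho)=1/((1-2p)(1-\rho))$. Summing, the attached decoration mass is $W_{n,i}\,p\gl\,\EE W^\star/(1-\rho)=W_{n,i}\rho/(1-\rho)$, and the total component size is $W_{n,i}(1+\rho/(1-\rho))=W_{n,i}/(1-\rho)$.

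The main obstacle is proving concentration at precision $o(n^p)$ in a heavy-tailed setting where $\EE (W^\star)^2$ may be infinite. I would truncate: decorations consisting only of blobs smaller than $n^{p-\eta}$ have finite truncated variance, and an LLN for the roughly $n^p$ i.i.d.-like subtrees gives relative error $o(1)$. Decorations that contain a blob of size at least $n^{p-\eta}$ are unlikely. By the tail in (b) there are only $O(n^{\eta/p\cdot\ldots})$ such blobs, and the chance that a cluster seeded from $\cB_{n,i}$ hits any of them is $O(n^{p}\cdot n^{p}\cdot\#/n)$, which is $o(1)$ for small $\eta$ since $p<1/2$. The same estimate shows that the leading blobs $\cB_{n,1},\dots,\cB_{n,K}$ are pairwise not connected. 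Finally, depletion effects are negligible: the explored mass is $O(n^p)=o(n)$, so the coupling with the branching process is exact up to $o(n^p)$.

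For the identification of the $K$ largest components, I would combine the limits from (a) — the limits $\xi_j$ are distinct and have continuous laws — with the bound that every component not containing a top-$K'$ blob has size at most $(\max_{j>K'}W_{n,j})/(1-\rho)+o(n^p)$. This bound follows from the same exploration argument, applied uniformly via a union bound over moderately large blobs.
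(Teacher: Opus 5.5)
\textbf{Verdict: genuine gap in part (c), plus a fixable error in part (a).}

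For (b), your age-mixing derivation of the Yule--Simon law is a workable alternative to the paper's martingale ODEs. So is your one-seed argument in (c): about $p\gl W_{n,i}$ nearly independent subcritical decoration trees plus a truncated law of large numbers, in place of the paper's generation-by-generation Chebyshev bounds.

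\textbf{The gap: identifying the top $K$ components.} This needs a bound showing that no component avoiding the leading blobs reaches size of order $n^p$. You assert it ``follows from the same exploration argument, applied uniformly via a union bound over moderately large blobs.'' It does not, for two reasons.
\begin{enumerate}
\item There are $\Theta(n)$ blobs below $n^{p-\eta}$. Components built entirely from them are not covered by a union bound over the moderately large blobs.
\item Your one-seed argument gives only ``relative error $o(1)$'', with no rate. So it cannot be summed even over the $n^{\eta/p+o(1)}$ moderately large seeds, let alone over $\Theta(n)$ seeds.
\end{enumerate}
What is missing is a tail bound for a seed's cluster mass that scales with the seed's own weight and decays fast enough to sum over all seeds.

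The paper's device is as follows. Root each component at its smallest-index (heaviest) blob $i$, and explore only through blobs of weight at most $x=W_{n,i}$. The exploration is then dominated by a subcritical marked branching process with marks capped at $x$. A generating-function induction gives $\EE e^{\theta T/x}\le e^{cW_{n,i}/x}$, hence $\PP(T>t)\le C_r(W_{n,i}/t)^r$ for every $r$. Summing over $i>L$ and using $n^{-pr}\sum_{i>L}W_{n,i}^r\to0$ in probability (first $n\to\infty$, then $L\to\infty$) for $r>1/p$ shows that all components avoiding the top $L$ blobs are $o(n^p)$. Only the finitely many seeds $K+1,\dots,K+L$ then need the precise one-seed asymptotics. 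Without some such Chernoff-type or high-moment tail estimate, your stronger claim that every such component has size at most $\max_{j>K'}W_{n,j}/(1-\rho)+o(n^p)$ is unsupported.

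\textbf{The error in (a): the exponent range is reversed.} With $r\in(1,1/p)$ you have $rp<1$. Then $\EE\sum_{j:\sigma_j>s}\bigl(e^{-pt}|\cB_j|\bigr)^r\asymp\int_s^t e^{(1-rp)u}\,du$, which grows like $n^{1-rp}$ at $t\approx\log n$, because the many small late blobs dominate. The decay $e^{-(rp-1)s}$ you write requires $rp>1$, that is $r>1/p$. This is the paper's choice, with the bound taken at the deterministic time $\log n+A$ and transferred to $\tau_n$ by monotonicity. The fix is easy, but as written this step fails.
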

	
	\begin{rem}
		This statement includes the fixed tie rule. With high probability, no two of
		the top \(K\) blobs are connected to one another by the shortcut layer, no
		lower-rank blob has a shortcut component large enough to overtake them, and
		the component containing \(\cB_{n,i}\) has the asymptotic size displayed
		above. The deterministic rule resolves exact ties.
	\end{rem}
	
	\begin{thm}[Root finding in the unlabeled shortcut model]
		\label{thm:network-archeology-root-finding}
		Fix \(\gl>0\) and choose \(q\) satisfying \eqref{eq:arch-q-range}. For every
		\(\eps>0\), there exist finite \(K=K(\eps,q,\gl)\) and
		\(L=L(\eps,q,\gl)\) such that the algorithm
		\eqref{eq:arch-algorithm} satisfies
		\begin{equation}
			\label{eq:network-archeology-root-finding}
			\liminf_{n\to\infty}
			\PP\{1\in \cH_{K,L}(G_n^\circ)\}
			\ge 1-\eps .
		\end{equation}
		Thus the original unlabeled observation \(G_n^\circ\), before the auxiliary
		percolation, admits a root confidence set whose size is deterministically
		bounded by \(K(\eps,q,\gl)L(\eps,q,\gl)\).
	\end{thm}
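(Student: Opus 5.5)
We prove Theorem~\ref{thm:network-archeology-root-finding} in two stages: first locate the root component among the top \(K\) percolated components, then show the root is among the top \(L\) graph-Jordan vertices inside that component. The percolated graph \(G_n^{(q)}\) has the same law as the labeled \(p\)-percolation of Section~\ref{sec:model-notation} with \(p=q\). The algorithm is isomorphism invariant, so we may analyze it on the labeled graph.

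\emph{Stage one.} By Theorem~\ref{thm:subcritical-blob-picture}(a) with \(p=q\), pick \(K_0\) so that \(\rank(\cB_n(1))\le K_0\) with probability at least \(1-\eps/3\). By Theorem~\ref{thm:subcritical-blob-picture}(c) with \(K=K_0\), with high probability the \(K_0\) largest full components are exactly those containing \(\cB_{n,1},\dots,\cB_{n,K_0}\), in that order. Setting \(K=K_0\), the root component \(\cC_n^{\Root}\) is then one of \(\cC_{n,1}^{(q)},\dots,\cC_{n,K}^{(q)}\). It remains to find \(L\) such that \(1\in\TopJ_L(\cC_n^{\Root})\) with probability at least \(1-2\eps/3\).

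\emph{Stage two.} We describe the root component as follows. The root blob \(\cB_n(1)\) has size \(W\asymp n^q\), since by (a) and the fact that the largest blobs have order \(n^q\) it is tight on that scale. Conditionally on its size, \(\cB_n(1)\) is a uniform attachment tree. Each vertex \(v\) of the root blob carries a decoration \(D_v\): the set of vertices reached from \(v\) through retained shortcuts into other blobs, without returning to the root blob. The decorations are built from the subcritical rank-one exploration, whose offspring mass is size-biased Yule--Simon with mean \(\rho<1\). We assign weight \(w_v=1+|D_v|\) to each vertex and show three facts.
\begin{itemize}
\item[(i)] With high probability no decoration tree reattaches to the root blob twice, so \(\cC_n^{\Root}\) is the root blob with disjoint trees hung at vertices; equivalently, there are no cycles through decorations at the relevant scale. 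The number of shortcut pairs closing such cycles has expected size \(O(\text{susceptibility}^2\cdot W^2/n)\to0\), since \(q<1/2\).
\item[(ii)] The weights \(w_v\) are approximately i.i.d.\ with finite mean \((1-\rho)^{-1}\) and finite second moment. This is exactly where \(q<1/3\) enters, through Lemma~\ref{lem:external-third-susceptibility}.
\item[(iii)] Given (i), for each \(v\) in the root blob, \(\psi_C(v)\) equals the maximum weighted mass of the components of the blob tree minus \(v\). For a decoration vertex \(u\), \(\psi_C(u)\) is at least the weighted mass of the blob minus the decoration subtree containing \(u\). Hence every decoration vertex has score at least \(\psi_C(r^\ast)\), where \(r^\ast\) is its attachment point, up to an additive \(\max_v w_v=o(W)\).
\end{itemize}
Ties and this error term are handled by the strict gap in the next step.

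We then prove a weighted Jordan theorem for uniform attachment trees with i.i.d.\ finite-variance vertex weights, following \cite{BubeckDevroyeLugosi2017FindingAdam,BanerjeeBhamidi2022JordanRoot}. Normalized weighted subtree masses converge, by the law of large numbers, to the same Dirichlet/P\'olya-urn limits as in the unweighted case. The root's weighted Jordan score is therefore \(W\) times a limit strictly below \(1\). Vertices of late birth rank, or deep in the tree, have score at least \(W(1-\delta)\) with probability tending to one uniformly over all but \(L\) vertices. Bounding the number of vertices with score at most the root's score reduces to estimating the masses of the first \(L\) subtrees via Pólya urns, with variance controlled by (ii). Combining this with (iii) gives \(1\in\TopJ_L(\cC_n^{\Root})\) with probability at least \(1-\eps/3\) for large \(L\). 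A union bound over the three failure events then finishes the proof.

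The main obstacle is the weighted Jordan step combined with the transfer in (iii): showing uniformly over the whole component, not just near the root, that only \(O(L)\) vertices beat the root. The decorations are only approximately independent of the tree and are heavy-tailed, so we would first condition on the blob-tree shape and use the second-moment bound from (ii) to obtain a uniform concentration of weighted subtree masses for all subtrees of size at least \(\delta W\).
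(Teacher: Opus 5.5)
Your proposal is correct and follows essentially the paper's route. You use Theorem~\ref{thm:subcritical-blob-picture}(a),(c) with \(p=q\) to place \(\cC_n^{\Root}\) among the top \(K\) components, then treat the root blob as a uniform-attachment skeleton with singly attached decorations of bounded conditional second moment (where \(q<1/3\) enters via Lemma~\ref{lem:external-third-susceptibility}), prove a weighted Jordan estimate with a macroscopic root gap, and transfer it to graph Jordan centrality with an \(o(|\cB_n^{\Root}|)\) error. Minor points: step (i) must also rule out retained shortcuts with both endpoints in the root blob (the same \(O(|\cB_n^{\Root}|^2/n)\) count handles this), and the decorations need only be singly attached, not trees; your ``approximately independent'' and uniformity concerns are settled in the paper by conditioning on a sigma-field \(\cF_n\) under which the raw weights are exactly independent of each other and of the uniform-attachment genealogy, together with the fringe bound \(\EE(N_i(m)/m)^2\le C'/i^2\) summed over \(i>L\).
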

	
	\section{Discussion and related work}
	\label{sec:discussion}
	
	\paragraph{\bf Network archaeology and root finding.}
	The phrase network archaeology was popularized in the algorithmic and
	computational-biology literature in \cite{NavlakhaKingsford2011NetworkArchaeology},
	where the problem was framed as recovering past states of a network from
	present-day interactions under a specified growth model.  A Bayesian
	formulation for reconstructing network histories, together with a recovery
	transition for generalized preferential attachment, was developed in
	\cite{YoungStOngeLaurenceMurphyHebertDufresneDesrosiers2019History}.  Inference
	for latent histories of shape-exchangeable growing trees was studied in
	\cite{CraneXu2020History}.  In probability, the most basic version is root
	finding: recover, up to a small confidence set, the first vertex of a growing
	random graph from its unlabeled final state.  The modern confidence-set
	formulation for uniform and preferential attachment trees was initiated in
	\cite{BubeckDevroyeLugosi2017FindingAdam}, where it was shown that the budget
	can be independent of \(n\).  This problem was placed in the broader setting
	of attachment-function-driven growing trees in
	\cite{BanerjeeBhamidi2022JordanRoot}.  That work establishes root-finding
	guarantees for Jordan centrality and persistence of the top Jordan centers.
	The present paper keeps the same confidence-set goal but changes the observed
	object from a tree to a sparse graph with cycles.
	
	\paragraph{\bf The tree-centered literature.}
	Most rigorous root-finding and seed-recovery results are for tree models.
	For random recursive and preferential attachment trees, one uses the fact that
	deleting a candidate vertex exposes descendant subtrees whose sizes can be
	compared.  This principle appears already in the Jordan-centrality algorithm
	of \cite{BubeckDevroyeLugosi2017FindingAdam}, in the persistence results of
	\cite{JogLoh2018}, and in the general attachment-function framework of
	\cite{BanerjeeBhamidi2022JordanRoot}.  Related tree problems include
	rumor-source detection and rumor centrality
	\cite{ShahZaman2011Rumors}, diffusion source confidence sets
	\cite{KhimLoh2016ConfidenceSets}, and inference of finite seeds in uniform
	attachment trees
	\cite{BubeckEldanMosselRacz2017UniformSeed,LugosiPereira2019Seed,DevroyeReddad2019Discovery}.
	Refined tail estimates used in root-finding analyses appear in
	\cite{JusticeShyamalkumar2019TailBound}.  More recent work sharpens
	confidence-set sizes, studies alternative algorithms, or estimates more of the
	latent history.  The preferential-attachment root-finding exponent was sharpened in
	\cite{ContatCurienLacroixLasalleRivoirard2023EveAdam} by using the early edge
	joining the first two vertices.  Optimal root-recovery order for uniform
	attachment and \(d\)-regular growing trees was obtained in
	\cite{AddarioBerryFontaineKhanfirLangevinTetu2024RootRecovery}, while a
	leaf-stripping algorithm gives another bounded-size root confidence set for
	uniform attachment trees in
	\cite{AddarioBerryBrandenbergerBriendBroutinLugosi2025LeafStripping}.
	Estimation of the arrival ordering in random recursive trees was studied in
	\cite{BriendGiraudLugosiSulem2024History}, and pointwise arrival-time errors
	for Jordan and iterated Jordan centralities are analyzed in
	\cite{BaumlerBriendJorritsma2026History}.  Root finding for random
	nearest-neighbor trees was analyzed in
	\cite{BrandenbergerMarcussenMosselSudan2024NearestNeighborRoot}, illustrating
	how geometric information changes the available statistics.  Five standard
	centrality measures for random recursive trees are compared in
	\cite{CollJosifovDevroyeLugosi2026Centrality}, with particular attention to
	the rank and persistence of the root.  These results give a well-developed
	theory, but their proofs are tied either to tree
	decompositions, degree martingales, or additional geometric information.
	
	\paragraph{\bf Beyond trees.}
	There are fewer rigorous network-archaeology results for non-tree graphs.  A
	model in which a latent preferential-attachment tree is observed together
	with homogeneous Erd\H{o}s--R\'enyi noise is studied in
	\cite{CraneXu2021LatentGrowth}, which develops a Bayesian inference procedure
	for early vertices and communities.  The closest result studies the same
	uniform-attachment-plus-Erd\H{o}s--R\'enyi model and proves bounded-size root
	confidence sets for \(G(n,\lambda_n/n)\) whenever
	\(\lambda_n=o(\log n)\), thereby including the fixed-\(\lambda\) regime
	considered here \cite{DevroyeLugosiMaitra2026NoisyTrees}.  Rather than introducing
	auxiliary percolation, it retains the subgraph spanned by vertices above a
	logarithmic degree threshold, shows that its largest component is a tree
	containing the root, and recovers enough exchangeability to apply Jordan
	centrality.  In particular, this method permits \(\lambda_n\to\infty\), a
	regime that at present does not appear accessible through our percolative
	coarse graining, since keeping the contracted shortcut graph subcritical would
	force the retention parameter to vanish and alter the blob scales used in our
	proof.  Thus the theorem in \cite{DevroyeLugosiMaitra2026NoisyTrees} covers a
	broader noise regime on the uniform-attachment substrate.  Root-estimation
	results for uniform random recursive DAGs and
	uniform Cooper--Frieze random networks were proved in
	\cite{BriendCalvilloLugosi2022RecursiveDAGs}, emphasizing that centrality
	methods designed for trees do not transfer directly once cycles are present.
	The distinction from the preceding methods is methodological.  Unlike posterior
	sampling \cite{CraneXu2021LatentGrowth} and high-degree filtering
	\cite{DevroyeLugosiMaitra2026NoisyTrees}, the algorithm here first applies
	independent subcritical percolation to the observed graph.  We then prove that
	the large components have a decorated-tree structure on which Jordan
	centrality is again usable.  The auxiliary percolation therefore supplies the
	coarse graining that transports tree-based confidence-set arguments to a
	cyclic graph.
	
	\paragraph{\bf Recursive-tree percolation and subcritical components.}
	The recursive-tree blob input used here is based on the Yule embedding with
	neutral mutations.  Related percolation-cluster asymptotics for random
	recursive trees are analyzed in
	\cite{Baur2014RRTPercolation,Bertoin2014RecursivePercolation}, while fixed-size
	cluster laws and largest-cluster behavior for site percolation are proved in
	\cite{GuYuan2024SitePercolationRRT}.  After contraction of the tree blobs, the
	shortcut layer is a rank-one inhomogeneous random graph.  The subcritical
	statement that large components are obtained by starting from large weights
	and attaching finite branching-process decorations parallels the theorem for
	subcritical power-law configuration models in
	\cite{Janson2008SubcriticalPowerLaw}.
	
	\paragraph{\bf General attachment substrates as future work.}
	The proof is organized so that the random-recursive-tree inputs are visible:
	the Yule--Simon blob law, the exponent \(p\), the susceptibility
	\((1-2p)^{-1}\), and the uniform-attachment Jordan estimate inside the root
	blob.  In ongoing work we consider attachment trees in which each new vertex
	chooses its parent with probability proportional to a general function \(f\)
	of the parent's degree.  The percolation-and-decoration decomposition remains
	the organizing idea, but the blob law, susceptibility, and root estimate
	require substrate-specific arguments.  Power-sublinear attachment
	\(f(k)=k^\alpha\), with \(\alpha\in(0,1)\), is a natural test class.  The
	threshold \(\alpha=1/2\) separates persistent and nonpersistent degree
	behavior: for \(\alpha>1/2\), persistent hubs can provide root information; see
	\cite{BanerjeeBhamidi2021Hubs} for the persistent-hub transition and
	\cite{BanerjeeHuang2023DegreeCentrality} for degree-centrality root confidence
	sets in growing random networks.  For \(\alpha\leq1/2\), the highest-degree
	vertices are not persistent, so the persistent-hub argument is unavailable.
	
	\paragraph{\bf Limitations and possible extensions.}
	The proof uses auxiliary randomness and requires
	\(q<\min\{1/(\gl+2),1/3\}\).  The first bound is the natural subcritical
	condition for the contracted shortcut graph, while the second is a convenient
	finite-second-moment assumption for the decoration variables.  The restriction
	\(q<1/3\) should not be interpreted as a conjectural threshold.  The proof
	derives the recursive-tree blob inputs from the Yule embedding and isolates
	the rank-one shortcut comparison as a conditional subcritical theorem, but it
	does not optimize constants or confidence-set sizes.  The auxiliary parameter
	\(q\) also introduces a natural algorithmic tradeoff: smaller \(q\) improves
	subcriticality and makes individual retained components smaller, but it
	fragments the observed graph into more components.  Understanding the optimal
	choice of \(q\), and how this choice affects the required budgets \(K\) and
	\(L\), remains open.  The next regimes are the critical window
	\(\rho(p,\gl)\approx1\), where leading blobs should interact nontrivially,
	and the supercritical regime, where the root may lie in a giant component.
	In both regimes, large components can no longer be treated as a single leading
	blob with finite decorations, so a different coarse-grained root statistic is
	required.
	
	\section{Proofs}
	\label{sec:proofs}
	
	We organize the proofs in two parts, corresponding to the two main theorems.
	Sections~\ref{sec:proof-rrt-blob-input} through
	\ref{sec:proof-subcritical-blob-picture} prove
	Theorem~\ref{thm:subcritical-blob-picture}: first the Yule embedding gives the
	recursive-tree blob asymptotics, then the shortcut layer is analyzed as a
	subcritical rank-one graph on blobs, and finally these inputs identify the
	leading full components.  Sections~\ref{sec:proof-network-archaeology} through
	\ref{sec:proof-network-archeology-root-finding} prove
	Theorem~\ref{thm:network-archeology-root-finding}: the root component is
	reduced to a decorated uniform-attachment skeleton, weighted Jordan centrality
	is controlled on that skeleton, and the resulting root estimate is transferred
	back to graph Jordan centrality in the observed percolated component.
	
	\subsection{Recursive-tree blob input}
	\label{sec:proof-rrt-blob-input}
	
	We use the standard continuous-time embedding of the random recursive tree.
	Let \(Y(t)\) be the Yule process started from one individual, in which every
	individual gives birth at rate one. The genealogical tree of \(Y(t)\), stopped
	when the population first reaches size \(n\), is distributed as \(T_n\). If
	\[
	\tau_n:=\inf\{t:Y(t)=n\},
	\]
	then
	\[
	\tau_n-\log n \weakc -\log E,
	\qquad E\sim\operatorname{Exp}(1),
	\]
	and in particular \(\tau_n=\log n+O_{\PP}(1)\).
	
	Bond percolation on the genealogy can be represented by mutations at births:
	with probability \(p\), a child keeps the type of its parent; with probability
	\(1-p\), it starts a new type. The retained tree blobs are precisely the type
	families. Thus the root blob is a Yule process with birth rate \(p\), observed
	at time \(\tau_n\), and the non-root blobs are the type families generated by
	mutations.
	
	We next prove the recursive-tree percolation input used below.  The argument
	is the standard Yule-process proof of these facts, included here to make the
	later root-finding and shortcut estimates independent of any black-box
	percolation theorem.  This Yule-mutation representation is closely related to
	the recursive-tree
	bond-percolation analyses of Bertoin~\cite{Bertoin2014RecursivePercolation}
	and Baur~\cite{Baur2014RRTPercolation}, which focus on supercritical regimes
	with \(p_n\to1\).  For fixed percolation parameter, the Yule--Simon
	cluster-size law and the \(n^p\) scale for the largest clusters are aligned
	with the fixed-\(p\) site-percolation results, and their coupling to bond
	percolation, in Gu--Yuan~\cite{GuYuan2024SitePercolationRRT}.
	
	We record three elementary consequences of the Yule embedding.  Enumerate the
	type families by their birth times, with family \(0\) denoting the root
	family.  Let \(\sigma_j\) be the birth time of family \(j\ge1\), and let
	\(Y_j^{(p)}(u)\) be a rate-\(p\) Yule process started from one individual and
	run for time \(u\).  By the branching property and the thinning of births
	into retained and mutant births, a family born at time \(\sigma_j\) evolves
	after its birth as a rate-\(p\) Yule process.  Thus, for the moment and limit
	computations below, the family sizes at time \(t\) are represented by
	\[
	Y_0^{(p)}(t),\qquad
	Y_j^{(p)}(t-\sigma_j)\ind\{\sigma_j\le t\},\quad j\ge1.
	\]
	Throughout this subsection we use the convention
	\[
	E=\lim_{t\to\infty}e^{-t}Y(t),
	\qquad E\sim\operatorname{Exp}(1).
	\]
	Thus \(e^{\tau_n}/n\to E^{-1}\) and
	\(\tau_n-\log n\to-\log E\) almost surely along the Yule construction.
	For every family born at finite time,
	\[
	e^{-pu}Y_j^{(p)}(u)\to \xi_j
	\qquad\text{a.s. as }u\to\infty,
	\]
	where the limits \((\xi_j)\) are independent exponential random variables
	with mean one.
	The early--late decomposition in this embedding is illustrated in
	Figure~\ref{fig:yule-mutation-clock-proof}.
	
	\begin{figure}[t]
		\centering
		\begin{tikzpicture}[
			x=0.85cm,y=0.75cm,
			every node/.style={font=\small},
			family/.style={circle,draw=#1!70!black,fill=#1!18,thick,minimum size=7pt},
			faint/.style={circle,draw=gray!60,fill=gray!12,thick,minimum size=6pt}
			]
			\fill[teal!7] (0,-0.55) rectangle (4.2,2.25);
			\draw[-{Latex[length=2.4mm]},thick] (0,0) -- (10.8,0);
			\node[below] at (0,0) {\(0\)};
			\node[below] at (4.2,0) {\(T\)};
			\node[below] at (9.9,0) {\(\tau_n\)};
			\draw[densely dashed] (4.2,-0.35) -- (4.2,2.15);
			\draw[densely dashed] (9.9,-0.35) -- (9.9,2.15);
			\node[teal!65!black] at (2.1,1.95) {early families};
			\node[gray!75!black] at (7.0,1.95) {late families};
			
			\draw[teal!70!black,very thick] (0,1.35) -- (9.9,1.35);
			\node[family=teal,label=left:{root}] at (0,1.35) {};
			\node[right] at (9.95,1.35) {\(E^{-p}\xi_0 n^p\)};
			
			\foreach \x/\h/\name in {1.0/0.75/\(\sigma_1\),2.0/1.05/\(\sigma_2\),3.25/0.55/\(\sigma_3\)} {
				\draw[orange!80!black,thick] (\x,0) -- (\x,\h);
				\node[family=orange,label=below:{\name}] at (\x,0) {};
				\draw[orange!80!black,thick] (\x,\h) -- (9.9,\h);
			}
			\foreach \x/\h in {5.3/0.75,6.2/1.05,7.4/0.55,8.4/0.9} {
				\draw[gray!65,thick] (\x,0) -- (\x,\h);
				\node[faint] at (\x,0) {};
				\draw[gray!65,thick,dashed] (\x,\h) -- (9.9,\h);
			}
			\node[orange!80!black,align=center,font=\scriptsize] at (2.3,-1.25)
			{finite list has explicit\\\(n^p\)-scale limits};
			\node[gray!75!black,align=center,font=\scriptsize] at (7.1,-1.25)
			{maximum becomes negligible\\after \(T\to\infty\)};
		\end{tikzpicture}
		\caption{The Yule mutation clock.  Families born before a fixed time \(T\)
			have explicit \(n^p\)-scale limits at \(\tau_n\), while the largest family
			born after \(T\) is negligible after taking \(T\to\infty\).}
		\label{fig:yule-mutation-clock-proof}
	\end{figure}
	
	\begin{lem}[Early mutation family limits]
		\label{lem:early-mutation-limits}
		For each fixed \(T<\infty\), the ranked sizes at time \(\tau_n\) of all
		families born no later than \(T\), divided by \(n^p\), converge jointly almost
		surely in the Yule construction to the ranked points of
		\[
		\left\{
		E^{-p}\xi_0
		\right\}
		\cup
		\left\{
		E^{-p}e^{-p\sigma_j}\xi_j:\sigma_j\le T
		\right\}.
		\]
		The limiting points are almost surely distinct.
	\end{lem}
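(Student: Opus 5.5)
The plan is to work pathwise in the Yule construction, on the almost sure event where all the limits displayed above exist. First I will show that only finitely many families are born by time \(T\). Mutant births occur at rate \((1-p)Y(t)\), and \(Y(T)<\infty\) almost surely, so the set \(J_T:=\{j:\sigma_j\le T\}\) is almost surely finite. It therefore suffices to prove the convergence one family at a time and then take the finite intersection of almost sure events.

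For a fixed family \(j\in J_T\), I will combine the two a.s. limits. Since \(\tau_n\to\infty\), we have \(\tau_n-\sigma_j\to\infty\), and hence
\[
e^{-p(\tau_n-\sigma_j)}Y_j^{(p)}(\tau_n-\sigma_j)\to\xi_j .
\]
We also have \(e^{p\tau_n}/n^p=(e^{\tau_n}/n)^p\to E^{-p}\). Multiplying gives
\[
n^{-p}Y_j^{(p)}(\tau_n-\sigma_j)=\frac{e^{p\tau_n}}{n^p}\,e^{-p\sigma_j}\,e^{-p(\tau_n-\sigma_j)}Y_j^{(p)}(\tau_n-\sigma_j)\to E^{-p}e^{-p\sigma_j}\xi_j .
\]
The same computation with \(\sigma_0=0\) treats the root family. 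One subtlety has to be checked here. The limit \(E=\lim e^{-t}Y(t)\) and the limits \(\xi_j\) are all defined on the same path, so no independence is needed for the convergence itself. The representation of family \(j\) after \(\sigma_j\) as a rate-\(p\) Yule process (the thinning/branching property quoted above) is what guarantees that \(\xi_j\) exists almost surely.

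The joint convergence of the finite vector indexed by \(\{0\}\cup J_T\) then follows immediately. Ranking is a continuous map on finite vectors: it is the sorting map, which is \(1\)-Lipschitz in sup norm on \(\mathbb R^m\). Since the index set is fixed once the path is fixed, the ranked normalized sizes converge to the ranked limit points.

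The main obstacle is distinctness, because the \(\xi_j\) are not independent of \(E\) or of the \(\sigma_j\). I would condition on \(\mathcal F_T\), the history up to time \(T\); this fixes \(J_T\), the \(\sigma_j\), and \(Y(T)\). Conditionally on \(\mathcal F_T\), the future evolutions of the \(Y(T)\) individuals alive at \(T\) are independent Yule processes, split by type. Each family limit \(\xi_j\) is a sum, weighted by \(e^{-p(T-\sigma_j)}\)-type factors, of independent \(\mathrm{Gamma}\)-type martingale limits over the family's members at time \(T\). Each \(\xi_j\) therefore has a conditionally absolutely continuous law, and the \(\xi_j\) for distinct families are conditionally independent, since they depend on disjoint sets of individuals. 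The common factor \(E^{-p}\) does not affect whether two points coincide: \(E^{-p}e^{-p\sigma_i}\xi_i=E^{-p}e^{-p\sigma_j}\xi_j\) iff \(e^{-p\sigma_i}\xi_i=e^{-p\sigma_j}\xi_j\). Two conditionally independent random variables with densities coincide with probability zero, so the limits are almost surely distinct, and a union over the finitely many pairs finishes the argument.

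The delicate point I expect is justifying absolute continuity and conditional independence. It may be cleaner to avoid explicit densities and instead condition additionally on all families except one, \(i\). Then \(\xi_i\) is a positive combination of independent \(\operatorname{Exp}\)-type limits of the Yule processes started by its members at \(T\), and hence has no atoms. This suffices for \(\PP(e^{-p\sigma_i}\xi_i=c)=0\) for any \(\mathcal F_T\)-and-other-family measurable \(c\).
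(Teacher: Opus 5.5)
Your proposal is correct and follows the same route as the paper. The shared steps are finitely many families born by time $T$, the factorization of $n^{-p}Y_j^{(p)}(\tau_n-\sigma_j)$ into $(e^{\tau_n}/n)^p$, $e^{-p\sigma_j}$ and the family martingale, and continuity of sorting. Your tie-breaking step is a more careful version of the paper's one-line appeal to continuity conditional on the mutation times. By conditioning on $\mathcal F_T$, you write $e^{-p\sigma_j}\xi_j=e^{-pT}\Gamma_j$, where each $\Gamma_j$ is a sum of conditionally independent $\operatorname{Exp}(1)$ limits over the family's members at time $T$, so the conditional independence needed to exclude ties (which continuity alone does not give) becomes explicit.
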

	
	\begin{proof}
		There are almost surely finitely many mutation births before time \(T\).  For
		each such family, the martingale convergence above and
		\(\tau_n-\log n\to-\log E\) give
		\[
		n^{-p}Y_j^{(p)}(\tau_n-\sigma_j)
		=
		\left(\frac{e^{\tau_n}}{n}\right)^p
		e^{-p\sigma_j}
		e^{-p(\tau_n-\sigma_j)}
		Y_j^{(p)}(\tau_n-\sigma_j)
		\to
		E^{-p}e^{-p\sigma_j}\xi_j .
		\]
		For the root family the same display holds with \(\sigma_0=0\).  Since the
		limits are continuous random variables conditional on the mutation times and
		there are finitely many of them, ties occur with probability zero.
	\end{proof}
	
	\begin{lem}[Late mutation families are negligible]
		\label{lem:late-mutation-negligible}
		For every \(\eps>0\),
		\[
		\lim_{T\to\infty}\limsup_{n\to\infty}
		\PP\left(
		\max_{j:\,T<\sigma_j\le \tau_n}
		Y_j^{(p)}(\tau_n-\sigma_j)>\eps n^p
		\right)=0.
		\]
	\end{lem}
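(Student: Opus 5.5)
The plan is a first-moment (union) bound over late mutation families, after replacing the random observation time \(\tau_n\) by a deterministic one.

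\emph{Step 1: remove the randomness of \(\tau_n\).} Fix \(\delta>0\) and, using \(\tau_n-\log n\weakc-\log E\), choose \(A<\infty\) with \(\limsup_n\PP(\tau_n>\log n+A)<\delta\). Put \(t_n:=\log n+A\). Every family size is nondecreasing in time. Hence on \(\{\tau_n\le t_n\}\),
\[
\max_{j:\,T<\sigma_j\le\tau_n}Y_j^{(p)}(\tau_n-\sigma_j)\le\max_{j:\,T<\sigma_j\le t_n}Y_j^{(p)}(t_n-\sigma_j).
\]
It therefore suffices to show that the expected number \(M_{n,T}\) of families with \(T<\sigma_j\le t_n\) and \(Y_j^{(p)}(t_n-\sigma_j)>\eps n^p\) satisfies \(\lim_{T\to\infty}\sup_n\EE M_{n,T}=0\). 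Markov's inequality then finishes the proof, since \(\delta\) is arbitrary.

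\emph{Step 2: compensator computation.} Mutation births occur at rate \((1-p)Y(s)\). By the branching property, a family born at time \(s\) evolves afterwards as an independent rate-\(p\) Yule process, independent of \(\mathcal F_s\). The compensator (Campbell/Mecke) formula for the marked point process of mutation births then gives
\[
\EE M_{n,T}=\int_T^{t_n}(1-p)\,\EE Y(s)\,\PP\bigl(Y^{(p)}(t_n-s)>\eps n^p\bigr)\,ds .
\]
Here \(\EE Y(s)=e^s\), and \(Y^{(p)}(u)\) is geometric with success parameter \(e^{-pu}\). Consequently
\[
\PP(Y^{(p)}(u)>x)=(1-e^{-pu})^{\lfloor x\rfloor}\le e\cdot\exp(-xe^{-pu}).
\]
With \(u=t_n-s\) and \(x=\eps n^p\) we get \(xe^{-pu}=\eps e^{-pA}e^{ps}=:c\,e^{ps}\). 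Therefore
\[
\EE M_{n,T}\le e\int_T^\infty e^{s}\exp(-c\,e^{ps})\,ds ,
\]
which is finite, independent of \(n\), and tends to \(0\) as \(T\to\infty\) because the integrand decays doubly exponentially.

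\emph{Step 3: conclude.} Combining the two steps,
\[
\limsup_n\PP(\max>\eps n^p)\le\delta+\sup_n\EE M_{n,T}.
\]
Letting \(T\to\infty\) and then \(\delta\downarrow0\) gives the claim.

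The main obstacle is Step 1/2: \(\tau_n\) depends on the whole process, including the late families themselves, so one cannot directly apply independence at time \(\tau_n\). The device that avoids this is monotonicity in time together with a deterministic horizon \(t_n\), after which the compensator formula applies cleanly. One should check that conditioning on the birth time does not bias the subsequent family growth; this is exactly the strong branching property at mutation epochs.
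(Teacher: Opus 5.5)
Your proof is correct and follows essentially the same route as the paper. Both arguments use a deterministic horizon \(t_n=\log n+A\), monotonicity of family sizes on \(\{\tau_n\le t_n\}\), and a union bound through the compensation formula for mutation births at rate \((1-p)Y(s)\). The only difference is the tail estimate: the paper bounds \(\PP(Y^{(p)}(t_n-s)>\eps n^p)\) by Markov's inequality with an \(r\)th moment, \(r>1/p\), giving \(C_{\eps,r,A}e^{-(pr-1)T}\), whereas you use the exact geometric tail, which gives a doubly exponentially decaying integrand; either bound suffices.
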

	
	\begin{proof}
		Choose \(r>1/p\).  A rate-\(p\) Yule process started from one
		particle has geometric law
		\(\PP(Y^{(p)}(u)=\ell)=e^{-pu}(1-e^{-pu})^{\ell-1}\), \(\ell\ge1\);
		hence, for this fixed \(r\),
		\[
		\EE [Y^{(p)}(u)]^r\le C_r e^{pru},\qquad u\ge0.
		\]
		Fix \(A>0\), put \(t_n=\log n+A\), and write
		\[
		M_{T,n}:=\max_{j:\,T<\sigma_j\le \tau_n}
		Y_j^{(p)}(\tau_n-\sigma_j).
		\]
		Then \(\PP(M_{T,n}>\eps n^p)\le \PP(\tau_n>t_n)+
		\PP(M_{T,n}>\eps n^p,\tau_n\le t_n)\).  On the event
		\(\{\tau_n\le t_n\}\), every family born before \(\tau_n\) has only grown
		further by time \(t_n\), so
		\[
		M_{T,n}\le
		\max_{j:\,T<\sigma_j\le t_n}Y_j^{(p)}(t_n-\sigma_j).
		\]
		Consequently, by the union bound, Markov's inequality, and the compensation
		formula, using that mutation births occur at predictable rate
		\((1-p)Y(s)\,ds\),
		\[
		\begin{aligned}
			\PP(M_{T,n}>\eps n^p,\tau_n\le t_n)
			&\le C_{\eps,r}n^{-pr}\int_T^{t_n}
			\EE Y(s)\,e^{pr(t_n-s)}\,ds\\
			&=
			C_{\eps,r}n^{-pr}
			\int_T^{t_n}e^s e^{pr(t_n-s)}\,ds .
		\end{aligned}
		\]
		Since \(t_n=\log n+A\) and \(pr>1\),
		\[
		C_{\eps,r}n^{-pr}
		\int_T^{t_n}e^s e^{pr(t_n-s)}\,ds
		\le C_{\eps,r,A}e^{-(pr-1)T}.
		\]
		Thus, for every fixed \(A\),
		\[
		\limsup_{n\to\infty}\PP(M_{T,n}>\eps n^p)\le
		\limsup_{n\to\infty}\PP(\tau_n>\log n+A)
		+C_{\eps,r,A}e^{-(pr-1)T}.
		\]
		Since \(\tau_n-\log n\to-\log E\) almost surely, the first term is at most
		\(\PP(-\log E>A)\).  Letting \(T\to\infty\) and then \(A\to\infty\)
		proves the claim.
	\end{proof}
	
	\begin{lem}[Fixed-size family counts and uniform family moments]
		\label{lem:fixed-size-family-counts}
		Recall that \(W_{n,i}=|\cB_{n,i}|\) are the ranked retained-tree blob
		sizes.  The following estimates hold.
		\begin{enumeratea}
			\item Let \(N_k(t)\) be the number of type families of size \(k\) at Yule
			time \(t\).  Then, for each fixed \(k\ge1\),
			\[
			\frac{N_k(t)}{Y(t)}
			\to
			q_k
			:=
			\frac{1-p}{p}B\left(k,1+\frac1p\right)
			\qquad\text{a.s.},
			\]
			where \(B(\cdot,\cdot)\) is the Beta function.  If
			\(N_{n,k}:=\#\{i:W_{n,i}=k\}\), then \(N_{n,k}/n\to q_k\) almost surely.
			\item For every fixed \(r>0\),
			\begin{equation}
				\label{eq:uniform-family-moments}
				\sum_i W_{n,i}^r
				=
				\begin{cases}
					O_{\PP}(n), & pr<1,\\
					O_{\PP}(n\log n), & pr=1,\\
					O_{\PP}(n^{pr}), & pr>1.
				\end{cases}
			\end{equation}
			In particular, for every \(\eta>0\),
			\[
			\sum_i W_{n,i}^r
			=
			O_{\PP}\left(n^{\max\{1,rp\}+\eta}\right).
			\]
			\item For every \(\eta\in(0,1/p)\),
			\begin{equation}
				\label{eq:uniform-blob-count-tail}
				\sup_{x\ge1}
				\frac{x^{1/p-\eta}}{n}
				\#\{i:W_{n,i}\ge x\}
				=O_{\PP}(1).
			\end{equation}
			\item If \(pr>1\), then the ranked \(r\)-mass is uniformly tight at scale
			\(n^{pr}\): for every \(\eps>0\),
			\begin{equation}
				\label{eq:ranked-family-r-tail}
				\lim_{L\to\infty}\limsup_{n\to\infty}
				\PP\left(
				n^{-pr}\sum_{i>L}W_{n,i}^r>\eps
				\right)=0.
			\end{equation}
		\end{enumeratea}
	\end{lem}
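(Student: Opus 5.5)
We work throughout in the Yule construction of Section~\ref{sec:proof-rrt-blob-input}, stopped at \(\tau_n\). The four parts are reduced, respectively, to a Markov-chain\slash urn law of large numbers, to first-moment bounds via the compensation formula, and to Lemmas~\ref{lem:early-mutation-limits} and~\ref{lem:late-mutation-negligible}.

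\textbf{Part (a).} Each mutation birth creates a family of size one. A family of size \(k\) grows to size \(k+1\) at rate \(pk\), and it also spawns mutant children at rate \((1-p)k\), which do not change its size. Hence the vector \((N_k(t))_k\) evolves as a generalized Pólya\slash Simon urn driven by the total population \(Y(t)\sim Ee^t\). Taking expectations against the compensators gives the linear system
\[
\frac{d}{dt}\EE N_k=pk\,\EE N_k\text{-loss}+p(k-1)\EE N_{k-1}\text{-gain}+(1-p)\ind\{k=1\}\EE Y .
\]
Look for solutions of the form \(\EE N_k\approx c_k e^t\). Then
\[
c_k\bigl(1+pk\bigr)=p(k-1)c_{k-1}+(1-p)\ind\{k=1\}.
\]
The solution is \(c_1=(1-p)/(1+p)\) together with the product formula, which equals \(\frac{1-p}{p}B(k,1+1/p)\).

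To upgrade to almost-sure convergence, use the standard stochastic-approximation argument (as in Simon\slash Yule urn models). Write
\[
N_k(t)=\text{(drift integral)}+\text{martingale},
\]
where the martingale has quadratic variation \(O(\int_0^t Y(s)\,ds)=O(e^t)\). Then \(e^{-t}\)-normalized martingales converge a.s. to \(0\) by a Borel--Cantelli bound along integer times, together with monotone interpolation. Induction on \(k\) gives \(N_k(t)/Y(t)\to q_k\) a.s. Evaluating at \(t=\tau_n\), where \(Y(\tau_n)=n\), yields \(N_{n,k}/n\to q_k\).

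\textbf{Part (b).} For the moments, use the compensation formula as in Lemma~\ref{lem:late-mutation-negligible}. The geometric law gives \(\EE[Y^{(p)}(u)^r]\le C_r e^{pru}\), so, with \(t_n=\log n+A\),
\[
\EE\sum_{j}Y_j^{(p)}(t_n-\sigma_j)^r\le C_r\Bigl(e^{prt_n}+\int_0^{t_n}e^{s}e^{pr(t_n-s)}\,ds\Bigr).
\]
This integral is \(O(n)\), \(O(n\log n)\), or \(O(n^{pr})\) according as \(pr<1\), \(pr=1\), or \(pr>1\). Monotonicity in time on \(\{\tau_n\le t_n\}\) and Markov's inequality then give \eqref{eq:uniform-family-moments}, after letting \(A\to\infty\) as in that lemma.

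\textbf{Part (c).} Apply the same device to the counts. By the compensation formula,
\[
\EE\#\{j:Y_j^{(p)}(t_n-\sigma_j)\ge x\}\le\int_0^{t_n}e^s(1-e^{-p(t_n-s)})^{\lceil x\rceil-1}\,ds\lesssim n\,x^{-1/p},
\]
uniformly in \(x\ge1\). This bound is obtained by substituting \(v=t_n-s\) and using \(\int_0^\infty e^{-v}(1-e^{-pv})^{x}\,dv\asymp x^{-1/p}\).

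The obstacle is the supremum over \(x\), which the pointwise bound does not control. The plan is as follows.
\begin{enumerate}
\item Discretize \(x\) dyadically: \(x=2^m\). The count is monotone in \(x\), so controlling dyadic points suffices up to a factor \(2^{1/p}\).
\item Bound
\[
\PP\Bigl(\sup_m 2^{m(1/p-\eta)}n^{-1}\#\{\ge2^m\}>M\Bigr)\le\sum_m M^{-1}2^{-m\eta}C,
\]
which is \(O(1/M)\). The slack \(\eta\) is exactly what makes this series summable.
\end{enumerate}

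\textbf{Part (d).} Fix \(T\) and split the blobs into early families (born by time \(T\)) and late families. For the late ones, repeat the moment computation of part (b) with the integral restricted to \(s>T\). Since \(pr>1\), this contributes \(O(n^{pr}e^{-(pr-1)T})\) in expectation, which is small after \(T\to\infty\).

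For the early families, Lemma~\ref{lem:early-mutation-limits} shows that, after rescaling by \(n^{p}\), they converge to a finite point set \(\{E^{-p}e^{-p\sigma_j}\xi_j\}\). Consider the tail sum
\[
\sum_{j:\sigma_j\le T} e^{-pr\sigma_j}\xi_j^r ,
\]
taken over early families ranked beyond \(L\). This sum is a.s. finite: its expectation is \(\int_0^\infty e^s e^{-prs}\,ds<\infty\) up to the factor \(E\) controlling the mutation intensity. Hence the contribution of early families ranked beyond \(L\) is small for \(L\) large.

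One more check is needed: among the top \(L\) ranked blobs overall, late families cannot displace enough mass to matter. This follows because the late families are controlled in total \(r\)-mass regardless of rank, while the ranked \(r\)-mass beyond position \(L\) is bounded by the early tail plus the late total. Combining the two estimates gives \eqref{eq:ranked-family-r-tail}.

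The main difficulty is this last interchange of limits in \(T\) and \(L\) (and the martingale step in part (a)). I expect it to be resolved by the uniform first-moment bounds above.
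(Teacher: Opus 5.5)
Your proposal is correct. For parts (a), (b) and (d) it follows the paper's route:
\begin{enumeratei}
\item for (a), a counting-process martingale decomposition of $N_k$ with induction on $k$;
\item for (b), the compensation formula combined with $\EE[Y^{(p)}(u)^r]\le C_re^{pru}$ and monotonicity on $\{\tau_n\le\log n+A\}$;
\item for (d), an early/late split at a fixed time $T$, with late $r$-mass of order $n^{pr}e^{-(pr-1)T}$ in expectation.
\end{enumeratei}

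Part (c) is where you genuinely differ. You bound $\EE\#\{j:Y_j^{(p)}(t_n-\sigma_j)\ge x\}\le Cnx^{-1/p}$ directly, via the compensation formula and a Beta integral. You then handle the supremum over $x$ by a dyadic union bound, made summable by the slack $\eta$. The paper avoids the supremum altogether. With $a=1/p-\eta$, one has deterministically and for all $x\ge1$ at once $x^{a}\#\{i:W_{n,i}\ge x\}\le\sum_iW_{n,i}^{a}$. Since $pa<1$, part (b) gives $\sum_iW_{n,i}^a=O_{\PP}(n)$. So in the paper (c) is a two-line corollary of (b). Your route costs an extra computation, but it yields a sharper pointwise bound without the $\eta$- or $\log n$-loss. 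Both arguments are valid.

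Two smaller points. In (a), monotone interpolation from integer times is not available as stated. Neither $N_k$ nor its martingale part $M_k$ is monotone. Bounding the oscillation of $M_k$ on $[m,m+1]$ through its increasing pieces (jump count and compensator) only gives $O(e^m)$, not $o(e^m)$. The paper instead applies Doob's maximal inequality on each unit interval to $\int_0^t e^{pks}\,dM_k(s)$. This settles the interpolation and, through the integrating factor, the $-pkN_k$ drift in one step; your expectation ODE is then superfluous.

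In (d), the detour through Lemma~\ref{lem:early-mutation-limits} and the limiting series is unnecessary. For fixed $T$, the number $K_T$ of families born by time $T$ is a.s.\ finite. So on $\{K_T\le L\}$, your extremal inequality (the top $L$ families carry at least as much $r$-mass as any other $L$ families) bounds $\sum_{i>L}W_{n,i}^r$ by the late mass alone. There is also no genuine interchange-of-limits issue. The bound holds for every $T$, so you let $n\to\infty$, then $L\to\infty$, then $T\to\infty$ and $A\to\infty$.
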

	
	\begin{proof}
		We prove the four assertions in order.  A size-\(k\) family grows to size
		\(k+1\) at rate \(pk\), and new size-one families are born at rate
		\((1-p)Y(t)\).  Thus, for fixed \(k\), there are counting-process
		martingales \(M_k\) such that
		\[
		\begin{aligned}
			N_1(t)
			&=1+\int_0^t\{(1-p)Y(s)-pN_1(s)\}\,ds+M_1(t),\\
			N_k(t)
			&=\int_0^t\{p(k-1)N_{k-1}(s)-pkN_k(s)\}\,ds+M_k(t),
			\qquad k\ge2.
		\end{aligned}
		\]
		The predictable quadratic variations satisfy
		\(\EE d\langle M_k\rangle_s\le C_k\EE Y(s)\,ds=C_ke^s\,ds\), since
		\(N_j(s)\le Y(s)\).  Solving these linear martingale equations by
		integrating factors, set
		\[
		R_1(t):=e^{-pt}\int_0^t e^{ps}\,dM_1(s),
		\qquad
		R_k(t):=e^{-pkt}\int_0^t e^{pks}\,dM_k(s),\quad k\ge2 .
		\]
		For \(a_k=pk\), put \(H_k(t)=\int_0^t e^{a_ks}\,dM_k(s)\), so that
		\(R_k(t)=e^{-a_kt}H_k(t)\).  Doob's inequality on unit intervals and the
		quadratic-variation bound give, for every \(\eps>0\),
		\[
		\begin{aligned}
			\sum_{m\ge1}
			\PP\left(
			\sup_{m\le t\le m+1}e^{-t}|R_k(t)|>\eps
			\right)
			&\le
			\eps^{-2}\sum_{m\ge1}
			e^{-2(1+a_k)m}
			\EE\sup_{0\le u\le m+1}|H_k(u)|^2  \\
			&\le
			C_{k,\eps}\sum_{m\ge1}
			e^{-2(1+a_k)m}
			\int_0^{m+1} e^{(2a_k+1)s}\,ds
			<\infty .
		\end{aligned}
		\]
		By Borel--Cantelli,
		\[
		e^{-t}R_k(t)\to0\qquad\text{a.s.}
		\]
		The integrated identities are
		\[
		\begin{aligned}
			N_1(t)
			&=e^{-pt}
			+(1-p)\int_0^t e^{-p(t-s)}Y(s)\,ds+R_1(t),\\
			N_k(t)
			&=p(k-1)\int_0^t e^{-pk(t-s)}N_{k-1}(s)\,ds+R_k(t),
			\qquad k\ge2.
		\end{aligned}
		\]
		Because \(e^{-t}Y(t)\to E>0\) almost surely, the first formula gives
		\[
		e^{-t}N_1(t)\to
		(1-p)E\int_0^\infty e^{-(1+p)u}\,du
		=
		\frac{1-p}{1+p}E
		\qquad\text{a.s.}
		\]
		Indeed, after setting \(u=t-s\), the integrand is
		\[
		e^{-pu}e^{-t}Y(t-u)
		=
		e^{-(1+p)u}e^{-(t-u)}Y(t-u),
		\]
		and pathwise dominated convergence applies on compact \(u\)-intervals; the
		tail \(u>A\) is bounded by \(C(\omega)e^{-pA}\) for all large \(t\).
		
		Inductively, if \(e^{-t}N_{k-1}(t)\to q_{k-1}E\) almost surely, then, after
		setting \(u=t-s\), the main term in \(e^{-t}N_k(t)\) is
		\[
		p(k-1)\int_0^t e^{-pku}
		e^{-t}N_{k-1}(t-u)\,du
		=
		p(k-1)\int_0^t e^{-(1+pk)u}
		e^{-(t-u)}N_{k-1}(t-u)\,du .
		\]
		Using \(N_{k-1}(v)\le Y(v)\), the same dominated-convergence argument gives
		\(e^{-t}N_k(t)\to q_kE\) almost surely, where
		\[
		q_k
		=
		p(k-1)q_{k-1}\int_0^\infty e^{-(1+pk)u}\,du
		=
		\frac{p(k-1)}{1+pk}\,q_{k-1}.
		\]
		Hence
		\[
		\frac{N_k(t)}{Y(t)}
		\to q_k
		\qquad\text{a.s.}
		\]
		The initial value is
		\[
		q_1=\frac{1-p}{1+p},
		\qquad
		q_k=\frac{p(k-1)}{1+pk}q_{k-1}.
		\]
		Since
		\[
		\frac{B(k,1+1/p)}{B(k-1,1+1/p)}
		=
		\frac{k-1}{k+1/p},
		\]
		this recurrence is solved by
		\[
		q_k=\frac{1-p}{p}B\left(k,1+\frac1p\right).
		\]
		The same convergence holds at the random times \(\tau_n\), since
		\(\tau_n\to\infty\) a.s. and \(Y(\tau_n)=n\).  As \(N_{n,k}=N_k(\tau_n)\),
		this proves part (a).
		
		For part (b), let \(Y^{(p)}\) be a rate-\(p\) Yule process started from one
		particle and put
		\[
		m_r^{(p)}(u):=\EE\big[Y^{(p)}(u)^r\big].
		\]
		The law of \(Y^{(p)}(u)\) is geometric:
		\[
		\PP\big(Y^{(p)}(u)=\ell\big)
		=a(1-a)^{\ell-1},
		\qquad a=e^{-pu}.
		\]
		For \(0<r\le1\), Jensen's inequality gives
		\(m_r^{(p)}(u)\le e^{pu}\).  For \(r>1\), comparison of the geometric
		sum with an exponential integral gives
		\[
		a\sum_{\ell\ge1}\ell^r(1-a)^{\ell-1}
		\le C_r a^{-r}.
		\]
		Consequently, for every fixed \(r>0\),
		\begin{equation}
			\label{eq:yule-r-moment}
			m_r^{(p)}(u)
			\le C_r e^{p(r\vee1)u},
			\qquad u\ge0.
		\end{equation}
		
		Let \(S_r(t)\) be the sum of the \(r\)th powers of all family sizes at time
		\(t\), including the ancestral family.  The ancestral family contributes
		\(m_r^{(p)}(t)\) in expectation.  Mutation families are born with predictable
		rate \((1-p)Y(s)\,ds\), and a family born at time \(s\) then evolves as an
		independent rate-\(p\) Yule process for time \(t-s\).  Therefore
		\begin{equation}
			\label{eq:family-moment-compensator}
			\begin{aligned}
				\EE S_r(t)
				&=m_r^{(p)}(t)
				+(1-p)\int_0^t \EE Y(s)\,m_r^{(p)}(t-s)\,ds \\
				&=m_r^{(p)}(t)
				+(1-p)\int_0^t e^s m_r^{(p)}(t-s)\,ds .
			\end{aligned}
		\end{equation}
		Using \eqref{eq:yule-r-moment} in
		\eqref{eq:family-moment-compensator}, the integral term is bounded by
		\(C_r e^{p(r\vee1)t}\int_0^t e^{(1-p(r\vee1))s}\,ds\); evaluating this
		elementary integral gives the three cases below, with \(0<r\le1\) included
		in the first case since \(p<1\).  Thus
		\begin{equation}
			\label{eq:deterministic-time-family-moments}
			\EE S_r(t)
			\le C_r
			\begin{cases}
				e^t, & pr<1,\\
				(1+t)e^t, & pr=1,\\
				e^{prt}, & pr>1.
			\end{cases}
		\end{equation}
		
		Fix \(A>0\) and set \(t_n=\log n+A\).  On
		\(\{\tau_n\le t_n\}\), monotonicity of every family size and of the number
		of families gives
		\[
		\sum_iW_{n,i}^r=S_r(\tau_n)\le S_r(t_n).
		\]
		Since \(\tau_n-\log n\to-\log E\) almost surely, the probability of
		\(\{\tau_n>t_n\}\) can be made arbitrarily small by first taking \(n\)
		large and then \(A\) large.  Markov's inequality applied to
		\eqref{eq:deterministic-time-family-moments} at the corresponding scale
		\(n\), \(n\log n\), or \(n^{pr}\) proves
		\eqref{eq:uniform-family-moments}.
		
		Part (c) follows from part (b) by Markov's inequality in size space.  Fix
		\(\eta\in(0,1/p)\) and set \(a=1/p-\eta\).  Since \(pa<1\), the first line
		of \eqref{eq:uniform-family-moments} gives
		\(\sum_iW_{n,i}^a=O_{\PP}(n)\), while deterministically
		\[
		\#\{i:W_{n,i}\ge x\}
		\le x^{-a}\sum_iW_{n,i}^a,
		\qquad x\ge1.
		\]
		Taking the supremum over \(x\) proves the claimed uniform tail.
		
		It remains to prove part (d).  Assume \(pr>1\), and let
		\[
		S_{r,>T}(t)
		:=
		\sum_{j:\,T<\sigma_j\le t}
		\big(Y_j^{(p)}(t-\sigma_j)\big)^r .
		\]
		The same compensation argument, now restricted to mutation times after
		\(T\), gives, for \(t_n=\log n+A\),
		\[
		\begin{aligned}
			\EE S_{r,>T}(t_n)
			&\le
			C_r\int_T^{t_n}e^s e^{pr(t_n-s)}\,ds \\
			&\le
			C_{r,A}n^{pr}e^{-(pr-1)T}.
		\end{aligned}
		\]
		As above, on \(\{\tau_n\le t_n\}\) the late-family mass at \(\tau_n\) is
		bounded by \(S_{r,>T}(t_n)\).  Hence, for each fixed \(A\),
		\[
		\begin{aligned}
			&\limsup_{n\to\infty}
			\PP\left(
			n^{-pr}
			\sum_{j:\,T<\sigma_j\le\tau_n}
			\big(Y_j^{(p)}(\tau_n-\sigma_j)\big)^r
			>\eps
			\right)\\
			&\qquad\le
			\limsup_{n\to\infty}\PP(\tau_n>\log n+A)
			+C_{\eps,r,A}e^{-(pr-1)T}.
		\end{aligned}
		\]
		Letting \(T\to\infty\) and then \(A\to\infty\) gives
		\begin{equation}
			\label{eq:late-family-r-mass}
			\lim_{T\to\infty}\limsup_{n\to\infty}
			\PP\left(
			n^{-pr}
			\sum_{j:\,T<\sigma_j\le\tau_n}
			\big(Y_j^{(p)}(\tau_n-\sigma_j)\big)^r
			>\eps
			\right)=0.
		\end{equation}
		Let \(K_T\) be the number of families, including the ancestral family, born
		by time \(T\).  Then \(K_T<\infty\) almost surely.  On \(\{K_T\le L\}\),
		the sum of the \(r\)th powers outside the largest \(L\) families is no
		larger than the sum outside any prescribed collection of at most \(L\)
		families; choosing all families born by time \(T\) gives
		\[
		\sum_{i>L}W_{n,i}^r
		\le
		\sum_{j:\,T<\sigma_j\le\tau_n}
		\big(Y_j^{(p)}(\tau_n-\sigma_j)\big)^r.
		\]
		Indeed, the largest \(L\) families maximize the retained \(r\)-mass among all
		collections of \(L\) families, so removing them leaves no more \(r\)-mass than
		removing the \(K_T\) early families.  Therefore, for every \(\eps>0\),
		\[
		\begin{aligned}
			\PP\left(
			n^{-pr}\sum_{i>L}W_{n,i}^r>\eps
			\right)
			&\le
			\PP(K_T>L) \\
			&\quad+
			\PP\left(
			n^{-pr}
			\sum_{j:\,T<\sigma_j\le\tau_n}
			\big(Y_j^{(p)}(\tau_n-\sigma_j)\big)^r>\eps
			\right).
		\end{aligned}
		\]
		For fixed \(T\), the first term vanishes as \(L\to\infty\).  Then
		\eqref{eq:late-family-r-mass} sends the second term to zero as
		\(T\to\infty\).  This proves
		\eqref{eq:ranked-family-r-tail}.
	\end{proof}
	
	\begin{prop}[RRT percolation blob asymptotics]
		\label{prop:rrt-blob-input}
		Fix \(p\in(0,1)\).
		\begin{enumeratea}
			\item The root blob satisfies
			\[
			n^{-p}|\cB_n(1)|\to Z_1
			\qquad\text{a.s.},
			\]
			where \(Z_1\) is non-degenerate and strictly positive.
			\item For every fixed \(m\), the vector of the \(m\) largest ranked blob sizes
			satisfies
			\[
			n^{-p}(W_{n,1},\ldots,W_{n,m})
			\to
			(W_1,\ldots,W_m)
			\qquad\text{a.s.},
			\]
			where \(W_1>W_2>\cdots>W_m>0\) almost surely after the same tie convention.
			Moreover, the ranked sequence is tight in the product topology and
			\[
			\lim_{K\to\infty}\limsup_{n\to\infty}
			\PP\left(
			|\cB_n(1)|<W_{n,K}
			\right)=0.
			\]
			\item If \(N_{n,k}\) is the number of blobs of size \(k\), then for every
			fixed \(k\ge1\),
			\[
			\frac{N_{n,k}}{n}
			\to
			\frac{1-p}{p}B\left(k,1+\frac1p\right)
			\qquad\text{a.s.}
			\]
			\item If \(U_n\) is uniform on \([n]\), independent of \(T_n\) and the
			percolation, then
			\[
			|\cB_n(U_n)|\weakc W^\star,
			\qquad
			\PP(W^\star=k)
			=
			k\frac{1-p}{p}B\left(k,1+\frac1p\right).
			\]
		\end{enumeratea}
	\end{prop}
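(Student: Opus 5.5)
The plan is to read off all four parts from the Yule embedding, combining Lemmas~\ref{lem:early-mutation-limits}, \ref{lem:late-mutation-negligible} and \ref{lem:fixed-size-family-counts}.

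For part (a), the root blob is the ancestral family, so \(|\cB_n(1)|=Y_0^{(p)}(\tau_n)\). As in the proof of Lemma~\ref{lem:early-mutation-limits} with \(\sigma_0=0\), this gives
\[
n^{-p}|\cB_n(1)| = \left(\tfrac{e^{\tau_n}}{n}\right)^p e^{-p\tau_n}Y_0^{(p)}(\tau_n)\to E^{-p}\xi_0=:Z_1
\]
almost surely. The limit \(Z_1\) is strictly positive and non-degenerate because \(E\) and \(\xi_0\) are positive continuous variables.

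Part (c) is exactly Lemma~\ref{lem:fixed-size-family-counts}(a). For part (d), note that \(\PP(|\cB_n(U_n)|=k\mid\text{tree, percolation})=kN_{n,k}/n\). This converges almost surely to \(kq_k\) for each \(k\). Moreover \(\sum_k kq_k=1\): this follows from the Beta-function identity, or from Fatou's lemma together with the fact that \(\sum_k kN_{n,k}/n=1\) and tightness, which is Lemma~\ref{lem:fixed-size-family-counts}(c) with small \(\eta\). Scheffé's lemma then gives convergence in total variation of the quenched laws, and taking expectations gives the annealed weak convergence.

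Part (b) is the main step. Fix \(m\) and \(\delta>0\).
\begin{itemize}
\item By Lemma~\ref{lem:late-mutation-negligible}, choose \(T\) such that, with probability at least \(1-\delta\) for all large \(n\), every family born after \(T\) has size at most \(\eps n^p\).
\item By Lemma~\ref{lem:early-mutation-limits}, the early families converge almost surely to the distinct points \(E^{-p}e^{-p\sigma_j}\xi_j\).
\item Define \((W_k)\) as the ranked points of the full point set \(\{E^{-p}e^{-p\sigma_j}\xi_j:j\ge0\}\). This set is locally finite away from zero: the expected number of points above \(x\) is \(\int_0^\infty (1-p)e^s\PP(\xi>xe^{ps})\,ds<\infty\) up to the random factor \(E\), so with \(\sigma_0=0\) it is almost surely finite. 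Hence the ranking is well defined and the points are almost surely distinct.
\item On the event that \(W_m>\eps\), which has high probability as \(\eps\downarrow0\), and for \(T\) large enough that all points above \(\eps\) come from families born before \(T\), the top \(m\) ranked blobs at time \(\tau_n\) are early families. Their rescaled sizes therefore converge to \((W_1,\ldots,W_m)\).
\end{itemize}
This yields convergence in probability. To upgrade to almost sure convergence, I would replace the \(\limsup\) probability bound by a pathwise statement, namely \(\limsup_n n^{-p}\max_{\sigma_j>T}(\cdot)\to0\) almost surely as \(T\to\infty\). I expect that to be the main obstacle. If it resists, I would try a Borel--Cantelli argument along integer times \(t\), using the moment bound with \(r>1/p\) that appears in the proof of Lemma~\ref{lem:late-mutation-negligible}, together with monotonicity of family sizes between integer times.

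Tightness of the ranked sequence then follows from the convergence. Finally, \(\PP(|\cB_n(1)|<W_{n,K})\) is bounded as follows. Since \(W_{n,K}/n^p\to W_K\) and \(W_K\to0\) almost surely as \(K\to\infty\), while \(Z_1>0\) almost surely, we obtain
\[
\limsup_n\PP(|\cB_n(1)|<W_{n,K})\le \PP(Z_1\le W_K)\to0 .
\]
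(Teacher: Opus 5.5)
Your parts (a), (c) and (d) are fine and match the paper, and so is the root-rank bound in (b). The gap is the \emph{almost sure} convergence claimed in (b). Your early/late truncation relies on Lemma~\ref{lem:late-mutation-negligible}, which controls late families only in probability and only at the single time \(\tau_n\). So it yields convergence in probability, and you leave the pathwise upgrade open.

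Your fallback would not supply that upgrade. The compensation bound gives
\[
\PP\Bigl(e^{-pk}\max_{T<\sigma_j\le k}Y_j^{(p)}(k-\sigma_j)>\eps\Bigr)\le C_{\eps,r}e^{-(pr-1)T},
\]
which is uniform in \(k\) but not summable in \(k\). No summable bound exists either. Each late family satisfies \(e^{-pk}Y_j^{(p)}(k-\sigma_j)\to e^{-p\sigma_j}\xi_j\), so by Fatou's lemma the \(\liminf\) in \(k\) of these probabilities is at least \(\PP(\exists j:\sigma_j>T,\ e^{-p\sigma_j}\xi_j>\eps)>0\). The events at successive integer times are essentially one event repeated, so Borel--Cantelli in time cannot work.

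The missing ingredient is a bound for each family that is uniform in time, then summed over families. Let \(X_j:=\sup_{u\ge0}e^{-pu}Y_j^{(p)}(u)\) and take \(r>1/p\), so \(r>1\). Doob's \(L^r\) inequality for the martingale \(e^{-pu}Y_j^{(p)}(u)\), together with the geometric moment bound, gives \(\EE X_j^r\le C_r\). Since \(X_j\) depends only on the family after its birth, it is independent of the history up to \(\sigma_j\), and the compensation formula gives
\[
\EE\sum_{j\ge1}e^{-pr\sigma_j}X_j^r\le C_r(1-p)\int_0^\infty e^{(1-pr)s}\,ds<\infty .
\]
Hence \(\sup_{j:\sigma_j>T}e^{-p\sigma_j}X_j\to0\) almost surely as \(T\to\infty\). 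We also have \(n^{-p}Y_j^{(p)}(\tau_n-\sigma_j)\le(e^{\tau_n}/n)^p e^{-p\sigma_j}X_j\), with \(\sup_n e^{\tau_n}/n<\infty\) almost surely. So late families are negligible at scale \(n^p\) uniformly in \(n\), pathwise. Substituting this for Lemma~\ref{lem:late-mutation-negligible}, your argument runs sample path by sample path and gives the almost sure statement; this is exactly the paper's proof.

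Separately, distinctness of the limiting points does not follow from local finiteness, as your ``hence'' suggests, and needs its own argument. For example, when \(\sigma_j<\sigma_k\), \(\xi_k\) is \(\operatorname{Exp}(1)\) and independent of the history up to \(\sigma_k\) and of \(\xi_j\). Therefore \(e^{-p\sigma_k}\xi_k\neq e^{-p\sigma_j}\xi_j\) almost surely.
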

	
	\begin{proof}
		The root assertion is the root-family almost-sure limit in the Yule
		embedding; explicitly \(Z_1=E^{-p}\xi_0\).
		
		For the ranked-vector statement, first fix \(T\).
		Lemma~\ref{lem:early-mutation-limits} gives almost-sure joint convergence of
		the ranked early families.  We now upgrade the late-family truncation to an
		almost-sure one.  Let
		\[
		X_j:=\sup_{u\ge0}e^{-pu}Y_j^{(p)}(u).
		\]
		For any \(r>1/p\), Doob's \(L^r\) inequality for the nonnegative family
		martingales gives \(\EE X_j^r\le C_r\).  The compensation formula for mutation
		births gives
		\[
		\EE\sum_{j\ge1}e^{-pr\sigma_j}X_j^r
		\le
		C_r(1-p)\int_0^\infty e^{-prs}\EE Y(s)\,ds
		<\infty .
		\]
		Hence \(\sum_{j\ge1}e^{-pr\sigma_j}X_j^r<\infty\) almost surely, and therefore
		\[
		\sup_{j:\sigma_j>T}e^{-p\sigma_j}X_j\to0
		\qquad\text{a.s.}
		\]
		For \(t\ge\sigma_j\),
		\[
		e^{-pt}Y_j^{(p)}(t-\sigma_j)
		\le
		e^{-p\sigma_j}X_j.
		\]
		Together with \(e^{\tau_n}/n\to E^{-1}\), this shows that families born after
		\(T\) contribute no \(n^p\)-scale mass as \(T\to\infty\), almost surely.
		
		The finite early-family convergence, this almost-sure late-tail bound, and
		the almost-sure absence of ties imply that every fixed initial ranked segment
		converges almost surely to the ranked points of
		\[
		\left\{E^{-p}\xi_0\right\}
		\cup
		\left\{E^{-p}e^{-p\sigma_j}\xi_j:j\ge1\right\}.
		\]
		The limiting points are locally finite and almost surely distinct by the
		same conditional-continuity argument as in
		Lemma~\ref{lem:early-mutation-limits}.
		Tightness in the product topology follows from this convergence.
		
		The root-rank tightness follows from the same representation.  Conditional on
		the root limit \(E^{-p}\xi_0>0\), only finitely many points of the limiting
		sequence exceed \(E^{-p}\xi_0/2\).  The convergence of ranked early families and
		the late-family truncation imply that the number of blobs larger than the root
		blob is tight.  Since the root blob has smallest label \(1\), the deterministic
		tie rule ranks it before every other blob of the same size; hence controlling
		the number of strictly larger blobs controls its rank.
		
		Part (c) is Lemma~\ref{lem:fixed-size-family-counts} evaluated at
		\(\tau_n\).  Finally, conditional on the blob partition,
		\[
		\PP(|\cB_n(U_n)|=k)
		=
		\frac{kN_{n,k}}{n}.
		\]
		Using the fixed-size convergence and the identity
		\(\sum_{k\ge1}kq_k=1\), the law of \(|\cB_n(U_n)|\) converges to the displayed
		size-biased distribution.
	\end{proof}
	
	\begin{cor}[Backbone susceptibility]
		\label{cor:backbone-susceptibility}
		For \(p<1/2\),
		\[
		\frac1n\sum_i W_{n,i}^2
		=
		\EE\left[|\cB_n(U_n)|\mid T_n,\text{ percolation}\right]
		\probc
		\frac{1}{1-2p}.
		\]
	\end{cor}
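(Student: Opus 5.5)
The identity in the display is deterministic. Conditional on $T_n$ and the percolation, $U_n$ lies in $\cB_{n,i}$ with probability $W_{n,i}/n$, so
\[
\EE\left[|\cB_n(U_n)|\mid T_n,\text{ percolation}\right]=\sum_i \frac{W_{n,i}}{n}\,W_{n,i}=\frac1n\sum_i W_{n,i}^2 .
\]
The content of the corollary is therefore the convergence in probability. I will split the sum by blob size and control the two pieces separately. Fix a cutoff $k_0$ and write
\[
\frac1n\sum_i W_{n,i}^2=\sum_{k\le k_0}k^2\frac{N_{n,k}}{n}+\frac1n\sum_{i:\,W_{n,i}>k_0}W_{n,i}^2 .
\]

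The first term is a finite sum. By Lemma~\ref{lem:fixed-size-family-counts}(a) it converges almost surely to $\sum_{k\le k_0}k^2q_k$. This partial sum increases to $\sum_k k^2 q_k$ as $k_0\to\infty$, and that series equals $1/(1-2p)$. To check this value I will use $q_k=\frac{1-p}{p}B(k,1+1/p)$ together with the Beta integral representation, which gives $\sum_k k^2q_k=\frac{1-p}{p}\int_0^1 (1-x)^{1/p}\sum_k k^2x^{k-1}\,dx$. Then I substitute $\sum_k k^2x^{k-1}=(1+x)/(1-x)^3$ and evaluate the two resulting Beta integrals. Convergence requires $1/p-3>-1$, which is exactly $p<1/2$. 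Alternatively, the recurrence $q_k(1+pk)=p(k-1)q_{k-1}$ can be summed against $k$ to obtain a linear equation for the second moment.

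The second term is the main obstacle: I need it to be uniformly small in probability, that is,
\[
\lim_{k_0\to\infty}\limsup_{n\to\infty}\PP\Big(\frac1n\sum_{W_{n,i}>k_0}W_{n,i}^2>\eps\Big)=0 .
\]
A plain $O_\PP(n)$ bound from Lemma~\ref{lem:fixed-size-family-counts}(b) is not enough here. Instead I will choose $r\in(2,1/p)$, which is possible since $p<1/2$. The bound $W^2\le k_0^{2-r}W^r$ holds on $\{W>k_0\}$, so the tail term is at most $k_0^{2-r}\cdot n^{-1}\sum_i W_{n,i}^r$. Since $pr<1$, Lemma~\ref{lem:fixed-size-family-counts}(b) gives $n^{-1}\sum_iW_{n,i}^r=O_\PP(n\cdot n^{-1})=O_\PP(1)$. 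Together with $k_0^{2-r}\to0$, this yields the required uniform negligibility.

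Combining the two estimates with a standard $\eps/3$ argument finishes the proof. First take $n\to\infty$ for fixed $k_0$, then let $k_0\to\infty$; this gives $\frac1n\sum_iW_{n,i}^2\probc 1/(1-2p)$.
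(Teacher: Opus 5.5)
Your proposal is correct and follows the paper's proof. It uses the same conditional-expectation identity, the same split at a fixed size cutoff with Lemma~\ref{lem:fixed-size-family-counts}(a) for the bounded part, and the same Beta-integral evaluation of $\sum_k k^2q_k$. The only difference is in the tail step. You apply part (b) directly with $r\in(2,1/p)$ and the pointwise bound $W^2\ind\{W>k_0\}\le k_0^{2-r}W^r$. The paper instead uses the tail-count bound of part (c), itself obtained from (b) by Markov's inequality, together with a dyadic decomposition. Your route is slightly more direct, but both rest on the same moment estimate $\sum_i W_{n,i}^r=O_{\PP}(n)$ for some $r\in(2,1/p)$.
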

	
	\begin{proof}
		Conditional on the blob partition, a uniform vertex lies in blob \(i\) with
		probability \(W_{n,i}/n\), so
		\[
		\EE\left[|\cB_n(U_n)|\mid T_n,\text{ percolation}\right]
		=\frac1n\sum_iW_{n,i}^2.
		\]
		It remains to identify the limit of the last display.
		
		Write \(N_{n,k}=\#\{i:W_{n,i}=k\}\) and
		\[
		q_k=\frac{1-p}{p}B\left(k,1+\frac1p\right).
		\]
		For every fixed \(K\), Lemma~\ref{lem:fixed-size-family-counts}(a) gives
		\[
		\frac1n\sum_iW_{n,i}^2\ind\{W_{n,i}\le K\}
		=
		\sum_{k=1}^K k^2\frac{N_{n,k}}n
		\to
		\sum_{k=1}^K k^2q_k
		\qquad\text{a.s.}
		\]
		We now show that the truncation error is negligible uniformly as
		\(K\to\infty\).  Choose \(\eta>0\) so small that
		\[
		a:=\frac1p-\eta>2 .
		\]
		By Lemma~\ref{lem:fixed-size-family-counts}(c),
		\[
		Z_n:=\sup_{x\ge1}
		\frac{x^a}{n}\#\{i:W_{n,i}\ge x\}
		=O_{\PP}(1).
		\]
		For \(K\ge1\), a dyadic decomposition gives
		\[
		\begin{aligned}
			\frac1n\sum_iW_{n,i}^2\ind\{W_{n,i}>K\}
			&\le
			\sum_{\ell\ge0}
			\frac{(2^{\ell+1}K)^2}{n}
			\#\{i:W_{n,i}>2^\ell K\} \\
			&\le
			4Z_nK^{2-a}\sum_{\ell\ge0}2^{(2-a)\ell}.
		\end{aligned}
		\]
		Since \(a>2\), the geometric sum is finite and the right hand side tends to
		zero in probability as \(K\to\infty\).  The computation below shows that
		\(\sum_{k\ge1}k^2q_k<\infty\).  Combining this tail bound with the fixed-\(K\)
		convergence gives
		\[
		\frac1n\sum_iW_{n,i}^2
		\probc
		\sum_{k\ge1}k^2q_k .
		\]
		Finally, by Tonelli's theorem, the beta integral, and
		\(\sum_{k\ge1}k^2x^{k-1}=(1+x)/(1-x)^3\),
		\[
		\begin{aligned}
			\sum_{k\ge1}k^2q_k
			&=
			\frac{1-p}{p}
			\int_0^1
			\frac{1+x}{(1-x)^3}(1-x)^{1/p}\,dx \\
			&=
			\frac{1-p}{p}
			\left\{
			\frac{2}{1/p-2}-\frac{1}{1/p-1}
			\right\}
			=
			\frac{1}{1-2p}.
		\end{aligned}
		\]
		This proves the claimed convergence.
	\end{proof}
	
	\subsection{Shortcut graph on blobs}
	\label{sec:proof-shortcut-graph}
	
	Condition on the retained tree blobs and their sizes
	\[
	\vw_n=(W_{n,1},W_{n,2},\ldots).
	\]
	For two distinct blobs \(i\ne j\), the retained shortcut layer connects them
	with probability
	\[
	\hat p_{ij}
	=
	1-\left(1-\frac{p\gl}{n}+O(n^{-2})\right)^{W_{n,i}W_{n,j}}
	=
	1-\exp\left\{
	-\frac{p\gl W_{n,i}W_{n,j}}{n}
	+o\left(\frac{W_{n,i}W_{n,j}}n\right)
	\right\}.
	\]
	The events are independent over unordered pairs of blobs, because they depend
	on disjoint sets of Erd\H{o}s--R\'enyi edges. Loops inside a blob do not change
	components.
	
	Thus, conditional on \(\vw_n\), the shortcut graph is a rank-one
	inhomogeneous random graph on blob vertices with weights \(W_{n,i}\) and
	kernel \(p\gl xy/n\). The mean number of new blob vertices reached by crossing
	a shortcut from a size-biased blob is
	\[
	p\gl\frac{\sum_i W_{n,i}^2}{n}
	\probc
	\frac{p\gl}{1-2p}
	=
	\rho(p,\gl).
	\]
	The breadth-first shortcut exploration behind the next lemma is summarized in
	Figure~\ref{fig:rank-one-exploration-proof}.
	
	\begin{figure}[t]
		\centering
		\begin{tikzpicture}[
			x=0.95cm,y=0.85cm,
			every node/.style={font=\small},
			seed/.style={circle,draw=teal!70!black,fill=teal!18,thick,minimum size=25pt},
			blob/.style={circle,draw=orange!75!black,fill=orange!15,thick,minimum size=#1},
			edge/.style={gray!70,densely dashed,thick}
			]
			\node[seed,label=below:{\(W\)}] (s) at (0,0) {};
			\node at (0,1.85) {generation \(0\)};
			
			\node[blob=19pt] (a1) at (2.5,0.75) {};
			\node[blob=15pt] (a2) at (2.55,-0.05) {};
			\node[blob=12pt] (a3) at (2.35,-0.85) {};
			\node at (2.5,1.85) {generation \(1\)};
			\node[below] at (2.5,-1.25) {\(\rho W+o(n^p)\)};
			
			\node[blob=14pt] (b1) at (5.0,0.55) {};
			\node[blob=11pt] (b2) at (5.1,-0.25) {};
			\node[blob=9pt] (b3) at (4.8,-0.8) {};
			\node at (5.0,1.85) {generation \(2\)};
			\node[below] at (5.0,-1.25) {\(\rho^2 W+o(n^p)\)};
			
			\node[blob=10pt] (c1) at (7.25,0.35) {};
			\node[blob=8pt] (c2) at (7.35,-0.45) {};
			\node at (7.25,1.85) {tail};
			\node[below] at (7.25,-1.25) {\(\rho^r W\)};
			
			\foreach \u in {a1,a2,a3} \draw[edge] (s) -- (\u);
			\foreach \u/\v in {a1/b1,a1/b2,a2/b2,a3/b3} \draw[edge] (\u) -- (\v);
			\foreach \u/\v in {b1/c1,b2/c1,b3/c2} \draw[edge] (\u) -- (\v);
			
			\draw[-{Latex[length=2.4mm]},thick] (0.65,1.05) -- node[above] {\(\rho<1\)} (1.85,1.05);
			\draw[-{Latex[length=2.4mm]},thick] (3.15,1.05) -- node[above] {\(\rho\)} (4.35,1.05);
			\draw[-{Latex[length=2.4mm]},thick] (5.65,1.05) -- node[above] {\(\rho\)} (6.75,1.05);
		\end{tikzpicture}
		\caption{Shortcut exploration from one large blob.  In the subcritical regime,
			successive shortcut generations have masses approximately
			\(W,\rho W,\rho^2W,\ldots\), giving the amplification factor
			\((1-\rho)^{-1}\).}
		\label{fig:rank-one-exploration-proof}
	\end{figure}
	
	\begin{lem}[Subcritical exploration from one large blob]
		\label{lem:subcritical-exploration}
		Assume \(p<p_c(\gl)\). Let \(i=i(n)\) be any blob index with
		\[
		W_{n,i}=O_{\PP}(n^p).
		\]
		Let \(F_{n,0}^{(i)}=\{i\}\), and for \(r\ge1\) let
		\(F_{n,r}^{(i)}\) be the set of blob indices first reached at shortcut
		distance \(r\) from \(\cB_{n,i}\) in the contracted shortcut graph.  Put
		\[
		A_{n,r}^{(i)}
		:=
		\sum_{j\in F_{n,r}^{(i)}}W_{n,j},
		\qquad r\ge0,
		\]
		so \(A_{n,0}^{(i)}=W_{n,i}\).  Then
		\[
		|\cC_{n,i}^{\full}|
		=
		\sum_{r\ge0} A_{n,r}^{(i)}
		=
		W_{n,i}+\sum_{r\ge1}A_{n,r}^{(i)},
		\]
		and
		\[
		A_{n,r}^{(i)}
		=
		\rho(p,\gl)^r W_{n,i}
		+
		o_{\PP}(n^p)
		\]
		for each fixed \(r\). Consequently
		\[
		|\cC_{n,i}^{\full}|
		=
		\frac{W_{n,i}}{1-\rho(p,\gl)}
		+
		o_{\PP}(n^p).
		\]
	\end{lem}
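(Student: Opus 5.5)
We run a generation-by-generation induction on $r$, conditional on the blob vector $\vw_n$, exploring the contracted shortcut graph breadth-first from $\cB_{n,i}$. The identity $|\cC_{n,i}^{\full}|=\sum_{r\ge0}A_{n,r}^{(i)}$ holds deterministically, since every blob in the component is reached at exactly one first-passage distance and blobs partition vertices. For the generation masses, let $\mathcal E_r=F^{(i)}_{n,0}\cup\cdots\cup F^{(i)}_{n,r}$ be the explored set, with mass $\sum_{s\le r}A^{(i)}_{n,s}=O_{\PP}(n^p)$ by induction. Conditional on the exploration up to generation $r$, each unexplored blob $j$ joins $F_{n,r+1}^{(i)}$ independently with probability
\[
\pi_j=1-\exp\Bigl\{-\tfrac{p\gl}{n}W_{n,j}A_{n,r}^{(i)}(1+o(1))\Bigr\},
\]
by independence of the shortcut edges between disjoint vertex sets; edges to blobs in $\mathcal E_{r-1}$ were examined earlier but concern only already-explored blobs. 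Hence
\[
\EE\bigl[A_{n,r+1}^{(i)}\mid\mathcal F_r\bigr]=\sum_{j\notin\mathcal E_r}W_{n,j}\pi_j .
\]

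The first key step is the conditional mean. Using $1-e^{-x}\le x$ gives the upper bound $p\gl A_{n,r}^{(i)}\, n^{-1}\sum_jW_{n,j}^2$. For the lower bound, we use $1-e^{-x}\ge x-x^2/2$, which yields an error of order
\[
n^{-2}(A^{(i)}_{n,r})^2\sum_jW_{n,j}^3=O_{\PP}\bigl(n^{2p-2}\,n^{\max\{1,3p\}+\eta}\bigr)=o_{\PP}(n^p),
\]
by Lemma~\ref{lem:fixed-size-family-counts}(b) and $p<1/2$. Removing the explored blobs $j\in\mathcal E_r$ costs at most $p\gl n^{-1}A^{(i)}_{n,r}\sum_{j\in\mathcal E_r}W_{n,j}^2$. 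This is $O_{\PP}(n^{p}\cdot n^{2p-1})=o_{\PP}(n^p)$ provided the explored blobs have total squared mass $O_{\PP}(n^{2p})$, which holds since their total mass is $O_{\PP}(n^p)$. Combined with Corollary~\ref{cor:backbone-susceptibility}, this gives
\[
\EE\bigl[A_{n,r+1}^{(i)}\mid\mathcal F_r\bigr]=\rho A^{(i)}_{n,r}+o_{\PP}(n^p).
\]

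The second step, concentration, is the main obstacle, because heavy-tailed blobs of size $\asymp n^p$ might be hit and destroy the law of large numbers. The conditional variance is at most
\[
\sum_j W_{n,j}^2\pi_j\le p\gl\, n^{-1}A^{(i)}_{n,r}\sum_jW_{n,j}^3=O_{\PP}\bigl(n^{p-1+\max\{1,3p\}+\eta}\bigr).
\]
When $p<1/3$ this is $o(n^{2p})$ after choosing $\eta$ small, and Chebyshev finishes. For $1/3\le p<1/2$ we truncate. We split blobs into those with $W_{n,j}\le\delta n^p$ and the top $L$ blobs, using Lemma~\ref{lem:fixed-size-family-counts}(d) with $r=3$ to make the cube mass outside the top $L$ at most $\eps n^{3p}$; this bounds the variance from non-top blobs by $\eps n^{2p}$ up to constants. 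Each fixed top blob $j\neq i$ is reached with probability $O_{\PP}(n^{2p-1})\to0$. Hence with high probability no top-$L$ blob other than $i$ itself appears in generations $1,\dots,r$, so no large blob is hit. Letting $L\to\infty$ after $n\to\infty$ gives $A^{(i)}_{n,r+1}=\rho^{r+1}W_{n,i}+o_{\PP}(n^p)$.

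Finally, we sum over generations and control the tail. The finite-$R$ sum gives $W_{n,i}(1-\rho^{R+1})/(1-\rho)+o_{\PP}(n^p)$. For $\sum_{r>R}A^{(i)}_{n,r}$, the upper-bound mean recursion $\EE[A_{n,r+1}\mid\mathcal F_r]\le \rho_n A_{n,r}$ holds with $\rho_n:=p\gl n^{-1}\sum_jW_{n,j}^2\to\rho<1$. On the event $\{\rho_n\le(1+\rho)/2\}$, which is measurable with respect to $\vw_n$, iterating gives
\[
\EE\Bigl[\sum_{r>R}A_{n,r}\,\Big|\,\vw_n\Bigr]\le W_{n,i}\bigl((1+\rho)/2\bigr)^{R+1}\cdot\frac{2}{1-\rho}.
\]
Markov's inequality then makes the tail at most $\eps n^p$ with high probability once $R$ is large. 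Letting $R\to\infty$ yields $|\cC_{n,i}^{\full}|=W_{n,i}/(1-\rho)+o_{\PP}(n^p)$.
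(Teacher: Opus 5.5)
Your proof is correct and follows essentially the same route as the paper. You condition on the blob sizes, expand the conditional mean with the cubic error term and the $O(n^{2p})$ cost of removing explored blobs, apply conditional Chebyshev, induct over fixed generations, and control the tail with the global union-bound recursion $\EE[A_{n,r+1}^{(i)}\mid\cdot\,]\le\rho_n A_{n,r}^{(i)}$ plus Markov.

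The truncation for $1/3\le p<1/2$ is unnecessary. There your variance bound is already $O_{\PP}(n^{4p-1+\eta})=o_{\PP}(n^{2p})$ once $\eta<1-2p$, so Chebyshev applies directly for all $p<1/2$, as in the paper. The appeal to Lemma~\ref{lem:fixed-size-family-counts}(d) can be dropped; it is not even available at $p=1/3$, where $3p=1$.
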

	
	\begin{proof}
		Write \(\beta=p\gl\) and \(\rho=\rho(p,\gl)\).  We condition on the retained
		tree blobs and their sizes.  All shortcut probabilities and expectations
		below are conditional on this weight sequence, and at the end we remove the
		conditioning using the high-probability estimates from the preceding
		subsection.
		
		We first record the one-step estimate used for fixed generations.  Let
		\(\mathcal H\) be the information revealed by a breadth-first exploration up
		to some generation: the explored set \(D\) of blob indices, the current
		frontier \(F\subset D\), and all shortcut edges queried so far.  Put
		\[
		a:=\sum_{u\in F}W_{n,u},
		\qquad
		d:=\sum_{u\in D}W_{n,u}.
		\]
		Suppose \(a=O(n^p)\) and \(d=O(n^p)\).  For an undiscovered blob
		\(j\notin D\), let
		\[
		q_j
		:=
		\PP(j\text{ is reached from }F\mid\mathcal H)
		=
		1-\prod_{u\in F}(1-\hat p_{uj}),
		\]
		where \(\hat p_{uj}\) is the shortcut connection probability between blobs
		\(u\) and \(j\).  On events whose probabilities tend to one,
		\[
		\max_jW_{n,j}=O(n^p),\qquad
		\frac1n\sum_jW_{n,j}^2\to\frac1{1-2p},
		\qquad
		\sum_jW_{n,j}^3
		=
		O\left(n^{\max\{1,3p\}+\eta}\right),
		\]
		where \(\eta>0\) will be chosen small.  Since \(p<1/2\),
		\[
		\frac{a}{n}\max_jW_{n,j}=o(1)
		\]
		on these events.  Uniformly for \(j\notin D\),
		\[
		\begin{aligned}
			q_j
			&=
			1-\prod_{u\in F}(1-\hat p_{uj})  \\
			&=
			\frac{\beta aW_{n,j}}{n}
			+
			O\left(\frac{a^2W_{n,j}^2}{n^2}\right)
			+
			o\left(\frac{aW_{n,j}}{n}\right).
		\end{aligned}
		\]
		Indeed,
		\(\hat p_{uj}=\beta W_{n,u}W_{n,j}/n+
		o(W_{n,u}W_{n,j}/n)\) uniformly because
		\(\max_{u,j}W_{n,u}W_{n,j}/n=o(1)\), and expanding
		\(1-\prod_{u\in F}(1-\hat p_{uj})\) gives the displayed second-order term.
		
		Let \(A_{\rm new}\) be the mass of the next frontier.  Conditional on
		\(\mathcal H\),
		\[
		A_{\rm new}
		=
		\sum_{j\notin D}W_{n,j}\xi_j,
		\qquad
		\PP(\xi_j=1\mid\mathcal H)=q_j,
		\]
		where the indicators \((\xi_j)_{j\notin D}\) are conditionally independent.
		Therefore
		\[
		\begin{aligned}
			\EE[A_{\rm new}\mid\mathcal H]
			&=
			\frac{\beta a}{n}\sum_{j\notin D}W_{n,j}^2
			+
			O\left(
			\frac{a^2}{n^2}\sum_jW_{n,j}^3
			\right)
			+
			o(a).
		\end{aligned}
		\]
		Removing the already discovered blobs changes the second-moment sum by only
		\[
		\sum_{j\in D}W_{n,j}^2
		\le
		\left(\max_jW_{n,j}\right)d
		=
		O(n^{2p})
		=
		o(n).
		\]
		The second-order term is
		\[
		O\left(
		n^{2p-2+\max\{1,3p\}+\eta}
		\right)
		=
		o(n^p)
		\]
		after choosing \(\eta>0\) small enough, since \(p<1/2\).  By
		Corollary~\ref{cor:backbone-susceptibility},
		\[
		\EE[A_{\rm new}\mid\mathcal H]
		=
		\rho a+o(n^p).
		\]
		Similarly,
		\[
		\begin{aligned}
			\Var(A_{\rm new}\mid\mathcal H)
			&=
			\sum_{j\notin D}W_{n,j}^2q_j(1-q_j)  \\
			&\le
			C\frac{a}{n}\sum_jW_{n,j}^3
			=
			o(n^{2p}),
		\end{aligned}
		\]
		again with \(\eta>0\) small and \(a=O(n^p)\); here we used
		\[
		q_j
		\le
		\sum_{u\in F}\hat p_{uj}
		\le
		C\frac{aW_{n,j}}n
		\]
		on the same high-probability event.  Chebyshev's inequality gives
		\[
		A_{\rm new}
		=
		\rho a+o_{\PP}(n^p).
		\]
		
		Apply this estimate to the breadth-first frontiers
		\[
		D_{n,r}^{(i)}:=\bigcup_{s=0}^rF_{n,s}^{(i)},
		\qquad
		A_{n,r}^{(i)}=\sum_{j\in F_{n,r}^{(i)}}W_{n,j}.
		\]
		Since \(A_{n,0}^{(i)}=W_{n,i}=O_{\PP}(n^p)\), induction over any fixed
		number of generations shows that the current frontier mass and explored mass
		remain \(O_{\PP}(n^p)\).  Hence, for each fixed \(r\ge1\),
		\[
		A_{n,r}^{(i)}
		=
		\rho A_{n,r-1}^{(i)}+o_{\PP}(n^p),
		\]
		and another induction yields
		\[
		A_{n,r}^{(i)}
		=
		\rho^rW_{n,i}+o_{\PP}(n^p).
		\]
		
		It remains to make the passage from a fixed number of generations to the
		whole component.  Choose \(\delta>0\) with \(\rho+\delta<1\), and choose
		\(\alpha>0\) so small that \(p+\alpha<1/2\).  Let \(\mathcal H_r\) be the
		exploration history through generation \(r\).  On the same high-probability
		events, the preceding expectation expansion, now with discovered mass at most
		\(n^{p+\alpha}\), implies the stopped recursion
		\[
		\EE[A_{n,r+1}^{(i)}\mid\mathcal H_r]
		\le
		(\rho+\delta)A_{n,r}^{(i)}
		\]
		whenever the mass discovered through generation \(r\) is at most
		\(n^{p+\alpha}\).  Indeed,
		\[
		\frac{A_{n,r}^{(i)}}n\max_jW_{n,j}
		\le
		n^{p+\alpha-1}\max_jW_{n,j}
		=
		o(1),
		\]
		and removing the discovered set changes
		\(\sum_jW_{n,j}^2/n\) by at most
		\(n^{p+\alpha}\max_jW_{n,j}/n=o(1)\).  Also, after division by
		\(A_{n,r}^{(i)}\), the second-order expectation term is bounded by
		\[
		n^{p+\alpha-2+\max\{1,3p\}+\eta}
		=
		o(1)
		\]
		for \(\eta>0\) small enough.  Thus the conditional mean is at most
		\((\rho+\delta)A_{n,r}^{(i)}\) throughout the stopped range.
		
		For the tail itself, the union-bound part of the same estimate gives a
		global recursion, on these weight events and for all generations.  Here we
		upper bound the mass of the next frontier by summing all possible hits from
		the current frontier, so collisions and already discovered blobs can only
		decrease the actual new mass:
		\[
		\EE[A_{n,r+1}^{(i)}\mid\mathcal H_r]
		\le
		(\rho+\delta)A_{n,r}^{(i)},
		\]
		because
		\[
		q_j\le\sum_{u\in F_{n,r}^{(i)}}\hat p_{uj}
		\le
		(1+o(1))\frac{\beta A_{n,r}^{(i)}W_{n,j}}{n}
		\]
		uniformly and
		\(\beta n^{-1}\sum_jW_{n,j}^2\le\rho+\delta/2\) for all large \(n\) on the
		event.  Iterating gives, on this weight event, the conditional tail bound
		\[
		\EE\left[
		\sum_{r>R}A_{n,r}^{(i)}
		\,\middle|\,(W_{n,j})_j
		\right]
		\le
		C W_{n,i}(\rho+\delta)^R,
		\]
		with \(C=(\rho+\delta)/(1-\rho-\delta)\), say.  The same recursion also gives
		\[
		\PP\left(
		\sup_r\sum_{s=0}^rA_{n,s}^{(i)}>n^{p+\alpha}
		\,\middle|\,(W_{n,j})_j
		\right)
		\le
		C W_{n,i}n^{-p-\alpha}=o(1)
		\]
		whenever \(W_{n,i}=O(n^p)\), so the exceptional large-discovery event is
		negligible.
		
		By Markov's inequality, after unconditioning and using that the weight event
		has probability tending to one,
		\[
		\sum_{r>R}A_{n,r}^{(i)}
		=
		O_{\PP}\left(W_{n,i}(\rho+\delta)^R\right)+o_{\PP}(n^p).
		\]
		For fixed \(R\),
		\[
		\sum_{r=0}^R A_{n,r}^{(i)}
		=
		W_{n,i}\sum_{r=0}^R\rho^r+o_{\PP}(n^p).
		\]
		Since \(W_{n,i}=O_{\PP}(n^p)\), first let \(n\to\infty\) and then
		\(R\to\infty\).  The geometric tails with ratios \(\rho\) and
		\(\rho+\delta\) vanish, and therefore
		\[
		|\cC_{n,i}^{\full}|
		=
		\sum_{r\ge0}A_{n,r}^{(i)}
		=
		\frac{W_{n,i}}{1-\rho}+o_{\PP}(n^p).
		\]
	\end{proof}
	
	The preceding lemma controls the shortcut cloud grown from a fixed leading
	blob.  To prove the main subcritical blob theorem, we also need a uniform
	bound showing that components seeded only by lower-ranked blobs cannot reach
	the \(n^p\)-scale of the leading blobs.  For that comparison, we first define
	a truncated branching process and record its tail estimate.
	
	\paragraph{\bf Truncated marked branching process.}
	Let \(\bar\beta>0\), let \((w_{n,i})_{i\ge1}\) be deterministic positive
	weights, and fix a cutoff \(x>0\).  Put
	\[
	I_x:=\{i:w_{n,i}\le x\}.
	\]
	For a root index \(a\in I_x\), let \(\mathcal T^{(x)}(a)\) be the following
	multitype Galton--Watson process.  The root particle has type \(a\) and mark
	\(w_{n,a}\).  Every other particle also has a type in \(I_x\); a particle of
	type \(b\) has mark \(w_{n,b}\), and gives, independently for every
	\(j\in I_x\), a Poisson number of children of type \(j\) with mean
	\[
	\frac{\bar\beta w_{n,b}w_{n,j}}{n}.
	\]
	Let \(M(v)\) be the mark of particle \(v\), and define the total progeny mass
	\[
	T^{(x)}(a)
	:=
	\sum_{v\in\mathcal T^{(x)}(a)}M(v),
	\]
	including the root contribution \(w_{n,a}\).  Thus all marks used by the
	process are entries of the weight array, and the cutoff \(x\) only restricts
	the allowed child types.
	
	\begin{lem}[Exponential rank-truncated progeny bound]
		\label{lem:marked-progeny-tail}
		Assume that, for some \(\bar\nu<1\),
		\[
		\frac{\bar\beta}{n}
		\sum_{i\in I_x}w_{n,i}^2
		\le \bar\nu.
		\]
		There exist constants \(c,\theta>0\), depending only on \(\bar\nu\), such
		that, for every \(a\in I_x\),
		\begin{equation}
			\label{eq:marked-progeny-exponential}
			\EE\exp\left\{\theta T^{(x)}(a)/x\right\}
			\le \exp\{cw_{n,a}/x\}.
		\end{equation}
		Consequently, for every \(r>0\), there is \(C_r<\infty\), depending only on
		\(r\) and \(\bar\nu\), such that whenever \(a\in I_x\) satisfies
		\(w_{n,a}=x\),
		\begin{equation}
			\label{eq:marked-progeny-polynomial}
			\PP\left(T^{(x)}(a)>t\right)
			\le C_r\left(\frac{x}{t}\right)^r,
			\qquad t>0,
		\end{equation}
		uniformly in \(n\) and in all weight arrays satisfying the displayed
		second-moment bound.
	\end{lem}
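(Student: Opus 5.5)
The plan is to prove \eqref{eq:marked-progeny-exponential} by induction over generations, using the Poisson branching recursion together with a convexity bound that exploits the cutoff \(w_{n,j}\le x\) for every allowed type. The tail bound \eqref{eq:marked-progeny-polynomial} will then follow from an exponential Markov inequality.

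\emph{Generation truncation.} For \(k\ge0\) let \(T_k^{(x)}(b)\) be the total mark of the particles of \(\mathcal T^{(x)}(b)\) in generations \(0,\dots,k\). Then \(T_0^{(x)}(b)=w_{n,b}\). By the branching property,
\[
T_{k+1}^{(x)}(b)=w_{n,b}+\sum_{j\in I_x}\sum_{\ell=1}^{P_{bj}}T_{k}^{(x),\ell}(j),
\]
where the \(P_{bj}\sim\Poi(\bar\beta w_{n,b}w_{n,j}/n)\) are independent, and the \(T_k^{(x),\ell}(j)\) are independent copies of \(T_k^{(x)}(j)\), independent of the \(P_{bj}\). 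The Poisson generating function then gives
\[
\EE e^{\theta T_{k+1}^{(x)}(b)/x}
=\exp\Big\{\frac{\theta w_{n,b}}x+\sum_{j\in I_x}\frac{\bar\beta w_{n,b}w_{n,j}}n\big(\EE e^{\theta T_k^{(x)}(j)/x}-1\big)\Big\}.
\]
Since \(T_k^{(x)}(b)\uparrow T^{(x)}(b)\) as \(k\to\infty\), monotone convergence reduces the claim to a bound \(\EE e^{\theta T_k^{(x)}(b)/x}\le e^{cw_{n,b}/x}\) that holds for all \(k\) and all \(b\in I_x\), with \(c,\theta\) independent of \(k\). This also avoids having to assume in advance that the progeny is finite.

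\emph{Inductive step.} Take the hypothesis \(\EE e^{\theta T_k^{(x)}(j)/x}\le e^{cw_{n,j}/x}\) for all \(j\in I_x\); the base case \(k=0\) holds whenever \(\theta\le c\). For \(j\in I_x\) we have \(y_j:=w_{n,j}/x\in(0,1]\), and convexity of \(y\mapsto e^{cy}\) gives \(e^{cy_j}-1\le y_j(e^c-1)\). Combining this with the second-moment hypothesis on \(I_x\), the exponent above is at most
\[
\frac{w_{n,b}}x\Big(\theta+(e^c-1)\frac{\bar\beta}{n}\sum_{j\in I_x}w_{n,j}^2\Big)\le\frac{w_{n,b}}x\big(\theta+\bar\nu(e^c-1)\big).
\]
So the induction closes as soon as \(\theta+\bar\nu(e^c-1)\le c\). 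Because \((e^c-1)/c\to1\) as \(c\downarrow0\) and \(\bar\nu<1\), I would fix \(c=c(\bar\nu)>0\) small enough that \(\bar\nu(e^c-1)\le\frac{1+\bar\nu}2c\), and then set \(\theta=\frac{1-\bar\nu}2c\). Then \(\theta\le c\) and \(\theta+\bar\nu(e^c-1)\le c\), and both constants depend only on \(\bar\nu\). This proves \eqref{eq:marked-progeny-exponential} uniformly in \(n\), in the weight array, and in \(a\in I_x\).

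\emph{Polynomial tail.} If \(w_{n,a}=x\), Markov's inequality gives \(\PP(T^{(x)}(a)>t)\le e^{c}e^{-\theta t/x}\). Since \(\sup_{s>0}s^re^{-\theta s}=C'_r(\bar\nu)<\infty\), setting \(s=t/x\) yields \(\PP(T^{(x)}(a)>t)\le e^cC'_r\,(x/t)^r\), which is \eqref{eq:marked-progeny-polynomial}.

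The only delicate point is the choice of constants in the inductive step. The cutoff \(w_{n,j}\le x\) is what keeps every child's exponent \(cw_{n,j}/x\) inside \([0,c]\), so that the linearization \(e^{cy}-1\le y(e^c-1)\) costs only a factor \((e^c-1)/c\) close to \(1\). This is exactly why the bound is stated at scale \(x\), and it is where the truncation to \(I_x\) is used.
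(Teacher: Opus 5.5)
Your proposal is correct and follows essentially the same route as the paper: induction over generations via the Poisson generating functional, the convexity bound \(e^{cu}-1\le u(e^c-1)\) on \([0,1]\) that the cutoff makes available, monotone convergence in the generation index, and an exponential Markov bound for the polynomial tail. Your choice \(\theta=\tfrac{1-\bar\nu}{2}c\) differs only cosmetically from the paper's \(\theta=c-\bar\nu(e^c-1)\). The one structural difference is that the paper first restricts to finite type sets \(J\subset I_x\) and then lets \(J\uparrow I_x\), whereas you apply the generating functional directly over the possibly countable set \(I_x\); that step is also justified, by independence and monotone convergence.
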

	
	\begin{proof}
		Choose \(c>0\) so small that
		\[
		\bar\nu(e^c-1)<c,
		\]
		and set
		\[
		\theta:=c-\bar\nu(e^c-1)>0.
		\]
		First restrict the process to a finite set \(J\subset I_x\) of admissible
		child types, omitting all child types outside \(J\).  For a root index
		\(a\in I_x\), let \(\mathcal T_J^{(x)}(a)\) be the resulting genealogical
		tree, write \(T_J^{(x)}(a)\) for its total progeny mass, and put
		\[
		T_{J,m}^{(x)}(a)
		:=
		\sum_{v\in\mathcal T_J^{(x)}(a):\, |v|\le m}M(v),
		\]
		where \(|v|\) is the generation of \(v\).  We prove by induction on \(m\)
		that, uniformly in the finite set \(J\),
		\begin{equation}
			\label{eq:finite-generation-mgf}
			\EE\exp\left\{\theta T_{J,m}^{(x)}(a)/x\right\}
			\le \exp\{cw_{n,a}/x\},
			\qquad a\in I_x.
		\end{equation}
		For \(m=0\), this follows from \(\theta<c\).  Suppose it holds through
		generation \(m\).  The Poisson generating functional gives
		\begin{align*}
			&\log \EE\exp\left\{\theta T_{J,m+1}^{(x)}(a)/x\right\} \\
			&\qquad=
			\frac{\theta w_{n,a}}{x}
			+\frac{\bar\beta w_{n,a}}{n}
			\sum_{j\in J}w_{n,j}
			\left(
			\EE e^{\theta T_{J,m}^{(x)}(j)/x}-1
			\right) \\
			&\qquad\le
			\frac{\theta w_{n,a}}{x}
			+\frac{\bar\beta w_{n,a}}{n}
			\sum_{j\in J}w_{n,j}
			\left(e^{cw_{n,j}/x}-1\right).
		\end{align*}
		For \(0\le u\le1\), convexity gives
		\(e^{cu}-1\le u(e^c-1)\).  Therefore the last display is at most
		\[
		\frac{w_{n,a}}{x}\left[
		\theta
		+\frac{\bar\beta}{n}
		\sum_{j\in J}w_{n,j}^2(e^c-1)
		\right]
		\le
		\frac{w_{n,a}}{x}\big[\theta+\bar\nu(e^c-1)\big]
		=\frac{cw_{n,a}}{x}.
		\]
		This proves \eqref{eq:finite-generation-mgf}.  Since
		\(T_{J,m}^{(x)}(a)\uparrow T_J^{(x)}(a)\), monotone convergence gives
		\[
		\EE\exp\left\{\theta T_J^{(x)}(a)/x\right\}
		\le \exp\{cw_{n,a}/x\}
		\]
		for every finite \(J\subset I_x\) and \(a\in I_x\), with constants
		independent of \(J\).
		Choose finite sets \(J_k\uparrow I_x\).  Under the natural coupling using the
		same independent Poisson variables,
		\[
		T_{J_k}^{(x)}(a)\uparrow T^{(x)}(a).
		\]
		Another application of monotone convergence proves
		\eqref{eq:marked-progeny-exponential} for the full countable-type process.
		If \(w_{n,a}=x\), then \eqref{eq:marked-progeny-exponential} gives
		\[
		\PP(T^{(x)}(a)>t)
		\le e^c e^{-\theta t/x}.
		\]
		For every \(r>0\), the elementary bound
		\(e^{-u}\le C_r u^{-r}\) for \(u>0\), with the constant enlarged when
		\(t<x\), gives \eqref{eq:marked-progeny-polynomial}.
	\end{proof}
	
	We next turn the preceding progeny estimate into a uniform comparison theorem
	for subcritical rank-one graphs.  The point of this abstraction is that the
	one-seed exploration above controls the component grown from a prescribed
	large blob, while the proof of the top-component ordering also needs a bound
	that holds simultaneously over all components which avoid the first few blob
	ranks.  After the leading weights are removed, every remaining component can
	be rooted at its smallest-ranked vertex and dominated by the truncated
	branching process with cutoff equal to that root weight.  The resulting union
	bound is the mechanism which prevents lower-ranked shortcut components from
	reaching the \(n^p\) scale of the leading blobs.
	
	\paragraph{\bf Rank-one comparison setup.}
	Fix \(p\in(0,1/2)\), \(\beta>0\), and \(\nu<1\).  For each \(n\), let
	\([N_n]\) be a finite vertex set and let
	\[
	w_{n,1}\ge w_{n,2}\ge\cdots\ge w_{n,N_n}>0,
	\qquad
	\sum_{i=1}^{N_n}w_{n,i}=n,
	\]
	be deterministic integer weights.  Let \(\cG_n\) be a graph on \([N_n]\)
	with independent edges, and for a component \(\cD\) of \(\cG_n\) write
	\[
	w(\cD):=\sum_{i\in\cD}w_{n,i}.
	\]
	The edge bound is the following uniform upper bound: there is
	\(\varepsilon_n\to0\) such that, whenever \(w_{n,i}w_{n,j}=o(n)\),
	\[
	\PP(i\leftrightarrow j)
	\le
	1-\exp\left\{
	-\frac{(1+\varepsilon_n)\beta w_{n,i}w_{n,j}}{n}
	\right\},
	\qquad i\ne j.
	\]
	The maximum-weight bound is
	\[
	\max_{1\le i\le N_n}w_{n,i}=O(n^p),
	\]
	and the susceptibility bound is
	\[
	\frac{\beta}{n}\sum_{i=1}^{N_n}w_{n,i}^2\le \nu
	\qquad\text{for all large }n.
	\]
	We use the convention that a maximum over an empty family of components is
	\(0\).
	
	\begin{thm}[Conditional subcritical rank-one bound]
		\label{thm:conditional-rank-one-bound}
		Assume the rank-one comparison setup satisfies the edge bound, the
		maximum-weight bound, and the susceptibility bound.  Then, for every
		\(r>0\), there is \(C_r<\infty\) such that, for every \(L\ge0\), every
		\(t>0\), and all sufficiently large \(n\),
		\begin{equation}
			\label{eq:conditional-rank-one-tail}
			\PP\left(
			\max_{\cD:\,\cD\cap[L]=\eset}w(\cD)>t
			\right)
			\le
			C_r t^{-r}\sum_{i>L}w_{n,i}^r.
		\end{equation}
		In particular, if, for a fixed \(L\) with \(L+1\le N_n\) eventually and
		some \(r_L>0\),
		\[
		\sum_{i>L}
		\left(\frac{w_{n,i}}{w_{n,L+1}}\right)^{r_L}
		\le C_L
		\qquad\text{for all large }n,
		\]
		then
		\[
		\max_{\cD:\,\cD\cap[L]=\eset}w(\cD)
		=O_{\PP}(w_{n,L+1}).
		\]
		Finally, suppose the weights are random.  Assume there are events \(E_n\),
		measurable with respect to the weights, with \(\PP(E_n)\to1\), such that on
		\(E_n\), conditional on the realized weights, the deterministic rank-one
		comparison setup above holds with the same constants \(p,\beta,\nu\) and with
		a deterministic error sequence \(\varepsilon_n\to0\) in the edge bound.  If,
		for some \(r>1/p\),
		\begin{equation}
			\label{eq:conditional-rank-one-tail-assumption}
			\lim_{L\to\infty}\limsup_{n\to\infty}
			\PP\left(
			n^{-pr}\sum_{i>L}w_{n,i}^r>\eps
			\right)=0
			\qquad\text{for every }\eps>0.
		\end{equation}
		then
		\[
		\lim_{L\to\infty}\limsup_{n\to\infty}
		\PP\left(
		n^{-p}
		\max_{\cD:\,\cD\cap[L]=\eset}w(\cD)>\eps
		\right)=0
		\]
		for every \(\eps>0\).
	\end{thm}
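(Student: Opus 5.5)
The plan is to root every component $\cD$ avoiding $[L]$ at its lowest-ranked vertex, i.e.\ the vertex $a=\min\cD$ of largest weight; ties are broken by index. Then $\cD\subset\{a,a+1,\dots\}$, and every vertex $b\in\cD$ satisfies $w_{n,b}\le w_{n,a}$. A union bound over the possible roots gives
\[
\PP\Bigl(\max_{\cD\cap[L]=\eset}w(\cD)>t\Bigr)\le\sum_{a>L}\PP\bigl(w(\cD_a^{\ge a})>t\bigr),
\]
where $\cD_a^{\ge a}$ is the component of $a$ in the graph induced on the vertices $\{b\ge a\}$. This is legitimate: if $\cD$ has minimum $a$, then $\cD$ is exactly that induced component. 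It therefore suffices to show $\PP(w(\cD_a^{\ge a})>t)\le C_r(w_{n,a}/t)^r$ uniformly in $a$ and in large $n$.

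To obtain this, I will dominate the breadth-first exploration of $\cD_a^{\ge a}$ by the truncated marked branching process $\mathcal T^{(x)}(a)$ of Lemma~\ref{lem:marked-progeny-tail}, with cutoff $x=w_{n,a}$ and $I_x\supseteq\{b\ge a\}$. Each queried edge $\{u,j\}$ is present with probability at most $1-\exp\{-(1+\varepsilon_n)\beta w_{n,u}w_{n,j}/n\}$. This is exactly the probability that a Poisson variable with mean $(1+\varepsilon_n)\beta w_{n,u}w_{n,j}/n$ is positive, so there is a standard coupling in which the explored component maps injectively into the Galton--Watson tree with $\bar\beta=(1+\varepsilon_n)\beta$. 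Under this coupling the component mass is at most $T^{(x)}(a)$, since the extra children only add mass.

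The edge bound is stated only when $w_{n,i}w_{n,j}=o(n)$. This is guaranteed because $\max w=O(n^p)$ with $p<1/2$ gives $w_{n,i}w_{n,j}=O(n^{2p})=o(n)$ uniformly. The hypothesis of Lemma~\ref{lem:marked-progeny-tail} holds with $\bar\nu=(1+\varepsilon_n)\nu\le(1+\nu)/2<1$ for large $n$, since $\sum_{i\in I_x}w_{n,i}^2\le\sum_i w_{n,i}^2$. Applying \eqref{eq:marked-progeny-polynomial} with $w_{n,a}=x$ then gives the per-root bound with constants depending only on $r$ and $\nu$. Summing over $a>L$ proves \eqref{eq:conditional-rank-one-tail}.

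The second assertion follows by taking $t=Mw_{n,L+1}$. The bound becomes $C_{r_L}M^{-r_L}\sum_{i>L}(w_{n,i}/w_{n,L+1})^{r_L}\le C_{r_L}C_LM^{-r_L}$, which tends to $0$ as $M\to\infty$.

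For the random-weight statement, I will work conditionally on the weights and on $E_n$. The conditional constants in \eqref{eq:conditional-rank-one-tail} are deterministic, because they depend only on $r$ and $\nu$ and on the deterministic sequence $\varepsilon_n$ through "$n$ large". Taking $t=\eps n^p$ and $r>1/p$ gives
\[
\PP(\cdot\mid\text{weights})\le C_r\eps^{-r}\,n^{-pr}\sum_{i>L}w_{n,i}^r\quad\text{on }E_n.
\]
Let $\delta>0$ be arbitrary. Split on whether $n^{-pr}\sum_{i>L}w_{n,i}^r\le\delta$: on that event the conditional probability is at most $C_r\eps^{-r}\delta$, and the complement has probability controlled by \eqref{eq:conditional-rank-one-tail-assumption}. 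Add $\PP(E_n^c)\to0$, let $n\to\infty$, then $L\to\infty$, then $\delta\to0$.

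I expect the main obstacle to be the coupling step. The exploration must be restricted to indices $\ge a$, and one must check that the conditional independence of unqueried edges in the exploration is compatible with dominating every edge by an independent Poisson variable. The branching process allows repeated types while the graph does not, so the required domination is the standard "exploration is a subtree of the multitype GW tree" argument; I would state it carefully for countably many types via finite truncations, as in the proof of Lemma~\ref{lem:marked-progeny-tail}.
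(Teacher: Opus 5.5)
Your proposal is correct and follows essentially the same route as the paper. You root each component avoiding $[L]$ at its smallest index, dominate the induced exploration on $\{a,a+1,\dots\}$ by the marked branching process of Lemma~\ref{lem:marked-progeny-tail} with cutoff $x=w_{n,a}$, union-bound the polynomial tails, and then take $t=Mw_{n,L+1}$ or $t=\eps n^p$; the only cosmetic difference is that you use the $n$-dependent $\bar\beta=(1+\varepsilon_n)\beta$ with fixed $\bar\nu=(1+\nu)/2$, where the paper fixes $\bar\beta=(1+\delta)\beta$ with $(1+\delta)\nu<1$, and both give constants depending only on $r$ and $\nu$.
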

	
	\begin{proof}
		Choose \(\delta>0\) so small that
		\(\bar\nu:=(1+\delta)\nu<1\), and put
		\(\bar\beta=(1+\delta)\beta\).  By the maximum-weight bound,
		\[
		\max_i w_{n,i}^2=O(n^{2p})=o(n),
		\]
		so \(w_{n,i}w_{n,j}=o(n)\) uniformly over all pairs \(i\ne j\).  Let
		\(\varepsilon_n\to0\) be the uniform error in the edge bound.  For all large
		\(n\), \((1+\varepsilon_n)\beta\le\bar\beta\), and therefore
		\[
		\PP(i\leftrightarrow j)
		\le
		1-\exp\left\{-\frac{\bar\beta w_{n,i}w_{n,j}}{n}\right\},
		\qquad i\ne j.
		\]
		Since the graph edges are independent, we may couple each graph edge below
		the event that an independent Poisson variable of mean
		\[
		\frac{\bar\beta w_{n,i}w_{n,j}}{n}
		\]
		is positive.
		
		Fix \(i\), set \(S_i:=\{i,i+1,\ldots,N_n\}\), and let \(\cD_i(S_i)\) denote
		the component of \(i\) in the graph induced by \(S_i\).  Put
		\(x:=w_{n,i}\).  Because the weights are nonincreasing,
		\[
		S_i\subseteq I_x:=\{j:w_{n,j}\le x\},
		\]
		so every available target in this exploration has weight at most \(x\).  In
		the Poisson-coupled graph, an edge discovered through a positive Poisson
		variable is dominated by the corresponding Poisson number of offspring.  The
		induced exploration only uses labels in \(S_i\), while the branching process
		allows all labels in \(I_x\).  If repeated labels, collisions with already
		seen vertices, and multiple Poisson offspring are all retained as distinct
		particles, the total progeny mass can only increase.  Thus the induced
		component mass is stochastically dominated by the marked branching process of
		Lemma~\ref{lem:marked-progeny-tail} with cutoff \(x=w_{n,i}\), root index
		\(i\), and parameter \(\bar\beta\).  Its second-moment condition follows from
		the susceptibility bound:
		\[
		\frac{\bar\beta}{n}
		\sum_{j:\,w_{n,j}\le x}w_{n,j}^2
		\le
		\frac{(1+\delta)\beta}{n}\sum_{j=1}^{N_n}w_{n,j}^2
		\le \bar\nu<1.
		\]
		Consequently, for every \(r>0\),
		\begin{equation}
			\label{eq:tail-seed-component}
			\PP\left(
			w\big(\cD_i(S_i)\big)>t
			\right)
			\le C_r\left(\frac{w_{n,i}}{t}\right)^r.
		\end{equation}
		
		Every component avoiding \([L]\) has a unique smallest index \(i>L\).  For
		that index, the component is contained in the induced component of \(i\) on
		\(S_i\).  A union bound over \(i>L\) in
		\eqref{eq:tail-seed-component} proves
		\eqref{eq:conditional-rank-one-tail}.  For the fixed-\(L\) conclusion,
		apply \eqref{eq:conditional-rank-one-tail} with \(r=r_L\) and
		\(t=Aw_{n,L+1}\).  This gives
		\[
		\PP\left(
		\max_{\cD:\,\cD\cap[L]=\eset}w(\cD)
		>Aw_{n,L+1}
		\right)
		\le C_{r_L} C_L A^{-r_L},
		\]
		and the right side tends to zero as \(A\to\infty\).
		
		For the final assertion, condition on the weights on the high-probability
		event where the setup bounds hold, and set \(t=\eps n^p\) in
		\eqref{eq:conditional-rank-one-tail}.  Including the exceptional event, the
		resulting unconditional probability is at most
		\[
		o(1)
		+\EE\left[
		1\wedge
		\left(
		C_r\eps^{-r}n^{-pr}\sum_{i>L}w_{n,i}^r
		\right)
		\right].
		\]
		Put \(B_\eps:=C_r\eps^{-r}\) and
		\[
		X_{n,L}:=n^{-pr}\sum_{i>L}w_{n,i}^r.
		\]
		For every \(\eta>0\),
		\[
		\EE[1\wedge B_\eps X_{n,L}]
		\le \eta+\PP(B_\eps X_{n,L}>\eta).
		\]
		The ranked-tail assumption
		\eqref{eq:conditional-rank-one-tail-assumption} makes the probability on the
		right vanish after first \(n\to\infty\) and then \(L\to\infty\).  Since
		\(\eta>0\) is arbitrary, the desired double limit follows.
	\end{proof}
	
	\begin{lem}[Subcritical maximal component bound]
		\label{lem:subcritical-maximal-bound}
		Let \(\Gamma_n\) denote the contracted shortcut graph on blob labels.  If
		\(D\) is a component of \(\Gamma_n\), write
		\[
		\cC(D):=\bigcup_{j\in D}\cB_{n,j},
		\qquad
		|\cC(D)|=\sum_{j\in D}W_{n,j},
		\]
		for the corresponding full component.  All maxima over \(D\) below are over
		components of \(\Gamma_n\).  Then
		\[
		\lim_{L\to\infty}\limsup_{n\to\infty}
		\PP\left(
		n^{-p}
		\max_{D:\,D\cap[L]=\eset}
		|\cC(D)|>\eps
		\right)=0
		\]
		for every \(\eps>0\).
	\end{lem}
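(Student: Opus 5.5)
The plan is to deduce the lemma from the final (random-weight) assertion of Theorem~\ref{thm:conditional-rank-one-bound}, applied with vertex set the blob labels, weights \(w_{n,i}=W_{n,i}\) (already ranked in nonincreasing order, with \(\sum_iW_{n,i}=n\)), \(\beta=p\gl\), and graph \(\Gamma_n\). For a component \(D\), \(w(D)=|\cC(D)|\), so the conclusion of the theorem is exactly the claimed double limit. Two things must be checked: that the deterministic rank-one comparison setup holds on a high-probability event \(E_n\) measurable with respect to the weights, and that the ranked-tail assumption \eqref{eq:conditional-rank-one-tail-assumption} holds for some \(r>1/p\).

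\emph{The setup on \(E_n\).} Conditional on the blob partition, the edges of \(\Gamma_n\) are independent because they depend on disjoint sets of shortcut edges. The connection probability is exactly \(1-(1-p\gl/n)^{W_{n,i}W_{n,j}}\). Since \(-\log(1-y)\le y(1+y)\) for small \(y\), this is at most \(1-\exp\{-(1+\varepsilon_n)p\gl W_{n,i}W_{n,j}/n\}\) with \(\varepsilon_n=p\gl/n\to0\), which is deterministic. Fix \(\nu\) with \(\rho(p,\gl)<\nu<1\); this is possible since \(p<p_c(\gl)\). Let \(E_n\) be the event
\[
\max_iW_{n,i}\le A n^p \quad\text{and}\quad \frac{p\gl}{n}\sum_iW_{n,i}^2\le\nu .
\]
By Corollary~\ref{cor:backbone-susceptibility}, the second condition has probability tending to one. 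By Proposition~\ref{prop:rrt-blob-input}(b), \(W_{n,1}=O_{\PP}(n^p)\). The maximum-weight bound in the setup is stated as \(O(n^p)\) with a fixed constant, so I would take \(A\) large, apply the theorem with that \(A\), and absorb \(\PP(W_{n,1}>An^p)\) into the error before letting \(A\to\infty\). The honest way to arrange this is to note that the constant \(C_r\) in the theorem depends on the maximum bound only through the requirement that \(w_iw_j=o(n)\) uniformly. This holds for any fixed \(A\) because \(p<1/2\), so \(A\) does not enter the final bound.

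\emph{The tail assumption.} Choose any \(r>1/p\). Lemma~\ref{lem:fixed-size-family-counts}(d) gives precisely
\[
\lim_{L\to\infty}\limsup_{n\to\infty}\PP\left(n^{-pr}\sum_{i>L}W_{n,i}^r>\eps\right)=0 ,
\]
which is \eqref{eq:conditional-rank-one-tail-assumption}. The theorem then yields the lemma, after adding \(\PP(E_n^c)\to0\).

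The main obstacle is bookkeeping rather than probability. It lies in making sure the theorem's randomization step is legitimate: the constants \(p,\beta,\nu\) and the error sequence \(\varepsilon_n\) must be uniform on \(E_n\), and the maximum-weight constant must be handled. I expect both to go through as described, since the exponential bound of Lemma~\ref{lem:marked-progeny-tail} only uses the susceptibility bound \(\bar\nu<1\).
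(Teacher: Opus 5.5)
Your proposal is correct and is essentially the paper's proof. Both verify the rank-one comparison setup conditionally on the blob sizes, using the edge bound with \(\beta=p\gl\) and the susceptibility bound from Corollary~\ref{cor:backbone-susceptibility}, and then feed the ranked tail of Lemma~\ref{lem:fixed-size-family-counts}(d), with \(r>1/p\), into Theorem~\ref{thm:conditional-rank-one-bound}; the paper applies \eqref{eq:conditional-rank-one-tail} directly at \(t=\eps n^p\) and takes expectations, where you quote the packaged random-weight assertion. Your handling of the maximum-weight constant is more careful than the paper's. In fact, because your bound \(1-(1-p\gl/n)^{W_{n,i}W_{n,j}}\le 1-\exp\{-(1+p\gl/n)\,p\gl W_{n,i}W_{n,j}/n\}\) holds for every pair, and the maximum-weight hypothesis is used in that theorem only to activate the edge bound, you can drop it altogether.
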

	
	\begin{proof}
		We verify the hypotheses of
		Theorem~\ref{thm:conditional-rank-one-bound} conditional on the retained tree
		blob sizes.  The total blob mass is \(n\).  On the high-probability event
		\(\max_i W_{n,i}=O(n^p)\), all products \(W_{n,i}W_{n,j}\) are \(o(n)\)
		uniformly, since \(p<1/2\).  The shortcut edge probability between blobs
		\(i\ne j\) is
		\[
		1-\left(1-\frac{p\gl}{n}+O(n^{-2})\right)^{W_{n,i}W_{n,j}},
		\]
		which satisfies the edge bound with \(\beta=p\gl\), uniformly on this event.
		The same event gives the maximum-weight bound.
		Corollary~\ref{cor:backbone-susceptibility} gives
		\[
		\frac{p\gl}{n}\sum_iW_{n,i}^2
		\probc
		\rho(p,\gl)<1,
		\]
		so, choosing deterministic \(\nu\) with \(\rho(p,\gl)<\nu<1\), the
		susceptibility bound holds with probability tending to one.
		
		Choose any \(r>1/p\).  On the good setup event, and conditionally on the blob
		sizes, apply \eqref{eq:conditional-rank-one-tail} with \(t=\eps n^p\).  Apart
		from the \(o(1)\)-probability exceptional event where the edge,
		maximum-weight, or susceptibility bound fails, this gives
		\[
		\PP\left(
		\left.
		\max_{D:\,D\cap[L]=\eset}
		|\cC(D)|>\eps n^p
		\right| (W_{n,i})_i
		\right)
		\le
		1\wedge
		\left(
		C_r\eps^{-r}n^{-pr}\sum_{i>L}W_{n,i}^r
		\right).
		\]
		The ranked-tail estimate \eqref{eq:ranked-family-r-tail} makes the quantity
		inside parentheses converge to zero in probability after first
		\(n\to\infty\) and then \(L\to\infty\).  Since the right side is bounded by
		one, taking expectations proves the displayed limit.
	\end{proof}
	
	Figure~\ref{fig:top-blob-separation-proof} depicts the extra withholding step
	used below to rule out indirect paths between leading blobs.
	
	\begin{figure}[t]
		\centering
		\begin{tikzpicture}[
			x=0.9cm,y=0.85cm,
			every node/.style={font=\small},
			topblob/.style={circle,draw=teal!75!black,fill=teal!18,thick,minimum size=29pt},
			cloud/.style={circle,draw=orange!75!black,fill=orange!15,thick,minimum size=#1},
			shortcut/.style={gray!70,densely dashed,thick}
			]
			\node[topblob,label=below:{top blob \(i\)}] (i) at (0,0) {};
			\node[topblob,label=below:{withheld top blob \(j\)}] (j) at (7.0,0.25) {};
			
			\node[cloud=17pt] (a) at (2.0,0.95) {};
			\node[cloud=13pt] (b) at (2.25,-0.15) {};
			\node[cloud=10pt] (c) at (1.8,-1.0) {};
			\node[cloud=14pt] (d) at (3.55,0.55) {};
			\node[cloud=9pt] (e) at (3.6,-0.75) {};
			\node[cloud=8pt] (f) at (4.75,0.05) {};
			
			\draw[shortcut] (i) -- (a);
			\draw[shortcut] (i) -- (b);
			\draw[shortcut] (b) -- (c);
			\draw[shortcut] (a) -- (d);
			\draw[shortcut] (b) -- (e);
			\draw[shortcut] (d) -- (f);
			
			\draw[red!70!black,densely dashed,very thick,-{Latex[length=2.4mm]}]
			(f) -- node[above,sloped] {\(W_jR_{n,i}^{(K)}/n\)} (j);
			\draw[teal!60!black,rounded corners,thick,dashed]
			(1.25,-1.35) rectangle (5.25,1.35);
			\node[teal!65!black,align=center] at (3.25,2.05)
			{lower-rank exploration cloud\\mass \(R_{n,i}^{(K)}=O_{\PP}(n^p)\)};
			\node[align=center] at (3.5,-1.75)
			{withholding rules out paths through this cloud,\\not only direct top--top shortcuts};
		\end{tikzpicture}
		\caption{Withholding step in Lemma~\ref{lem:top-blobs-separated}.  To test
			whether \(\cB_{n,i}\) can share a full shortcut component with another top
			blob, all other top blobs are withheld and the component of \(\cB_{n,i}\) is
			explored only through lower-ranked blobs.  The explored cloud has mass
			\(R_{n,i}^{(K)}=O_{\PP}(n^p)\); any connection to a withheld top blob must use
			an unqueried shortcut from that blob to the cloud, with conditional probability
			bounded by \(O_{\PP}(n^{2p-1})\).}
		\label{fig:top-blob-separation-proof}
	\end{figure}
	
	\begin{lem}[Top blobs lie in distinct shortcut components]
		\label{lem:top-blobs-separated}
		For every fixed \(K\),
		\[
		\PP\left(
		\text{two of }\cB_{n,1},\ldots,\cB_{n,K}
		\text{ lie in the same full shortcut component}
		\right)
		\to0.
		\]
	\end{lem}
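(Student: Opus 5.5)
We follow the withholding scheme of Figure~\ref{fig:top-blob-separation-proof}. Throughout we condition on the retained tree blob sizes \((W_{n,j})_j\) and work on the high-probability weight event of Lemma~\ref{lem:subcritical-maximal-bound}. On that event we have \(\max_jW_{n,j}=O(n^p)\), the edge bound with \(\beta=p\gl\), and the susceptibility bound \(\frac{p\gl}{n}\sum_jW_{n,j}^2\le\nu<1\). By Proposition~\ref{prop:rrt-blob-input}(b), the leading weights satisfy \(W_{n,1},\ldots,W_{n,K}=O_{\PP}(n^p)\). Since \(K\) is fixed, a union bound reduces the claim to showing, for each fixed pair \(i\ne j\) in \([K]\), that the probability that \(\cB_{n,i}\) and \(\cB_{n,j}\) share a full component tends to zero.

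\textbf{Step 1: the withheld exploration.} Fix \(i\le K\) and let \(\Gamma_n^{(i)}\) be the contracted shortcut graph induced on the blob set \(\{i\}\cup\{K+1,K+2,\ldots\}\), so that all top blobs other than \(i\) are deleted. Let \(D_i\) be the component of \(i\) in \(\Gamma_n^{(i)}\), and put \(R_{n,i}^{(K)}:=\sum_{u\in D_i}W_{n,u}\). We bound this mass in either of two ways. One option is to dominate the exploration by the marked branching process of Lemma~\ref{lem:marked-progeny-tail} with cutoff \(x=\max_jW_{n,j}\). The exponential bound \eqref{eq:marked-progeny-exponential} then gives \(\EE[R_{n,i}^{(K)}\mid (W)]\le C\,x\). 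The other option, which is more direct, is to reuse the union-bound recursion of Lemma~\ref{lem:subcritical-exploration}: it gives \(\EE[R_{n,i}^{(K)}\mid(W)]\le W_{n,i}/(1-\rho-\delta)\), since deleting blobs only decreases the frontier masses. In either case \(R_{n,i}^{(K)}=O_{\PP}(n^p)\).

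\textbf{Step 2: connecting to a withheld blob.} The event that \(\cB_{n,i}\) and \(\cB_{n,j}\) lie in a common full component is contained in a union of two events. The first is that some blob of \(D_i\) has a retained shortcut to some withheld top blob \(k\in[K]\setminus\{i\}\). The second is that no such edge exists, in which case the component of \(i\) in the full graph equals \(D_i\) and cannot contain \(j\). The shortcut edges between \(D_i\) and the withheld blobs were never queried in constructing \(D_i\), because \(D_i\) is measurable with respect to the edges among the non-withheld blobs. Conditionally on \(D_i\), these edges are therefore independent with the original probabilities. Hence
\[
\PP(\text{connection}\mid D_i,(W))\le \sum_{k\le K,\,k\ne i}\frac{p\gl\,W_{n,k}R_{n,i}^{(K)}}{n}(1+o(1)) = O_{\PP}(n^{2p-1}),
\]
and this tends to zero because \(p<p_c(\gl)<1/2\). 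Taking expectations, using the truncation \(1\wedge\cdot\) and the \(O_{\PP}\) bounds as in the end of the proof of Theorem~\ref{thm:conditional-rank-one-bound}, concludes the argument after the union over the \(\binom K2\) pairs.

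\textbf{Main obstacle.} The delicate point is the conditional-independence claim in Step 2. It requires the exploration of \(D_i\) to be a stopping set that uses only edges internal to the non-withheld blobs, so that the edges to the withheld blobs are fresh. It also requires a uniform \(O_{\PP}(n^p)\) bound on \(R_{n,i}^{(K)}\) that does not depend on the hard-to-control identity of the top blobs. We handle the latter by dominating through the union-bound recursion, which allows arbitrary deletion of target blobs. Direct top--top edges are covered by the same estimate, since \(W_{n,i}\) is part of \(R_{n,i}^{(K)}\).
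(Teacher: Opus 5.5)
Your proposal is correct and follows essentially the same withholding argument as the paper. You delete the other top blobs and bound the mass \(R_{n,i}^{(K)}\) of the component of \(\cB_{n,i}\) by the union-bound recursion of Lemma~\ref{lem:subcritical-exploration}; then, using that the shortcuts from the withheld top blobs to this cloud were never queried, you bound the conditional connection probability by \(O_{\PP}(n^{2p-1})\). Your alternative bound on \(R_{n,i}^{(K)}\), via Lemma~\ref{lem:marked-progeny-tail} with cutoff \(\max_jW_{n,j}\), and your closure formulation of the connection event are equally valid minor variants.
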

	
	\begin{proof}
		Fix \(i\le K\).  Delete the blobs
		\(\cB_{n,1},\ldots,\cB_{n,K}\) except for \(\cB_{n,i}\), and explore the
		shortcut component of \(\cB_{n,i}\) in the remaining rank-one blob graph.  The
		one-blob exploration proof of Lemma~\ref{lem:subcritical-exploration}, with
		fewer available target blobs, gives that the total discovered mass
		\(R_{n,i}^{(K)}\) is \(O_{\PP}(n^p)\).
		
		If, in the original full shortcut graph, \(\cB_{n,i}\) is connected to some
		other top blob, then at least one retained shortcut joins the explored set to
		one of the withheld blobs \(\cB_{n,j}\), \(j\le K\), \(j\ne i\).  Indeed, take
		a path from \(\cB_{n,i}\) to the first withheld top blob it reaches; the
		preceding vertex on the path lies in the explored set.  Conditional on the
		explored set and the blob sizes, none of the shortcut edges from the withheld
		top blobs to the explored set has been queried during the withheld
		exploration.  Hence their conditional probability of containing at least one
		retained edge is at most
		\[
		H_{n,i}^{(K)}:=
		C\frac{R_{n,i}^{(K)}}{n}
		\sum_{\substack{j\le K\\ j\ne i}}W_{n,j}.
		\]
		By Proposition~\ref{prop:rrt-blob-input}, the finite sum over \(j\le K\) is
		\(O_{\PP}(n^p)\), while \(R_{n,i}^{(K)}=O_{\PP}(n^p)\).  Thus the displayed
		random upper bound satisfies
		\(H_{n,i}^{(K)}=O_{\PP}(n^{2p-1})=o_{\PP}(1)\), since \(p<1/2\).  The
		conditional probability is at most \(1\wedge H_{n,i}^{(K)}\), so taking
		expectations gives \(o(1)\).  A union bound over the finitely many \(i\le K\)
		proves the claim.
	\end{proof}
	
	\subsection{Proof of Theorem~\ref{thm:subcritical-blob-picture}}
	\label{sec:proof-subcritical-blob-picture}
	
	\begin{proof}
		Part (a) is the root-rank tightness in
		Proposition~\ref{prop:rrt-blob-input}(b).  Indeed, after the deterministic
		tie rule has been fixed, the event \(\{\rank(\cB_n(1))>K\}\) is the event
		that \(K\) ranked blobs precede the root blob.  Since the root blob has
		smallest label \(1\), it precedes every other blob of the same size under the
		tie rule.  Hence
		\[
		\{\rank(\cB_n(1))>K\}
		=
		\{|\cB_n(1)|<W_{n,K}\}.
		\]
		The displayed tightness estimate in Proposition~\ref{prop:rrt-blob-input}(b)
		therefore lets us choose \(K\) so that the desired limsup is smaller than
		\(\eps\).
		
		Part (b) is Proposition~\ref{prop:rrt-blob-input}(c)--(d), together with the
		beta-function tail asymptotic
		\[
		B\left(k,1+\frac1p\right)
		\sim
		\Gamma\left(1+\frac1p\right)k^{-1-1/p}.
		\]
		Thus
		\[
		q_k
		\sim
		\frac{1-p}{p}\Gamma\left(1+\frac1p\right)k^{-1-1/p},
		\]
		and summing this regularly varying tail gives
		\(\sum_{\ell\ge k}q_\ell\sim
		(1-p)\Gamma(1+1/p)k^{-1/p}\).  The susceptibility identity is
		Corollary~\ref{cor:backbone-susceptibility}.
		
		We prove part (c).  Put \(\rho=\rho(p,\gl)\).  The one-seed asymptotic
		\[
		|\cC_{n,i}^{\full}|
		=
		\frac{W_{n,i}}{1-\rho}
		+
		o_{\PP}(n^p)
		\]
		for every fixed \(i\) is exactly
		Lemma~\ref{lem:subcritical-exploration}.  It remains to show that, for fixed
		\(K\), no component seeded outside the first \(K\) blobs can overtake these
		\(K\) components.
		
		Fix \(K\) and an auxiliary error level \(\eta>0\).  By
		Proposition~\ref{prop:rrt-blob-input}, the ranked vector
		\[
		n^{-p}(W_{n,1},\ldots,W_{n,K+1})
		\]
		converges almost surely to a strictly decreasing limit.
		Thus there is a deterministic \(\delta>0\) such that, with lower limiting
		probability at least \(1-\eta\),
		\[
		W_{n,K}\ge 3\delta n^p,
		\qquad
		W_{n,K}-W_{n,K+1}\ge 3\delta n^p
		\]
		after reducing \(\delta\) if necessary.  Write \(\cC(D)\) for the full
		component corresponding to a component \(D\) of the contracted shortcut graph.
		Next choose \(L\) large enough that
		Lemma~\ref{lem:subcritical-maximal-bound}, applied with cutoff \(K+L\), gives
		\[
		\limsup_{n\to\infty}
		\PP\left(
		\max_{D:\,D\cap[K+L]=\eset}
		|\cC(D)|>
		\frac{\delta}{4(1-\rho)}\,n^p
		\right)
		<\eta .
		\]
		
		For this fixed \(L\), apply Lemma~\ref{lem:subcritical-exploration} to the
		finite set of seeds \(1,\ldots,K+L\).  A union bound over this finite set
		gives, with probability tending to one,
		\[
		\max_{1\le i\le K+L}
		\left|
		|\cC_{n,i}^{\full}|
		-
		\frac{W_{n,i}}{1-\rho}
		\right|
		\le
		\frac{\delta}{4(1-\rho)}\,n^p .
		\]
		Also, by Lemma~\ref{lem:top-blobs-separated}, the first \(K\) blobs lie in
		distinct full shortcut components with probability tending to one.
		
		On the intersection of these events, the smallest of the first \(K\)
		top-seeded components has size at least
		\[
		\frac{W_{n,K}}{1-\rho}
		-
		\frac{\delta}{4(1-\rho)}n^p.
		\]
		Any component which meets one of the labels \(K+1,\ldots,K+L\) but avoids the
		first \(K\) labels is equal to \(\cC_{n,j}^{\full}\) for at least one such
		seed \(j\), and hence has size at most
		\[
		\frac{W_{n,K+1}}{1-\rho}
		+
		\frac{\delta}{4(1-\rho)}n^p.
		\]
		The gap condition makes the first of these two bounds larger than the second.
		Indeed, their difference is at least
		\[
		\frac{W_{n,K}-W_{n,K+1}}{1-\rho}
		-
		\frac{\delta}{2(1-\rho)}n^p
		\ge
		\frac{5\delta}{2(1-\rho)}n^p.
		\]
		Any component avoiding all labels in \([K+L]\) has size at most
		\(\delta n^p/[4(1-\rho)]\) on the chosen tail event, while every one of the
		first \(K\) seeded components has size at least
		\[
		\frac{W_{n,K}}{1-\rho}
		-
		\frac{\delta}{4(1-\rho)}n^p
		\ge
		\frac{11\delta}{4(1-\rho)}n^p.
		\]
		Hence no component outside the first \(K\) seeded components can enter the
		top \(K\).
		
		The probability of the comparison event is at least \(1-O(\eta)\) in the
		lower limit.  Since \(\eta>0\) is arbitrary, the top-\(K\) identification
		holds with probability tending to one, and the displayed size asymptotics
		follow from Lemma~\ref{lem:subcritical-exploration}.
	\end{proof}
	
	\begin{rem}[Janson comparison]
		Janson's theorem~\cite{Janson2008SubcriticalPowerLaw} for subcritical random
		graphs with power-law degrees says
		that the largest components are obtained by starting from the largest weights
		or degrees and attaching subcritical branching-process trees, with
		amplification factor \((1-\nu)^{-1}\). The present proof is the same mechanism
		after contracting the retained recursive-tree backbone into blobs. The role of
		the degree is played by the blob mass \(W\), and the effective offspring mean is
		\(\nu=\rho(p,\gl)=p\gl/(1-2p)\).
	\end{rem}
	
	\subsection{Network archaeology from subcritical clusters}
	\label{sec:proof-network-archaeology}
	
	We now convert the subcritical cluster structure into the root-finding
	procedure from Section~\ref{sec:model-notation}.  The proof has three
	deterministic-size ingredients.  First, Theorem~\ref{thm:subcritical-blob-picture}
	applied with \(p=q\) puts the full \(q\)-percolated root component among the
	largest components after choosing \(K\).  Second, the root blob itself is a
	uniform attachment tree.  The full component containing it, however, is not
	claimed to be a tree: shortcut edges outside the root blob may create cycles
	and shortcut paths inside the pieces attached to the root blob.  What we use
	is the weaker decorated-skeleton structure.  With high probability every
	external shortcut component that meets the root blob meets it at a single
	root-blob vertex, and there are no shortcut edges internal to the root blob.
	Thus the root blob is a uniform-attachment tree skeleton, while the attached
	shortcut pieces are contracted into vertex weights.  Third, weighted Jordan
	centrality is controlled on this skeleton, and the single-attachment property
	transfers those weighted-tree comparisons back to ordinary graph Jordan
	centrality in the full, generally non-tree, component.
	Figure~\ref{fig:decorated-root-skeleton-proof} shows this final decorated
	skeleton comparison.

	\subsubsection{Weighted Jordan centrality}
	\label{sec:proof-weighted-jordan}
	
	Let \(T\) be a finite tree and let \(x=(x_v)_{v\in T}\) be positive vertex
	weights. For \(A\subseteq T\), write \(x(A)=\sum_{u\in A}x_u\). Define the
	weighted Jordan score
	\[
	\psi_T^x(v)
	:=
	\max\{x(A):A\in\Comp(T\setminus\{v\})\}.
	\]
	For \(u,v\in T\), write \((T,u)_{v\downarrow}\) for the descendant subtree of
	\(v\) when \(T\) is rooted at \(u\).
	
	\begin{lem}[Weighted Jordan comparison]
		\label{lem:weighted-jordan-comparison}
		For adjacent vertices \(u,v\in T\),
		\begin{equation}
			\label{eq:weighted-jordan-comparison}
			\psi_T^x(u)\le \psi_T^x(v)
			\quad\Longleftrightarrow\quad
			x\big((T,v)_{u\downarrow}\big)
			\ge
			x\big((T,u)_{v\downarrow}\big).
		\end{equation}
	\end{lem}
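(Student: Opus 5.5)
The plan is to compute both scores explicitly using the edge \(\{u,v\}\) and then compare them. Removing the edge \(\{u,v\}\) splits \(T\) into two subtrees: \(A:=(T,u)_{v\downarrow}\), which contains \(v\), and \(B:=(T,v)_{u\downarrow}\), which contains \(u\). They satisfy \(x(A)+x(B)=x(T)\).

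First I describe the components of \(T\setminus\{u\}\). They are \(A\) together with the subtrees hanging off \(u\) away from \(v\); each of the latter is contained in \(B\setminus\{u\}\). Hence
\[
\psi_T^x(u)=\max\{x(A),\,m_u\},\qquad m_u\le x(B)-x_u<x(B),
\]
where \(m_u\) is the maximal weight of a component of \(B\setminus\{u\}\), or \(0\) if there is none. Symmetrically,
\[
\psi_T^x(v)=\max\{x(B),\,m_v\},\qquad m_v<x(A).
\]
The strict inequalities use the positivity of the weights.

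For the direction (\(\Leftarrow\)), assume \(x(B)\ge x(A)\). Then \(\psi_T^x(u)=\max\{x(A),m_u\}\). Both terms are at most \(x(B)\), since \(x(A)\le x(B)\) by assumption and \(m_u<x(B)\). Therefore \(\psi_T^x(u)\le x(B)\le \psi_T^x(v)\).

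For the direction (\(\Rightarrow\)), I argue by contraposition. Suppose \(x(B)<x(A)\). Then \(\psi_T^x(v)=\max\{x(B),m_v\}\), and both terms are strictly less than \(x(A)\). On the other hand \(\psi_T^x(u)\ge x(A)\), so \(\psi_T^x(u)>\psi_T^x(v)\).

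The only delicate point is the strict bound \(m_u<x(B)\), which is needed in the tie case \(x(A)=x(B)\). It holds because \(u\in B\) has positive weight and is excluded from every component of \(B\setminus\{u\}\). The rest is bookkeeping of the components created by deleting \(u\) or \(v\).
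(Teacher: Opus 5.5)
Your proof is correct and matches the paper's argument essentially line for line. You split \(T\) along the edge into \((T,u)_{v\downarrow}\) and \((T,v)_{u\downarrow}\), bound the remaining components of each deletion by the weight of the opposite subtree, and compare the two cases.

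One small correction to your closing remark: in the tie case \(x(A)=x(B)\) you only need \(m_u\le x(B)\). The strict inequality that actually matters, as the paper notes, is \(m_v<x(A)\) in the contrapositive direction; you did state it and justify it by positivity of the weights.
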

	
	\begin{proof}
		Since \(u\) and \(v\) are adjacent, removing \(u\), the component containing
		\(v\) is \((T,u)_{v\downarrow}\). Every other component of
		\(T\setminus\{u\}\) is contained in \((T,v)_{u\downarrow}\). Similarly,
		removing \(v\), the component containing \(u\) is
		\((T,v)_{u\downarrow}\), and every other component is contained in
		\((T,u)_{v\downarrow}\).  In the following two displays, a maximum over an
		empty family is interpreted as \(0\).  Hence, with
		\[
		A:=x((T,u)_{v\downarrow}),
		\qquad
		B:=x((T,v)_{u\downarrow}),
		\]
		we have
		\[
		\psi_T^x(u)
		=
		\max\left\{
		A,
		\max_{\substack{
				D\in\Comp(T\setminus\{u\})\\
				D\subseteq (T,v)_{u\downarrow}}}x(D)
		\right\},
		\]
		while
		\[
		\psi_T^x(v)
		=
		\max\left\{
		B,
		\max_{\substack{
				D\in\Comp(T\setminus\{v\})\\
				D\subseteq (T,u)_{v\downarrow}}}x(D)
		\right\}.
		\]
		Every component contributing to the secondary maximum in \(\psi_T^x(u)\) is a
		proper subset of \((T,v)_{u\downarrow}\), unless the family is empty, and
		hence has weight at most \(B\).  Similarly, every component contributing to
		the secondary maximum in \(\psi_T^x(v)\) is a proper subset of
		\((T,u)_{v\downarrow}\), and hence has weight strictly smaller than \(A\)
		whenever the family is nonempty, because all vertex weights are positive.
		Thus \(B\ge A\) implies \(\psi_T^x(u)\le B=\psi_T^x(v)\), while \(A>B\)
		implies \(\psi_T^x(u)=A>\psi_T^x(v)\). This proves
		\eqref{eq:weighted-jordan-comparison}.
	\end{proof}
	
	\begin{rem}
		The unweighted version of this adjacent-vertex comparison is the elementary
		tree-centroid comparison used by Jog--Loh in Lemma~2.1 and its proof
		\cite{JogLoh2018}.  The present lemma is the same deterministic observation
		with cardinalities replaced by positive vertex weights.
	\end{rem}

	\begin{lem}[Robust weighted root finding for uniform attachment]
		\label{lem:weighted-ua-root-finding}
		Let \(U_m\) be a uniform attachment tree on vertices \([m]\), rooted at
		\(1\). Let \((X_v)_{v\in[m]}\) be independent positive weights, independent
		of \(U_m\), such that for some constants \(0<c<C<\infty\),
		\begin{equation}
			\label{eq:weight-moments}
			c\le \EE X_v\le C,
			\qquad
			\sup_v \EE X_v^2\le C .
		\end{equation}
		Let \(\TopJ_L^X(U_m)\) be the \(L\) vertices with smallest weighted Jordan
		scores. For \(\delta>0\), put
		\[
		\cA_\delta^X(U_m)
		:=
		\{v\in U_m:
		\psi_{U_m}^X(v)\le \psi_{U_m}^X(1)+\delta X(U_m)\}.
		\]
		Then for every \(\eta>0\), there exist
		\[
		L=L(\eta,c,C)<\infty,
		\qquad
		\delta=\delta(\eta,c,C)>0,
		\qquad
		m_0=m_0(\eta,c,C)<\infty,
		\]
		such that, for every \(m\ge m_0\),
		\begin{equation}
			\label{eq:robust-weighted-ua-root-finding}
			\PP\left(
			\psi_{U_m}^X(1)\le (1-\delta)X(U_m),
			\quad
			|\cA_\delta^X(U_m)|\le L
			\right)
			\ge 1-\eta .
		\end{equation}
		The same bound holds conditionally, uniformly in the
			environment, in the following sense.  Let \(m\ge m_0\), let \(\cG_m\) be a
			sigma-field, and let \(E_m\in\cG_m\) be an event on which, conditionally on
			\(\cG_m\): \(U_m\) has the uniform-attachment law and is independent of the
			weights, and the weights are conditionally independent, positive, and
			satisfy
		\[
		c\le \EE[X_v\mid\cG_m]\le C,
		\qquad
		\sup_{v\le m}\EE[X_v^2\mid\cG_m]\le C .
		\]
		Then, on \(E_m\),
		\begin{equation}
			\label{eq:uniform-conditional-ua}
				\PP\left(
				\psi_{U_m}^X(1)\le (1-\delta)X(U_m),
				\ \
				|\cA_\delta^X(U_m)|\le L
				\,\middle|\,\cG_m
				\right)
				\ge 1-\eta .
		\end{equation}
		The constants \(m_0,L,\delta\) depend only on
			\((\eta,c,C)\), and not on \(m\). In particular,
			if for each \(m\) such a pair \((\cG_m,E_m)\) is given with
			\(\PP(E_m)\to1\), then taking expectations in
			\eqref{eq:uniform-conditional-ua} gives
			\(\liminf_{m\to\infty}\PP(\psi_{U_m}^X(1)\le(1-\delta)X(U_m),\,
			|\cA_\delta^X(U_m)|\le L)\ge1-\eta\).
	\end{lem}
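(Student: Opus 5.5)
We would follow the standard uniform-attachment route of Bubeck--Devroye--Lugosi, with cardinalities replaced by weights, using Lemma~\ref{lem:weighted-jordan-comparison} as the deterministic tool. Throughout we work conditionally on \(\cG_m\) on \(E_m\). All estimates below use only first and second conditional moments of the \(X_v\) and the uniform-attachment law of \(U_m\), so the constants depend only on \((\eta,c,C)\). This gives the conditional statement \eqref{eq:uniform-conditional-ua} directly, and the unconditional and liminf statements follow by taking expectations.

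\textbf{Step 1 (weighted law of large numbers for subtree masses).} Fix a finite depth \(k\) and consider the subtrees of \(U_m\) hanging off the vertices \(1,\dots,k\). By the P\'olya urn and stick-breaking description of uniform attachment, the normalized sizes \(|(U_m,1)_{j\downarrow}|/m\) converge to a Dirichlet/GEM-type limit with continuous, atomless marginals. Conditionally on the tree, \(X(S)=\sum_{v\in S}X_v\) for a subtree \(S\) has mean in \([c|S|,C|S|]\) and variance at most \(C|S|\). Hence by Chebyshev, uniformly over the \(O(k)\) relevant subtrees,
\[
X(S)=\sum_{v\in S}\EE[X_v\mid\cG_m]+O_\PP(\sqrt m),
\]
and \(X(U_m)\ge cm/2\) with high probability. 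The mean term need not be proportional to \(|S|\) because the conditional means vary, so we will only use that it is squeezed between \(c|S|\) and \(C|S|\), together with a comparison argument in Step~2 that tolerates this.

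\textbf{Step 2 (the root is nearly a weighted centroid).} The components of \(U_m\setminus\{1\}\) are the subtrees of the children of \(1\), whose sizes have the GEM(0,1)-type law. The largest has size fraction \(V<1\) almost surely, and \(\PP(V>1-\delta')\to0\) as \(\delta'\to0\). Since weighted masses are within a factor \(C/c\) of cardinalities up to \(o(m)\), we get \(\psi^X(1)\le X(U_m)-c(1-V)m/2\). This yields \(\psi^X(1)\le(1-\delta)X(U_m)\) with probability \(\ge1-\eta/2\) for small \(\delta\).

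\textbf{Step 3 (few vertices are competitive).} This is the main obstacle: bounding \(|\cA^X_\delta|\). A vertex \(v\) lies in \(\cA_\delta^X\) only if \(\psi^X(v)\le\psi^X(1)+\delta X(U_m)\). On the other hand, \(\psi^X(v)\ge X(U_m)-X((U_m,1)_{v\downarrow})\), the component containing the root. So \(v\in\cA_\delta^X\) forces
\[
X((U_m,1)_{v\downarrow})\ge X(U_m)-\psi^X(1)-\delta X(U_m)\ge \delta X(U_m)
\]
once \(\delta\) is halved relative to Step~2. Hence every competitive vertex has a weighted descendant subtree of mass \(\ge\delta X(U_m)\ge c\delta m/2\). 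By the upper bound \(C\) on means together with Chebyshev, its cardinality is then \(\ge c'\delta m\).

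We would first try the deterministic-plus-counting argument. In a tree, the number of vertices whose descendant subtree has size \(\ge\epsilon m\) is controlled by the number of branch points among such vertices times the lengths of the chains between them. In the random recursive tree, the expected number of vertices with subtree fraction \(\ge\epsilon\) is bounded by a function of \(\epsilon\) alone: vertex \(j\) has subtree fraction roughly Beta\((1,j-1)\), so \(\sum_j\PP(\mathrm{Beta}(1,j-1)\ge\epsilon)<\infty\) uniformly in \(m\). The remaining task is to make this uniform, since the weighted-to-unweighted transfer needs a union over possibly many subtrees. We would handle this by bounding expectations directly: \(\EE\#\{v: X((U_m,1)_{v\downarrow})\ge \delta c m/2\}\) is at most \(\sum_j\PP(C|S_j|+\text{fluct}\ge\delta cm/2)\). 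The fluctuation of \(X(S_j)\) is controlled by the second moment, giving a term of order \(\sum_j \min(1, C|S_j|/(\delta m)^2)\), whose expectation is \(O(\log m/m)\to0\). Markov's inequality then gives \(|\cA_\delta^X|\le L\) with probability \(\ge1-\eta/2\), for \(L=L(\eta,c,C)\). The uniformity in \(m\ge m_0\) comes from the explicit Beta tail bounds.
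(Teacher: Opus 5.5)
Your proposal is correct and follows essentially the paper's argument. It uses the same three ingredients: a macroscopic root gap coming from the root's branches; the deterministic bound \(\psi^X_{U_m}(v)\ge X(U_m)-X((U_m,1)_{v\downarrow})\), which forces every competitor to carry a heavy weighted fringe subtree; and fringe-subtree-fraction tail bounds that are summable in the label, combined with conditional Chebyshev for the weights.

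The differences are cosmetic. The paper gets the root gap from the first two root branches: any branch misses one of them, which also avoids the implicit union over the \(\sim\log m\) root branches in your Step 2. For the count, the paper union-bounds over labels \(i>L\) using \(\EE[(N_i(m)/m)^2]\le C'/i^2\), which gives the slightly stronger conclusion \(\cA_\delta^X\subseteq[L]\); you instead apply Markov's inequality to the expected number of heavy vertices.
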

	
	\begin{proof}
		We prove the conditional assertion, for a fixed
			\(m\ge m_0\), with \(m_0\) determined at the end of the proof; the
		unconditional statement follows by taking \(\cG_m\) trivial.  Throughout the
		proof, work on \(E_m\) and condition on \(\cG_m\).  All constants below
		depend only on the fixed bounds \(c\) and \(C\), not on the realized
		environment and not on \(m\).  We write \(\PP_{\cG}\) and
		\(\EE_{\cG}\) for conditional probability and expectation given \(\cG_m\).
		
		For \(i\le m\), let
		\[
		F_i(m):=(U_m,1)_{i\downarrow},
		\qquad
		N_i(m):=|F_i(m)|,
		\qquad
		S_i(m):=X(F_i(m)),
		\]
		and write \(T_m=X(U_m)\). Thus \(F_i(m)\) is the descendant fringe subtree of
		vertex \(i\) when \(U_m\) is rooted at \(1\).
		
		We first show that late weighted fringe subtrees are uniformly small. Fix
		\(i\), and reveal the recursive tree after vertex \(i\) is born. For
		\(s\ge i\), \(N_i(s)\) evolves by
		\[
		N_i(s+1)=N_i(s)+\xi_s,
		\qquad
		\PP(\xi_s=1\mid U_s)=\frac{N_i(s)}{s}.
		\]
		Hence \(M_i(s):=N_i(s)/s\) is a martingale with mean \(1/i\). Moreover,
		\[
		\begin{aligned}
			\EE[M_i(s+1)^2\mid U_s]
			&=
			\left(1-\frac1{(s+1)^2}\right)M_i(s)^2
			+
			\frac{M_i(s)}{(s+1)^2}  \\
			&\le
			M_i(s)^2+\frac{M_i(s)}{(s+1)^2}.
		\end{aligned}
		\]
		Taking expectations and summing over \(s\ge i\) gives the uniform second
		moment bound
		\begin{equation}
			\label{eq:fringe-second-moment}
			\sup_{m\ge i}
			\EE\left[\left(\frac{N_i(m)}{m}\right)^2\right]
			\le
			\frac{C'}{i^2}
		\end{equation} for some constant $C'$.
		Given \(U_m\) and \(\cG_m\), the conditional independence and moment bounds
		imply
		\[
		\EE_{\cG}[S_i(m)^2\mid U_m]
		\le
		C_1 N_i(m)^2.
		\]
		Therefore
		\begin{equation}
			\label{eq:weighted-fringe-second-moment}
			\EE_{\cG} S_i(m)^2\le C_1\frac{m^2}{i^2}.
		\end{equation}
		Also \(cm\le \EE_{\cG}T_m\le Cm\) and
		\(\Var_{\cG}(T_m)\le Cm\), so
		\[
		\PP_{\cG}(T_m\le cm/2)\le \frac{C_2}{m}.
		\]
		Consequently, for every \(a>0\),
		\[
		\begin{aligned}
			\PP_{\cG}\left(\max_{i>L}S_i(m)>aT_m\right)
			&\le
			\PP_{\cG}(T_m\le cm/2)
			+
			\sum_{i>L}
			\PP_{\cG}\left(S_i(m)>\frac{acm}{2}\right)  \\
			&\le
			\frac{C_2}{m}+\frac{C_3}{a^2}\sum_{i>L}i^{-2}.
		\end{aligned}
		\]
		Since \(\sum_{i>L}i^{-2}\le 1/L\), this yields the explicit
			bound, valid for every \(m\ge1\), every \(L\ge1\), and every \(a>0\),
			uniformly on \(E_m\):
		\begin{equation}
			\label{eq:late-fringe-small}
				\PP_{\cG}\left(\max_{i>L}S_i(m)>aT_m\right)
				\le
				\frac{C_2}{m}+\frac{C_3}{a^2L}.
		\end{equation}
		
		Next we show that the root is separated from total mass by a deterministic
		macroscopic gap with high probability.  Let \(R_1(m)\) and \(R_2(m)\) be the
		two branches of the root corresponding to the first two children ever born to
		the root, and let \(N_j^{\Root}(m)=|R_j(m)|\).
		The branch sizes \(N_j^{\Root}(m)\) are functions of the tree
			alone, and on \(E_m\) the conditional law of \(U_m\) given \(\cG_m\) is the
			unconditional uniform-attachment law; hence, on \(E_m\), for any tree event
			the conditional and unconditional probabilities agree, and it suffices to
			bound the unconditional probability below.
		In the Yule embedding of the uniform attachment tree, the first two children
		are born at finite times almost surely and
		\[
		m^{-1}N_j^{\Root}(m)\to P_j>0,
		\qquad j=1,2,
		\]
		almost surely.  Hence, for every \(\zeta>0\), there
		exist \(\gamma=\gamma(\zeta)>0\) and
			\(m_1=m_1(\zeta)<\infty\), depending only on \(\zeta\), such that for all
			\(m\ge m_1(\zeta)\),
		\[
		\PP_{\cG}\left(
		\min\{N_1^{\Root}(m),N_2^{\Root}(m)\}\ge \gamma m
		\right)
		\ge 1-\zeta .
		\]
		On this event, put \(B_j(m)=X(R_j(m))\).  Conditional on \(U_m\) and
		\(\cG_m\),
		\[
		\EE_{\cG}[B_j(m)\mid U_m]\ge cN_j^{\Root}(m)\ge c\gamma m,
		\qquad
		\Var_{\cG}(B_j(m)\mid U_m)\le C m .
		\]
		Chebyshev's inequality gives
		\[
		\PP_{\cG}\left(B_j(m)\le c\gamma m/2\mid U_m\right)
		\le
		\frac{C_3}{m},
		\qquad j=1,2,
		\]
		on the same event.  Also
		\[
		\PP_{\cG}(T_m>2Cm)\le \frac{C_4}{m}.
		\]
		Therefore, with conditional probability at least
		\(1-\zeta-C_5/m\), where
			\(C_5:=2C_3+C_4\) and \(m\ge m_1(\zeta)\),
		\[
		\min\{B_1(m),B_2(m)\}
		\ge
		\frac{c\gamma}{4C}T_m .
		\]
		On this event, after deleting the root, every branch misses at least one of
		the two masses \(B_1(m),B_2(m)\), and hence
		\[
		\psi_{U_m}^X(1)
		\le
		T_m-\min\{B_1(m),B_2(m)\}.
		\]
		Consequently, for every \(\zeta>0\), every
			\(\delta\le c\gamma(\zeta)/(16C)\), and every \(m\ge m_1(\zeta)\), uniformly
			on \(E_m\),
		\begin{equation}
			\label{eq:root-macroscopic-gap}
				\PP_{\cG}\left(
				\psi_{U_m}^X(1)\le (1-4\delta)T_m
				\right)
				\ge
				1-\zeta-\frac{C_5}{m}.
		\end{equation}
		
		We now fix the constants.  Given \(\eta>0\), set
			\(\zeta:=\eta/4\) and
			\[
			\delta:=\min\left\{\frac18,\ \frac{c\,\gamma(\zeta)}{16C}\right\},
			\qquad
			L:=\left\lceil\frac{8C_3}{\delta^2\eta}\right\rceil,
			\qquad
			m_0:=\max\left\{m_1(\zeta),\
			\frac{8(C_2+C_5)}{\eta}\right\},
			\]
			all depending only on \((\eta,c,C)\).  Let \(m\ge m_0\).  By
			\eqref{eq:root-macroscopic-gap}, the event
			\(\{\psi_{U_m}^X(1)\le(1-4\delta)T_m\}\) fails with conditional probability
			at most \(\zeta+C_5/m\le \eta/4+\eta/8\), and by
			\eqref{eq:late-fringe-small} with \(a=\delta\), the event
			\(\{\max_{i>L}S_i(m)\le\delta T_m\}\) fails with conditional probability at
			most \(C_2/m+C_3/(\delta^2L)\le \eta/8+\eta/8\).  Hence the intersection of
			the two events has conditional probability at least \(1-\eta\), uniformly on
			\(E_m\).
		On the intersection of these two events,
		\[
		\psi_{U_m}^X(1)\le (1-4\delta)T_m
		\le
		(1-\delta)T_m,
		\]
		and every vertex \(i>L\) satisfies \(S_i(m)\le\delta T_m\). For such \(i\),
		the component of \(U_m\setminus\{i\}\) containing the root has weighted mass at
		least \(T_m-S_i(m)\), so
		\[
		\psi_{U_m}^X(i)
		\ge
		(1-\delta)T_m
		>
		(1-4\delta)T_m+\delta T_m
		\ge
		\psi_{U_m}^X(1)+\delta T_m.
		\]
		Thus no vertex \(i>L\) belongs to \(\cA_\delta^X(U_m)\), and hence
		\(|\cA_\delta^X(U_m)|\le L\). This proves
		\eqref{eq:uniform-conditional-ua} on \(E_m\), for every
			\(m\ge m_0\).  Taking \(\cG_m\) trivial gives
			\eqref{eq:robust-weighted-ua-root-finding}, and if \(\PP(E_m)\to1\), taking
			expectations gives the liminf form stated in the lemma.
	\end{proof}

		\begin{rem}\label{rem:uniform_m_bound}
			The bounds in the proof of Lemma \ref{lem:weighted-ua-root-finding} are independent $m$ given $m\geq m_0$ for sufficiently large $m_0$. This fact is useful later in the proof of Proposition~\ref{prop:root-finding-root-component}. 
	\end{rem}
	
	\subsubsection{The root component as a decorated skeleton}
	\label{sec:proof-root-decorations}
	
	Let \(\cB_n^{\Root}=\cB_n(1)\) be the \(q\)-percolated recursive-tree blob
	containing the root, and let \(\cC_n^{\Root}\) be the full \(q\)-percolated
	component containing the root after shortcuts are included.
	
	\begin{figure}[t]
		\centering
		\begin{tikzpicture}[
			x=0.9cm,y=0.85cm,
			every node/.style={font=\small},
			skel/.style={circle,draw=teal!75!black,fill=teal!17,thick,minimum size=13pt},
			deco/.style={circle,draw=orange!75!black,fill=orange!14,thick,minimum size=#1},
			edge/.style={thick},
			dashededge/.style={gray!70,densely dashed,thick}
			]
			\node[skel,label={[font=\scriptsize,xshift=-2pt]above left:{root}}] (r) at (0,0) {\(1\)};
			\node[skel] (a) at (1.2,0.75) {};
			\node[skel] (b) at (1.35,-0.65) {};
			\node[skel] (c) at (2.45,1.0) {};
			\node[skel] (d) at (2.55,0.1) {};
			\node[skel] (e) at (2.5,-1.05) {};
			\draw[edge] (r)--(a)--(c);
			\draw[edge] (a)--(d);
			\draw[edge] (r)--(b)--(e);
			
			\node[deco=12pt] (da1) at (1.25,1.65) {};
			\node[deco=9pt] (da2) at (1.85,1.95) {};
			\draw[dashededge] (a)--(da1)--(da2);
			\node[deco=15pt] (db1) at (0.6,-1.45) {};
			\node[deco=8pt] (db2) at (0.0,-1.9) {};
			\draw[dashededge] (b)--(db1)--(db2);
			\node[deco=10pt] (dd1) at (3.35,0.15) {};
			\draw[dashededge] (d)--(dd1);
			\node[deco=11pt] (de1) at (3.25,-1.55) {};
			\draw[dashededge] (e)--(de1);
			
			\draw[teal!70!black,rounded corners,thick]
			(-0.35,-1.25) rectangle (2.85,1.35);
			\node[teal!70!black,align=center,font=\scriptsize] at (1.25,-2.35)
			{uniform-attachment\\skeleton};
			
			\draw[-{Latex[length=2.8mm]},thick]
			(3.85,0.2) -- node[above,align=center,font=\scriptsize]
			{contract\\decorations} (4.95,0.2);
			
			\node[circle,draw=teal!75!black,fill=teal!17,thick,minimum size=19pt,
			label={[font=\scriptsize,yshift=-2pt]below left:{\(X_1\)}}] (wr) at (5.45,0) {};
			\node[circle,draw=teal!75!black,fill=teal!17,thick,minimum size=24pt,
			label={[font=\scriptsize]above:{\(X_a\)}}] (wa) at (6.55,0.75) {};
			\node[circle,draw=teal!75!black,fill=teal!17,thick,minimum size=27pt,
			label={[font=\scriptsize]below left:{\(X_b\)}}] (wb) at (6.7,-0.65) {};
			\node[circle,draw=teal!75!black,fill=teal!17,thick,minimum size=15pt] (wc) at (7.65,1.0) {};
			\node[circle,draw=teal!75!black,fill=teal!17,thick,minimum size=20pt] (wd) at (7.8,0.1) {};
			\node[circle,draw=teal!75!black,fill=teal!17,thick,minimum size=21pt] (we) at (7.75,-1.05) {};
			\draw[edge,teal!70!black] (wr)--(wa)--(wc);
			\draw[edge,teal!70!black] (wa)--(wd);
			\draw[edge,teal!70!black] (wr)--(wb)--(we);
			\node[align=center,font=\scriptsize,text width=3.3cm] at (6.75,-2.1)
			{same skeleton; vertex \(u\) carries mass \(X_u=|D_u|\)};
		\end{tikzpicture}
		\caption{The root component as a decorated skeleton.  The root blob is a
			uniform attachment tree; shortcut pieces attach singly to its vertices.  The
			right panel contracts each decoration \(D_u\) into the vertex weight
			\(X_u=|D_u|\), giving the weighted skeleton used for the Jordan comparison.}
		\label{fig:decorated-root-skeleton-proof}
	\end{figure}
	
	\begin{lem}[Root blob genealogy]
		\label{lem:root-blob-genealogy}
		Let \(A=\cB_n^{\Root}\) denote the vertex set of the root blob.  Conditional
		on any possible realization \(A\subseteq[n]\), the parent choices of the
		vertices in \(A\setminus\{1\}\) are independent, and the parent of
		\(j\in A\setminus\{1\}\) is uniform on \(A\cap[j-1]\).  Thus, after relabeling
		\(A\) in increasing birth order, the rooted tree induced on \(A\) is a uniform
		attachment tree on \(|A|\) vertices.  In particular, the same conclusion holds
		conditional on \(|\cB_n^{\Root}|=m\).

		This conditional law remains unchanged after further conditioning on the
		parent choices and tree-percolation indicators belonging to vertices outside
		\(A\), and on shortcut variables independent of the recursive-tree
		construction, provided that the parent choices of the vertices in
		\(A\setminus\{1\}\) are not revealed.
	\end{lem}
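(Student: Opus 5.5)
We will prove the lemma by writing the joint probability of the relevant tree variables as an explicit product over vertices. Encode the recursive tree and its percolation by independent variables \((\pi_j,\epsilon_j)_{j=2}^n\). Here \(\pi_j\) is uniform on \([j-1]\) and is the parent of \(j\), and \(\epsilon_j\sim\mathrm{Bernoulli}(q)\) (or \(p\)) indicates whether the edge \(\{j,\pi_j\}\) is retained. The key point is that the root blob is obtained from the \emph{retained} edges by following parent pointers. Fix a set \(A\subseteq[n]\) with \(1\in A\), and a family of parents \(a=(a_j)_{j\in A\setminus\{1\}}\) with \(a_j\in A\cap[j-1]\). We will compute the probability that \(\cB_n^{\Root}=A\) and \(\pi_j=a_j\) for all \(j\in A\setminus\{1\}\), and show that this probability factors as a constant depending only on \(A\) times \(\prod_{j\in A\setminus\{1\}}|A\cap[j-1]|^{-1}\).

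The event \(\{\cB_n^{\Root}=A\}\) is the intersection of three conditions.
\begin{enumerate}
\item Every \(j\in A\setminus\{1\}\) has \(\epsilon_j=1\) and \(\pi_j\in A\). Under our constraint \(a_j\in A\cap[j-1]\), this reduces to \(\epsilon_j=1\).
\item No vertex \(k\notin A\) is joined by a retained edge to a vertex of \(A\). The only tree edge between \(A\) and its complement that \(k\) can carry is the edge \(\{k,\pi_k\}\), so the condition reads: for each \(k\notin A\), either \(\pi_k\notin A\) or \(\epsilon_k=0\).
\item The set \(A\) is connected by retained edges. Given \(\epsilon_j=1\) and \(\pi_j\in A\cap[j-1]\) for all \(j\in A\setminus\{1\}\), this holds automatically: following parent pointers from any \(j\in A\) strictly decreases labels and stays in \(A\), so it reaches \(1\).
\end{enumerate}

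By independence, the probability factors as
\[
q^{|A|-1}\prod_{j\in A\setminus\{1\}}\frac1{j-1}\;\cdot\;\prod_{k\notin A}\PP(\pi_k\notin A\text{ or }\epsilon_k=0).
\]
The last product involves only the variables of vertices outside \(A\), and it does not depend on the choice of \(a\). Dividing by the sum over all admissible \(a\), the conditional law of \((\pi_j)_{j\in A\setminus\{1\}}\) given \(\cB_n^{\Root}=A\) is the product of the uniform laws on \(A\cap[j-1]\). This is exactly the uniform attachment law after relabeling \(A\) in increasing birth order, because \(|A\cap[j-1]|\) is the rank of \(j\) in \(A\) minus one. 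Conditioning on \(|\cB_n^{\Root}|=m\) is a mixture over sets \(A\) of size \(m\), and each of these gives the same relabeled law, so the conclusion holds conditionally on the size as well.

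For the robustness statement, we redo the same computation on the event \(\{\cB_n^{\Root}=A\}\) intersected with any prescribed values of \((\pi_k,\epsilon_k)_{k\notin A}\) consistent with that event, and with any shortcut variables. The shortcut variables are independent of the tree variables, so they factor out. The outside variables appear only in the factor indexed by \(k\notin A\), which is again free of \(a\). The indicators \(\epsilon_j\), \(j\in A\), are already forced to equal \(1\). Hence the conditional law of the parents inside \(A\) is unchanged.

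The only delicate step is step (2) above. We must check that the requirement that \(A\) be a \emph{full} cluster, and not merely a retained-connected set, involves only the variables of vertices outside \(A\) and never the positions of the inside parents. This is where the recursive structure is used: each edge is indexed by its younger endpoint, so edges leaving \(A\) are exactly the edges \(\{k,\pi_k\}\) with \(k\notin A\) and \(\pi_k\in A\). Once this bookkeeping is done, the factorization is immediate.
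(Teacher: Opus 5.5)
Your proposal is correct and follows the paper's own argument. Both encode the tree and its percolation by independent pairs \((P_j,R_j)\), write \(\{\cB_n^{\Root}=A\}\) as an intersection of vertexwise events (retained edge to a parent in \(A\cap[j-1]\) for \(j\in A\), and not both \(R_k=1\) and \(P_k\in A\) for \(k\notin A\)), and use the resulting factorization to read off both the uniform parent laws and their insensitivity to exterior and shortcut variables; your explicit joint-probability computation and your remark that edges leaving \(A\) are indexed by their younger endpoint \(k\notin A\) make the same factorization step more concrete.
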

	
	\begin{proof}
		Use the direct discrete construction of the random recursive tree.  For each
		\(j\ge2\), let \(P_j\) be the parent of \(j\), chosen uniformly from
		\([j-1]\), independently over \(j\).  Independently of all parent choices, let
		\(R_j\) be the Bernoulli-\(q\) indicator that the tree edge
		\(\{P_j,j\}\) is retained.

		Fix \(A\subseteq[n]\) with \(1\in A\).  The event that \(A\) is exactly the
		retained-tree cluster of the root is the intersection, over \(j\ge2\), of the
		following vertexwise events:
		\[
		\begin{cases}
		R_j=1\text{ and }P_j\in A\cap[j-1],
		&j\in A,\\
		\text{not both }R_j=1\text{ and }P_j\in A\cap[j-1],
		&j\notin A.
		\end{cases}
		\]
		Indeed, the first condition connects every vertex of \(A\setminus\{1\}\)
		backward through retained edges to the root, while the second prevents any
		vertex outside \(A\) from being the first vertex of a retained path entering
		\(A\).  Conversely, these conditions make \(A\) precisely the root cluster.

		Because the pairs \((P_j,R_j)\) are independent over \(j\), these conditioning
		conditions factor over the vertices.  For \(j\in A\setminus\{1\}\), conditioning
		on its factor gives
		\[
		\PP\big(P_j=i\mid R_j=1,\ P_j\in A\cap[j-1]\big)
		=
		\frac1{|A\cap[j-1]|},
		\qquad i\in A\cap[j-1].
		\]
		Consequently, conditional on the root-blob vertex set \(A\), the variables
		\((P_j:j\in A\setminus\{1\})\) remain independent with precisely these laws.
		Relabeling \(A\) in increasing order therefore gives the usual recursive-tree
		construction on \([|A|]\).  Mixing over the possible sets \(A\) of size \(m\)
		proves the assertion conditional on \(|\cB_n^{\Root}|=m\).

		Finally, after conditioning on \(A\), the factorization also separates the
		internal parent variables from all pairs \((P_j,R_j)\) with \(j\notin A\).
		Thus revealing those exterior tree variables does not alter the displayed
		internal parent laws.  The shortcut layer is independent of every
		\((P_j,R_j)\), so further revealing any specified collection of shortcut
		variables likewise leaves those laws unchanged, as long as the internal parent
		choices themselves are not revealed.
	\end{proof}

	\begin{figure}[t]
		\centering
		\resizebox{0.66\textwidth}{!}{%
			\begin{tikzpicture}[scale=1.0]
				\coordinate (v1)  at (3.4,3.3);
				\coordinate (v2)  at (2.0,2.6);
				\coordinate (v3)  at (4.9,2.75);
				\coordinate (v4)  at (1.2,1.8);
				\coordinate (v5)  at (2.7,1.8);
				\coordinate (v6)  at (4.55,1.9);
				\coordinate (v7)  at (6.6,2.2);
				\coordinate (v8)  at (5.5,0.35);
				\coordinate (v9)  at (1.8,1.0);
				\coordinate (v10) at (3.0,1.0);
				\coordinate (v11) at (4.85,1.0);
				\coordinate (v12) at (7.0,1.3);
				\begin{scope}[on background layer]
					\node[ellipse,fill=blobfill,draw=rootcol,thick,inner sep=-2pt,rotate=51,
					fit=(v1)(v2)(v5)(v10)] {};
					\node[ellipse,fill=black!6,draw=black!45,inner sep=-1pt,rotate=48,fit=(v4)(v9)] {};
					\node[ellipse,fill=black!6,draw=black!45,inner sep=-4pt,rotate=68,fit=(v3)(v6)(v11)(v8)] {};
					\node[ellipse,fill=black!6,draw=black!45,inner sep=-2pt,rotate=55,fit=(v7)(v12)] {};
				\end{scope}
				\foreach \a/\b in {1/2,2/5,5/10,3/6,6/11,4/9,7/12} \draw[treecol,very thick] (v\a) -- (v\b);
				\draw[shortcol,very thick,dashed] (v8) -- (v11);
				\draw[shortcol,very thick,dashed] (v9) -- (v10);
				\draw[black!45,densely dotted,thick] (v4) to[bend left=22] (v2);
				\draw[black!45,densely dotted,thick] (v6) to[bend right=14] (v1);
				\draw[black!45,densely dotted,thick] (v7) to[bend right=20] (v1);
				\drawnodes
				\node[font=\scriptsize,rootcol] at (1.35,3.75) {$\mathcal{B}_n^{\mathrm{root}}$};
				\node[font=\scriptsize] at (0.5,2.15) {$S_{n,1}$};
				\node[font=\scriptsize] at (6.4,0.5) {$S_{n,2}$};
				\node[font=\scriptsize] at (7.75,1.85) {$S_{n,3}$};
				\node[font=\scriptsize,shortcol!80!black] at (1.75,0.2) {$D_{10}=\{10\}\cup S_{n,1}$};
			\end{tikzpicture}%
		}
		\caption{The decorated skeleton, in the example of
			Figure~\ref{fig:percolation-pipeline}. Deleting
			$\mathcal{B}_n^{\mathrm{root}}$ and its incident retained shortcuts
			leaves the external components $S_{n,1}=2$, $S_{n,2}=4$, $S_{n,3}=2$;
			each is in general a union of blobs, not a single blob. The retained
			shortcut $\{9,10\}$ (dashed) attaches $S_{n,1}$ at the single vertex
			$10$, giving the decoration $D_{10}$ with weight $X_{10}=3$; all other
			skeleton vertices have $X_u=1$. Dotted arcs mark unrevealed
			vertex--component pairs, whose indicators define the raw weights
			$\widetilde X_u=1+\sum_a S_{n,a}I_{u,a}$; on the single-attachment
			event of Lemma~\ref{lem:singly-attached-decorations},
			$\widetilde X_u=X_u$ for all $u$.}
		\label{fig:decorations-raw-weights}
	\end{figure}

	\begin{lem}[Second and third moment susceptibility for external components]
		\label{lem:external-third-susceptibility}
			Assume  \eqref{eq:arch-q-range}. Let $G_n^{(q)}$ denote the $q$-percolated
			observed graph, i.e.\ the graph on $[n]$ whose edge set consists of all
			retained edges of $G_n$, both tree and shortcut. Delete from $G_n^{(q)}$ the
			vertices of the root blob $\cB_n^{\Root}$ together with all retained shortcut
			edges having at least one endpoint in $\cB_n^{\Root}$, and call the connected
			components of the remaining graph the \emph{external components}. Enumerate
			them by an index $a$, and let
			\[
			S_{n,a} \;:=\; |\{\text{vertices of the $a$-th external component}\}|
			\]
			be their vertex masses. (Each external component is, in general, a union of
			several retained tree blobs joined by retained shortcut edges; it is not a
			single blob. The external components are precisely the candidate decorations
			of Lemma~\ref{lem:singly-attached-decorations}: an external component that is joined to
			$\cB_n^{\Root}$ by a retained shortcut edge in $G_n^{(q)}$ becomes part of a
			decoration $D_u$.) Then
			\begin{equation}
				\label{eq:S-moments}
				\frac1n \sum_a S_{n,a}^2 \;=\; O_{\PP}(1),
				\qquad
				\frac1n \sum_a S_{n,a}^3 \;=\; O_{\PP}(1).
		\end{equation}
	\end{lem}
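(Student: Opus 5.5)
The plan is to reduce both sums in \eqref{eq:S-moments} to component-size moments of the whole contracted shortcut graph, and to control those by the branching-process domination already used in Theorem~\ref{thm:conditional-rank-one-bound}. The first observation is deterministic. Writing \(S(v)\) for the size of the external component containing \(v\notin\cB_n^{\Root}\), we have \(\sum_a S_{n,a}^k=\sum_{v}S(v)^{k-1}\). An external component is a component of the graph induced on the non-root blobs, so it is contained in the full \(q\)-percolated component \(\cC(v)\) of \(v\). Hence \(\sum_aS_{n,a}^k\le\sum_{v\in[n]}|\cC(v)|^{k-1}\) for \(k=2,3\). It therefore suffices to show \(\frac1n\sum_v|\cC(v)|\) and \(\frac1n\sum_v|\cC(v)|^2\) are \(O_{\PP}(1)\). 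Grouping vertices by blob, these are \(\frac1n\sum_iW_{n,i}|\cC^{\full}_{n,i}|^{k-1}\).

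Condition on the blob sizes and work on the good event of Lemma~\ref{lem:subcritical-maximal-bound}: \(\max_iW_{n,i}=O(n^q)\), the edge bound, and \(\frac{\bar\beta}{n}\sum_iW_{n,i}^2\le\bar\nu<1\). On this event \(|\cC^{\full}_{n,i}|\) is stochastically dominated by the total progeny mass \(T(i)\) of the marked Poisson branching process of Lemma~\ref{lem:marked-progeny-tail}, with no cutoff (take \(x=\infty\), i.e.\ all types allowed). The domination is the same coupling as in the proof of Theorem~\ref{thm:conditional-rank-one-bound}.

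For the first moment, the Poisson functional gives
\[
m_1(a)=w_a+\frac{\bar\beta w_a}{n}\sum_jw_jm_1(j).
\]
The ansatz \(m_1(a)=\kappa w_a\) yields \(\kappa\le(1-\bar\nu)^{-1}\), and therefore \(\frac1n\sum_iW_{n,i}\EE T(i)\le\frac{1}{(1-\bar\nu)n}\sum_iW_{n,i}^2=O_{\PP}(1)\) by Corollary~\ref{cor:backbone-susceptibility}. For the second moment, decompose \(T(a)=w_a+\sum_{\text{children}}T(\cdot)\). With Poisson offspring, the compound-Poisson second-moment formula gives
\[
m_2(a)\le 2w_a^2+2\Big(\frac{\bar\beta w_a}{n}\sum_jw_jm_1(j)\Big)^2+\frac{\bar\beta w_a}{n}\sum_jw_jm_2(j).
\]
I will try the ansatz \(m_2(a)\le Aw_a^2+Bw_a\). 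Substituting, the last term contributes \(\bar\nu Aw_a\cdot\frac{\bar\beta}{n}\sum_jw_j^3/\bar\nu+\bar\nu Bw_a\). This closes with finite \(A,B\) provided \(\frac1n\sum_jW_{n,j}^3=O(1)\) on the good event.

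The cube bound is exactly where \(q<1/3\) enters. Lemma~\ref{lem:fixed-size-family-counts}(b) with \(r=3\) and \(3q<1\) gives \(\sum_iW_{n,i}^3=O_{\PP}(n)\), so this cube bound can be added to the good event at the cost of a constant \(K_0\). Then
\[
\frac1n\sum_iW_{n,i}\EE T(i)^2\le \frac{A}{n}\sum_iW_{n,i}^3+\frac Bn\sum_iW_{n,i}^2=O_{\PP}(1),
\]
and conditional Markov followed by removal of the conditioning (the good event has probability \(\to1\)) gives \eqref{eq:S-moments}.

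The main obstacle is making the moment recursion rigorous for the countable-type process, since the ansatz presupposes finiteness of \(m_2\). I will handle this as in the proof of Lemma~\ref{lem:marked-progeny-tail}. First restrict to finite type sets \(J\) and finitely many generations, where all moments are finite and the recursion holds with inequality. Prove the bounds \(m_1\le\kappa w_a\) and \(m_2\le Aw_a^2+Bw_a\) by induction on the generation, with \(A,B\) depending only on \(\bar\nu\) and \(K_0\). Finally pass to the limit by monotone convergence.
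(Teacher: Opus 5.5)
Your proposal is correct and follows essentially the paper's argument: pass from external components to full \(q\)-percolated components, rewrite the two sums as size-biased first and second moments of a blob's full component, dominate by the Poisson marked branching process, and close the first- and second-moment recursions using \(\frac1n\sum_iW_{n,i}^2\to(1-2q)^{-1}\) and \(\frac1n\sum_iW_{n,i}^3=O_{\PP}(1)\) (the latter from \(q<1/3\)). Your multitype process with means \(\bar\beta w_bw_j/n\) is the paper's process with \(\Poi(\bar\beta w)\) children carrying i.i.d.\ size-biased marks, and your generation truncation makes explicit the finiteness of the second moment that the paper uses implicitly; one small correction is that \(\{\frac1n\sum_iW_{n,i}^3\le K_0\}\) does not have probability tending to one for fixed \(K_0\), only lower limit at least \(1-\zeta\) for \(K_0=K_0(\zeta)\), so the final unconditioning should be done as in the paper via \(\limsup_n\PP(\frac1n\sum_aS_{n,a}^r>M)\le\zeta+C_\zeta/M\), then \(M\to\infty\) and \(\zeta\downarrow0\).
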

	
	\begin{proof}
		It is enough to prove the same bounds for the component sizes
		\((\bar S_{n,b})_b\) of the full \(q\)-percolated graph on all vertices,
		because deleting the root blob and incident shortcut edges only partitions
		components and removes vertices. Thus, for \(r\ge1\),
		\[
		\sum_a S_{n,a}^r\le \sum_b \bar S_{n,b}^r .
		\]
		Let \(V_n\) be a uniformly chosen vertex. Conditional on the full
		\(q\)-percolated graph,
		\begin{equation}
			\label{eq:uniform-component-moments}
			\EE\big[|\cC_n(V_n)|\mid G_n^{(q)}\big]
			=
			\frac1n\sum_b \bar S_{n,b}^2,
			\qquad
			\EE\big[|\cC_n(V_n)|^2\mid G_n^{(q)}\big]
			=
			\frac1n\sum_b \bar S_{n,b}^3 .
		\end{equation}
		Hence it suffices to bound the first two moments of the component of a
		uniform vertex.
		
		Condition on the \(q\)-percolated recursive-tree blobs and their masses
		\((W_{n,i})_i\).  In the contracted shortcut graph the mass of a contracted
		vertex is its blob size.  The blob containing the uniform vertex \(V_n\) has
		the empirical size-biased law
		\[
		\PP_n(W_n^\star=w)
		=
		\frac1n\sum_i W_{n,i}\ind\{W_{n,i}=w\}.
		\]
		The same size bias appears for children in the exploration: a shortcut from a
		blob is generated by choosing an endpoint among original vertices, so the
		mass distribution of a newly hit blob is proportional to its mass.
		Corollary~\ref{cor:backbone-susceptibility} and
		Lemma~\ref{lem:fixed-size-family-counts}, applied with \(p=q\), give
		\begin{equation}
			\label{eq:size-biased-second-moment}
			\EE_n W_n^\star
			=
			\frac1n\sum_i W_{n,i}^2
			\probc
			\frac1{1-2q},
			\qquad
			\EE_n[(W_n^\star)^2]
			=
			\frac1n\sum_i W_{n,i}^3
			=
			O_{\PP}(1),
		\end{equation}
		where the second bound is precisely the finite-second-moment bound for the
		size-biased blob law and uses \(q<1/3\), through
		\eqref{eq:uniform-family-moments} with \(r=3\).  Also
		\(\max_iW_{n,i}=O_{\PP}(n^q)=o_{\PP}(n^{1/2})\).
		
		Choose \(\eps>0\) and \(\bar\rho<1\) so that
		\[
		(1+\eps)q\gl(1-2q)^{-1}<\bar\rho<1 .
		\]
		Since \(\max_iW_{n,i}=o_{\PP}(n^{1/2})\), fix a deterministic sequence
		\(a_n\downarrow0\) such that
		\(\PP(\max_iW_{n,i}\le a_nn^{1/2})\to1\).
		Fix an auxiliary error \(\zeta>0\).  Tightness in
		\eqref{eq:size-biased-second-moment}, together with the convergence of
		\(\EE_nW_n^\star\), permits us to choose a finite deterministic
		\(B_\zeta\) such that the event
		\[
		\begin{split}
		E_n(B_\zeta):=\big\{&
		\EE_n W_n^\star\le B_\zeta,\quad
		\EE_n[(W_n^\star)^2]\le B_\zeta,\quad
		\max_iW_{n,i}\le a_nn^{1/2},\\
		&\rho_n:=(1+\eps)q\gl\,\EE_n W_n^\star\le\bar\rho
		\big\}
		\end{split}
		\]
		satisfies
		\begin{equation}
			\label{eq:external-good-event-tightness}
			\limsup_{n\to\infty}\PP\big(E_n(B_\zeta)^c\big)\le\zeta.
		\end{equation}
		Here the last two conditions have probability tending to one, and the possible
		\(\zeta\)-loss is needed only for the tight second moment of the size-biased
		blob law.  On \(E_n(B_\zeta)\), all products
		\(W_{n,i}W_{n,j}\le a_n^2n=o(n)\)
		uniformly, and hence every actual shortcut edge probability between blobs
		of masses \(x\) and \(w\) is at most
		\[
		(1+\eps)\frac{q\gl xw}{n}
		\]
		for all large \(n\).
		
		On \(E_n(B_\zeta)\), the contracted shortcut exploration of the component of \(V_n\)
		is dominated by the following branching process.  A particle of mass \(w\)
		represents an explored blob of size \(w\).  Independently for each possible
		target blob \(j\), it has a Poisson number of children of mass \(W_{n,j}\)
		with mean \((1+\eps)q\gl\,wW_{n,j}/n\).  Equivalently, the total number of
		children is Poisson with mean \((1+\eps)q\gl w\), and their masses are
		independent with law \(W_n^\star\).  Its mean mass reproduction factor is
		\(\rho_n\le\bar\rho<1\), so it is subcritical on \(E_n(B_\zeta)\).  This ideal process
		dominates the actual breadth-first search because Bernoulli shortcut
		indicators are dominated by the corresponding Poisson variables, while
		repeated labels, multiple hits of the same label, edges to already seen
		labels, and the suppression of already explored labels can only decrease the
		actual discovered component mass.
		
		Let \(T(w)\) be the total mass of this branching process started from one
		particle of mass \(w\), and put \(M_1=\EE_n T(W_n^\star)\),
		\(M_2=\EE_n T(W_n^\star)^2\).  With the above \(\rho_n\),
		\[
		M_1=\frac{\EE_n W_n^\star}{1-\rho_n}.
		\]
		Indeed, if \(a=(1+\eps)q\gl\), then
		\(\EE_n T(w)=w+awM_1\), and averaging over \(w=W_n^\star\) gives this
		identity.  The same first-generation decomposition gives the needed second
		moment recursion.  Conditional on the first particle having mass \(w\), the
			offspring contribution is  \(T(w)=w+\sum_{i\le K}T_i\), where
			\(K\sim\Poi(aw)\) is the number of children of the root particle and \(T_i\)
			is the total progeny mass of the \(i\)-th child, the \(T_i\) are i.i.d.\ with
			the law of \(T(W_n^\star)\), independent of \(K\), by the branching property.
			The compound-Poisson moment identities
			\(\EE_n\sum_{i\le K}T_i=awM_1\) and
			\(\EE_n\big(\sum_{i\le K}T_i\big)^2=awM_2+(awM_1)^2\) then give
			\[
			\EE_n[T(w)^2]
			=
			w^2+2aw^2M_1+(awM_1)^2+awM_2 .
			\]
		Averaging over \(w=W_n^\star\) gives
		\[
		M_2
		\le
		C\EE_n[(W_n^\star)^2]+\rho_n M_2,
		\]
		and therefore \(M_2\le C_\zeta\) on \(E_n(B_\zeta)\), where
		\(C_\zeta<\infty\) is deterministic and may depend on
		\(B_\zeta,\bar\rho,q,\gl\), but not on \(n\).  Enlarging \(C_\zeta\) if
		necessary also bounds \(M_1\).  The event \(E_n(B_\zeta)\) is measurable with
		respect to the blob masses.  Thus, on \(E_n(B_\zeta)\), conditioning on the blob masses
		and then averaging over the shortcut graph and \(V_n\) gives
		\[
		\EE\big[|\cC_n(V_n)|\mid (W_{n,i})_i\big]\le C_\zeta,
		\qquad
		\EE\big[|\cC_n(V_n)|^2\mid (W_{n,i})_i\big]\le C_\zeta .
		\]
		Writing the expectation over the independent uniform vertex explicitly, the
		identities in \eqref{eq:uniform-component-moments} imply that, for
		\[
		Y_{n,2}:=\frac1n\sum_b\bar S_{n,b}^2,\qquad
		Y_{n,3}:=\frac1n\sum_b\bar S_{n,b}^3,
		\]
		we have
		\[
		\EE[Y_{n,2}\ind_{E_n(B_\zeta)}]\le C_\zeta,
		\qquad
		\EE[Y_{n,3}\ind_{E_n(B_\zeta)}]\le C_\zeta.
		\]
		Therefore, for \(r=2,3\) and \(M>0\),
		\[
		\limsup_{n\to\infty}\PP(Y_{n,r}>M)
		\le
		\zeta+\frac{C_\zeta}{M}.
		\]
		For each fixed \(\zeta\), let \(M\to\infty\), and then let
		\(\zeta\downarrow0\).  Equivalently, the order of limits is first
		\(n\to\infty\), then \(M\to\infty\), and finally
		\(\zeta\downarrow0\).  This proves the two \(O_{\PP}(1)\) bounds in
		\eqref{eq:S-moments} without requiring a fixed deterministic moment bound on an
		event whose probability tends to one.
	\end{proof}
	
	\begin{rem}
		The condition \(q<1/3\) enters the proof only through the third empirical
		moment of the retained-tree blob sizes.  Indeed,
		\eqref{eq:uniform-family-moments} with \(p=q\) and \(r=3\) gives
		\[
		\frac1n\sum_i W_{n,i}^3=O_{\PP}(1)
		\qquad\text{precisely in the regime }3q<1
		\]
		up to the logarithmic boundary case.  Equivalently, this is the finite
		second-moment condition for the size-biased blob law \(W_n^\star\), and it is
		used above to bound the second moment of the dominating branching-process
		total mass.  The subcritical domination itself uses the separate condition
		\(q\gl/(1-2q)<1\), namely \(q<1/(\gl+2)\).  Later appearances of the
		\(q<1/3\) restriction are inherited from the third-susceptibility estimate
		\(\frac1n\sum_a S_{n,a}^3=O_{\PP}(1)\), in particular in the second-moment
		bound for the decoration variables.
	\end{rem}
	
	\begin{lem}[Singly attached decorations]
		\label{lem:singly-attached-decorations}
		Assume \eqref{eq:arch-q-range}. With probability tending to one,
		\(\cC_n^{\Root}\) can be obtained from the skeleton
		\(\cB_n^{\Root}\) by attaching to each \(u\in\cB_n^{\Root}\) a finite connected
		graph \(D_u\), with \(D_u\cap\cB_n^{\Root}=\{u\}\), and with no connected
		decoration attached to two distinct skeleton vertices.
	\end{lem}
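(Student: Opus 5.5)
The argument is a first-moment count over the shortcut edges incident to the root blob, carried out conditionally on everything except those edges. Let \(\mathcal F_n\) be the sigma-field generated by the tree, the tree-percolation indicators, and all shortcut variables (retained or not) with no endpoint in \(\cB_n^{\Root}\). On \(\mathcal F_n\) the root blob \(A=\cB_n^{\Root}\) is fixed. The external components and their masses \((S_{n,a})_a\) from Lemma~\ref{lem:external-third-susceptibility} are also fixed. The retained shortcut edges with at least one endpoint in \(A\) are still independent Bernoulli\((q\gl/n)\) indicators, independent of \(\mathcal F_n\).

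The component \(\cC_n^{\Root}\) is the skeleton \(A\), together with the retained tree edges inside \(A\), the retained shortcut edges inside \(A\), and every external component joined to \(A\) by some retained shortcut edge. The conclusion of the lemma therefore holds on the intersection of two events. The first is that there is no retained shortcut edge with both endpoints in \(A\). The second is that no external component is joined to two distinct vertices of \(A\), counting also two distinct edges from the same external component to \(A\). On that intersection each attached external component hangs off a single vertex \(u\). We set \(D_u:=\{u\}\) together with all external components attached at \(u\). Each \(D_u\) is finite, \(D_u\cap A=\{u\}\), and no connected decoration meets two skeleton vertices. The tree edges inside \(A\) are exactly the skeleton edges, because \(A\) is a retained tree cluster.

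\emph{Step 1 (internal shortcuts).} The conditional expected number of retained shortcut edges inside \(A\) is at most \(|A|^2 q\gl/n\). By Proposition~\ref{prop:rrt-blob-input}(a), with \(p=q\), we have \(|A|=O_{\PP}(n^q)\). Since \(q<1/3<1/2\), the bound is \(O_{\PP}(n^{2q-1})=o_{\PP}(1)\). Taking \(\EE[1\wedge\cdot]\) shows the event fails with probability \(o(1)\).

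\emph{Step 2 (double attachments).} Fix an external component \(a\). The number of retained shortcut edges between \(a\) and \(A\) is a sum of \(S_{n,a}|A|\) independent Bernoulli\((q\gl/n)\) indicators. The conditional probability that this count is at least \(2\) is at most \(\binom{S_{n,a}|A|}{2}(q\gl/n)^2\le (q\gl)^2 S_{n,a}^2|A|^2/n^2\). This bound covers both two attachment vertices and two edges to the same vertex; the latter is harmless anyway but is absorbed here. Summing over \(a\) and applying Lemma~\ref{lem:external-third-susceptibility} gives
\[
\frac{|A|^2}{n^2}\sum_a S_{n,a}^2 = O_{\PP}\big(n^{2q}\cdot n^{-1}\big)=o_{\PP}(1).
\]
Only the second-moment bound in \eqref{eq:S-moments} is used here. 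Again \(\EE[1\wedge\cdot]\) turns this into an unconditional \(o(1)\) bound.

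\emph{Step 3 (finiteness and bookkeeping).} The main point to check is the measurability and independence claim in the first paragraph. Deleting \(A\) and the shortcut edges incident to it must leave the external components determined by \(\mathcal F_n\). They are: the external components use only retained tree edges outside \(A\) and retained shortcut edges with neither endpoint in \(A\). The shortcut indicators at \(A\) must also remain independent Bernoulli variables given \(\mathcal F_n\). They do, because the shortcut layer is independent of the tree and percolation, and \(A\) is \(\mathcal F_n\)-measurable. Since \(\mathcal F_n\) contains the tree, conditioning on it does not require Lemma~\ref{lem:root-blob-genealogy} here. Finiteness of each \(D_u\) is automatic because the graph is finite. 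The final probability is at most the sum of the two \(o(1)\) terms from Steps 1 and 2, which proves the lemma.
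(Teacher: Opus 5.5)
Your proposal is correct and follows essentially the same route as the paper. You condition on everything except the shortcut indicators touching the root blob. You kill internal shortcuts by the first-moment bound \(O_{\PP}(n^{2q-1})\), and double attachments by the bound \(|\cB_n^{\Root}|^2n^{-2}\sum_a S_{n,a}^2=O_{\PP}(n^{2q-1})\) from Lemma~\ref{lem:external-third-susceptibility}. The differences are cosmetic: you count pairs of retained edges per external component rather than triples \((a,u,v)\), you reveal the whole tree (harmless here since the internal genealogy is only needed in Lemma~\ref{lem:decoration-moments}), and you take \(|\cB_n^{\Root}|=O_{\PP}(n^q)\) directly from Proposition~\ref{prop:rrt-blob-input}(a) rather than through the component picture.
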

	
	\begin{proof}
		Contract the retained recursive-tree blobs. Conditional on the blob sizes, the
		retained shortcut graph is a rank-one graph with edge probabilities
		\[
		1-\exp\left\{-\frac{q\gl W_iW_j}{n}+o(W_iW_j/n)\right\}.
		\]
		We first record the size scale of the root full component.  Let \(R_n\) be
		the rank of the root blob among the retained tree blobs.  By
		Proposition~\ref{prop:rrt-blob-input}, \(R_n\) is tight and
		\[
		n^{-q}|\cB_n^{\Root}|\to Z_1>0
		\qquad\text{a.s.}
		\]
		On the event \(\{R_n\le K\}\), Theorem~\ref{thm:subcritical-blob-picture},
		applied with \(p=q\), identifies the component of the root blob with one of
		the finitely many components seeded by the first \(K\) blobs.  The fixed-\(K\)
		component asymptotics in that theorem give its total vertex mass
		\[
		|\cC_n^{\Root}|=O_{\PP}(n^q)=O_{\PP}(|\cB_n^{\Root}|),
		\]
		after first choosing \(K\) large enough and then using the positive root-blob
		limit.  In particular \(|\cB_n^{\Root}|=O_{\PP}(n^q)\).
		
		There are now two bad events to exclude.  The first is the existence of a
		retained shortcut edge whose two endpoints both lie in the root blob.  Such
		an edge would add a shortcut inside the skeleton.  Conditional on the root
		blob, the number of these shortcut edges has expectation
		\[
		O_{\PP}\left(\frac{|\cB_n^{\Root}|^2}{n}\right)
		=
		O_{\PP}(n^{2q-1})=o_{\PP}(1).
		\]
		Hence this first bad event has probability \(o(1)\) by Markov's inequality.
		
		The second bad event is that one external component attaches to two different
		root-blob vertices.  Delete the root blob and all shortcut edges incident to
		it, and let \(S_{n,a}\) be the resulting external component masses.  Conditional
		on these external components and on the root blob, the probability that a
		fixed root-blob vertex has at least one retained shortcut edge to component
		\(a\) is at most \(CS_{n,a}/n\).  Therefore the expected number of triples
		\((a,u,v)\), with \(u\ne v\in\cB_n^{\Root}\), for which component \(a\)
		touches both \(u\) and \(v\), is bounded by
		\[
		C\frac{|\cB_n^{\Root}|^2}{n^2}
		\sum_a S_{n,a}^2
		=
		O_{\PP}(n^{2q-1})=o_{\PP}(1),
		\]
		where \(\sum_aS_{n,a}^2=O_{\PP}(n)\) follows from
		Lemma~\ref{lem:external-third-susceptibility}.  Markov's inequality excludes
		this second bad event with probability tending to one.
		
		On the complement of these two bad events, the root blob keeps its original
		tree structure as the skeleton.  Every external component that meets the root
		component has a unique attachment vertex \(u\in\cB_n^{\Root}\); if several
		external components attach to the same \(u\), their union together with \(u\)
		is the decoration \(D_u\).  These decorations may contain shortcut edges and
		cycles internally, but each intersects the skeleton only at \(u\), and no
		connected decoration is attached to two distinct skeleton vertices.
	\end{proof}
	
	Recall the external components of Lemma~\ref{lem:external-third-susceptibility}:
		the connected components left after deleting from \(G_n^{(q)}\) the vertices of
		\(\cB_n^{\Root}\) and all retained shortcut edges with at least one endpoint in
		\(\cB_n^{\Root}\).  Use the parent variables \((P_j)_{j\ge2}\) and
		tree-retention indicators \((R_j)_{j\ge2}\) from the proof of
		Lemma~\ref{lem:root-blob-genealogy}, and write \(Q_{x,y}\) for the indicator
		that the shortcut-layer edge \(\{x,y\}\) is present after \(q\)-percolation.
		Put \(A=\cB_n^{\Root}\), and let \(\cF_n\) be the sigma-field generated by
		\begin{enumeratei}
			\item the vertex set \(A\), and hence \(m=|A|\) and the birth order inherited
			from the original labels;
			\item all pairs \((P_j,R_j)\) with \(j\notin A\); and
			\item all retained-shortcut indicators \(Q_{x,y}\) with \(x,y\notin A\).
		\end{enumeratei}
		The retained tree edges with both endpoints outside \(A\), the retained
		shortcut graph on \(A^c\), and hence the external graph and its component
		partition are all \(\cF_n\)-measurable.  In particular, the vertex set and the
		mass \(S_{n,a}\) of every external component \(a\) are
		\(\cF_n\)-measurable.

		Two families of variables are deliberately not revealed by \(\cF_n\): the
		internal parent choices \((P_j:j\in A\setminus\{1\})\), and the shortcut
		indicators \((Q_{u,x}:u\in A,x\notin A)\) between the root blob and the
		exterior.  The construction and Lemma~\ref{lem:root-blob-genealogy} therefore
		give the following three conditional properties:
		\begin{enumeratei}
			\item every external component and its mass \(S_{n,a}\) are
			\(\cF_n\)-measurable;
			\item conditional on \(\cF_n\), after relabeling \(A\) in birth order, the
			unrevealed internal genealogy is a uniform attachment tree; and
			\item conditional on \(\cF_n\), the shortcut families
			\((Q_{u,x}:x\in a)\), indexed by pairs \((u,a)\), are disjoint families of
			independent variables and are independent of the internal genealogy.
		\end{enumeratei}
		For \(u\in\cB_n^{\Root}\), define the raw mass
	\[
	\widetilde X_u
	=
	1+\sum_a S_{n,a}I_{u,a},
	\]
	where \(I_{u,a}\) is the indicator that at least one unrevealed retained
	shortcut edge joins \(u\) to the external component \(a\).  This raw
	definition allows the same external component to be counted for more than one
	root-blob vertex.
	
	\begin{lem}[Decoration moments]
		\label{lem:decoration-moments}
		Conditional on \(\cF_n\), the variables
		\((\widetilde X_u:u\in\cB_n^{\Root})\) are independent.  Moreover, they are
		conditionally independent of the internal root-blob genealogy.  On the event
		of Lemma~\ref{lem:singly-attached-decorations}, put
		\[
		X_u:=|D_u|,\qquad u\in\cB_n^{\Root},
		\]
		where \(D_u\) includes the skeleton vertex \(u\).  Off this event set
		\(X_u=1\) for definiteness; this convention only affects an \(o(1)\) event.
		Then
		\begin{equation}
			\label{eq:decoration-coupling}
			\PP\left(
			X_u=\widetilde X_u\text{ for all }u\in\cB_n^{\Root}
			\right)\to1.
		\end{equation}
		Moreover, for every \(\zeta>0\), there exists a finite deterministic
		\(C_\zeta>1\) such that the \(\cF_n\)-measurable event
		\[
		E_n^{\mathrm{mom}}(C_\zeta)
		:=
		\left\{
		1\le \EE[\widetilde X_u\mid\cF_n]\le C_\zeta
		\text{ and }
		\EE[\widetilde X_u^2\mid\cF_n]\le C_\zeta
		\text{ for all }u\in\cB_n^{\Root}
		\right\}
		\]
		satisfies
		\begin{equation}
			\label{eq:decoration-moments}
			\limsup_{n\to\infty}
			\PP\left(E_n^{\mathrm{mom}}(C_\zeta)^c\right)
			\le\zeta .
		\end{equation}
		In addition,
		\begin{equation}
			\label{eq:max-decoration-small}
			\max_{u\in\cB_n^{\Root}} X_u=o_{\PP}(|\cB_n^{\Root}|).
		\end{equation}
	\end{lem}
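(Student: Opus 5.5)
The proof splits into four parts: conditional independence, the coupling \eqref{eq:decoration-coupling}, the moment event \eqref{eq:decoration-moments}, and the maximum bound \eqref{eq:max-decoration-small}.

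\textbf{Conditional independence.} Given \(\cF_n\), the external partition \(\{a\}\) and the masses \(S_{n,a}\) are fixed. Each \(\widetilde X_u\) is a function only of the family \((Q_{u,x}:x\notin A)\). By property (iii) these families are disjoint over \(u\), consist of independent Bernoulli variables, and are independent of the internal genealogy. This gives both independence statements at once.

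\textbf{Coupling.} On the event of Lemma~\ref{lem:singly-attached-decorations}, every external component touching \(A\) is joined to exactly one root-blob vertex. Hence \(D_u\) is \(\{u\}\) together with the components \(a\) having \(I_{u,a}=1\), and \(|D_u|=1+\sum_a S_{n,a}I_{u,a}=\widetilde X_u\). That lemma holds with probability tending to one, which gives \eqref{eq:decoration-coupling}.

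\textbf{Moment bounds.} Conditional on \(\cF_n\), \(\PP(I_{u,a}=1\mid\cF_n)\le q\gl S_{n,a}/n\), so
\[
\EE[\widetilde X_u\mid\cF_n]\le 1+\frac{q\gl}{n}\sum_aS_{n,a}^2 ,
\]
and the lower bound \(\ge1\) is trivial. For the second moment I will expand \((1+\sum_aS_{n,a}I_{u,a})^2\). The indicators \(I_{u,a}\) are independent over \(a\), since they depend on disjoint edge sets. This gives
\[
\EE[\widetilde X_u^2\mid\cF_n]\le 1+3\frac{q\gl}{n}\sum_aS_{n,a}^2+\frac{q\gl}{n}\sum_aS_{n,a}^3+\Big(\frac{q\gl}{n}\sum_aS_{n,a}^2\Big)^2 .
\]
The right-hand sides are \(\cF_n\)-measurable and uniform in \(u\). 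Lemma~\ref{lem:external-third-susceptibility} makes them \(O_{\PP}(1)\); this is where \(q<1/3\) enters, through the third moment. Choosing \(C_\zeta\) as a tightness bound at level \(\zeta\) yields \eqref{eq:decoration-moments}.

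\textbf{Maximum bound.} This is the step I expect to be the main obstacle, since decorations can be large at the scale \(n^q\). I will argue as follows. The total decoration mass is \(\sum_u(X_u-1)=|\cC_n^{\Root}|-|\cB_n^{\Root}|\), and it is \(O_{\PP}(n^q)\), but that alone is not enough. Instead I will bound the largest external component. Any external component \(a\) with \(S_{n,a}>\eps n^q\) must contain a blob of rank at most \(L\), by Lemma~\ref{lem:subcritical-maximal-bound} applied with \(p=q\), after choosing \(L\) large. By Theorem~\ref{thm:subcritical-blob-picture}(c) and Lemma~\ref{lem:top-blobs-separated}, the components containing the top \(L\) blobs (other than the root blob) are, whp, not connected to the root component. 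So whp every external component attached to \(A\) has mass at most \(\eps n^q\).

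It remains to control a single \(X_u\), which may collect many small components. I will bound it through its conditional mean and variance given \(\cF_n\):
\[
\EE[X_u-1\mid\cF_n]\le\frac{q\gl}{n}\sum_aS_{n,a}^2=O_{\PP}(1),
\qquad
\Var(X_u\mid\cF_n)\le \frac{q\gl}{n}\sum_aS_{n,a}^3\ind\{S_{n,a}\le\eps n^q\}.
\]
A union bound over the \(O_{\PP}(n^q)\) skeleton vertices then gives
\[
\PP\Big(\max_u (X_u-1)>2\eps n^q\Big)\le |A|\cdot\frac{C\,(\max_a S_{n,a})\,n^{-1}\sum_aS_{n,a}^2}{\eps^2n^{2q}}\le\frac{C\eps n^q\cdot n^q}{\eps^2n^{2q}}=O(1/\eps).
\]
This is not small as written. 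To fix it I will use a fourth-moment or exponential (Bernstein) bound for the sum of independent bounded terms \(S_{n,a}I_{u,a}\), each bounded by \(\eps n^q\) and with mean \(O(1)\). Bernstein gives a tail of order \(\exp(-c\,\eps n^q/(\eps n^q))=\exp(-c/\eps)\cdot\) a factor that is too weak alone. I will therefore combine it with the observation that a component \(a\) attaches to \(u\) at all only with probability \(O(S_{n,a}/n)\). Then \(\PP(\exists\,u,a: I_{u,a}=1,\ S_{n,a}\ge\eps' n^q)\le |A|\,n^{-1}\sum_a S_{n,a}\ind\{S_{n,a}\ge\eps'n^q\}\), which should be \(o(1)\) after truncating via the ranked-tail estimate \eqref{eq:ranked-family-r-tail}. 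What remains is a sum of terms of size at most \(\eps' n^q\) with tiny means, and Bernstein handles it uniformly over \(u\).
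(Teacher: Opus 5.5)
Your first three parts match the paper's proof. Conditional independence follows from the disjoint root-to-exterior shortcut families of property (iii). The coupling holds on the single-attachment event. The moment event uses the \(\cF_n\)-measurable envelope built from \(\frac1n\sum_aS_{n,a}^2\) and \(\frac1n\sum_aS_{n,a}^3\), which is tight by Lemma~\ref{lem:external-third-susceptibility}.

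The gap is in \eqref{eq:max-decoration-small}. Your union bound comes out as \(O(1/\eps)\) only because you weakened the conditional variance to \(\max_aS_{n,a}\cdot\frac{q\gl}{n}\sum_aS_{n,a}^2=O(\eps n^q)\). That throws away the bound \(\frac1n\sum_aS_{n,a}^3=O_{\PP}(1)\) you had just used for the second moment. The repair you sketch does not close as written. With a fixed truncation level \(\eps'n^q\), Bernstein's exponent is of order \(\eps/\eps'\), a constant. A constant cannot absorb a union bound over \(|\cB_n^{\Root}|\asymp n^q\) skeleton vertices; you would need a vanishing level such as \(n^q/\log^2 n\), and the means are \(O(1)\), not tiny. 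The appeal to Lemma~\ref{lem:subcritical-maximal-bound} is also imprecise. That lemma concerns components of the full contracted graph, whereas an external component hanging off the root blob sits inside the root's full component, which contains the root blob itself. You would need the induced-subgraph form of the argument behind Theorem~\ref{thm:conditional-rank-one-bound}.

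None of this machinery is needed. Put \(m=|\cB_n^{\Root}|\), which is \(\cF_n\)-measurable. On \(E_n^{\mathrm{mom}}(C_\zeta)\), Markov's inequality applied to \(\widetilde X_u^2\) and a union bound give
\[
\PP\left(\max_{u\in\cB_n^{\Root}}\widetilde X_u>\eps m\,\middle|\,\cF_n\right)\le\frac{mC_\zeta}{\eps^2m^2}=\frac{C_\zeta}{\eps^2m}.
\]
The threshold \(\eps m\) is itself of order \(m\), so its square beats the union-bound factor \(m\). Add \(\PP(E_n^{\mathrm{mom}}(C_\zeta)^c)+\PP(m\le M)\) and use \(m\to\infty\) in probability. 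Letting \(n\to\infty\), then \(M\to\infty\), then \(\zeta\downarrow0\), gives \(\max_u\widetilde X_u=o_{\PP}(m)\). The coupling \eqref{eq:decoration-coupling} then transfers this to \(X_u\). This is exactly the paper's argument, and it uses only the moment event you had already built.
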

	
	\begin{proof}
		By property (iii) of the construction of \(\cF_n\), conditional on
		\(\cF_n\), the collections \((Q_{u,x}:x\in a)\), as \((u,a)\) varies, are
		disjoint collections of independent retained-shortcut variables.  Each
		\(I_{u,a}\) is a function only of the corresponding collection.  Hence the
		indicators \((I_{u,a})_{u,a}\), and in particular the variables
		\((\widetilde X_u:u\in\cB_n^{\Root})\), are conditionally independent.  By
		properties (ii)--(iii), their conditional joint law is also independent of the
		unrevealed internal root-blob genealogy, whose conditional law is the uniform
		attachment law by Lemma~\ref{lem:root-blob-genealogy}.
		
		The success probabilities satisfy, uniformly in \(u\) and \(a\),
		\begin{equation}
			\label{eq:attachment-probability}
			\pi_{n,u,a}:=\PP(I_{u,a}=1\mid\cF_n)
			\le
			C\frac{S_{n,a}}n .
		\end{equation}
		The raw variables intentionally do not enforce single attachment: the same
		external component \(a\) may contribute to several different
		\(\widetilde X_u\)'s.  Actual reuse is excluded only on the high-probability
		event from Lemma~\ref{lem:singly-attached-decorations}.
		On the high-probability event of Lemma~\ref{lem:singly-attached-decorations},
		there are no shortcut edges inside the root blob and no external component is
		attached to two distinct root-blob vertices. On this event, the actual
		decoration mass \(X_u\) equals the raw mass \(\widetilde X_u\) for every
		\(u\in\cB_n^{\Root}\). This proves the coupling
		\eqref{eq:decoration-coupling}.
		
		The conditional means need not be identical in \(u\), but they are uniformly
		bounded. Indeed,
		\[
		\mu_{n,u}:=\EE[\widetilde X_u\mid\cF_n]
		=
		1+\sum_a S_{n,a}\pi_{n,u,a}
		\le
		1+\frac{C}{n}\sum_aS_{n,a}^2,
		\]
		and \(\mu_{n,u}\ge1\). For the second moment, conditional
		independence and \eqref{eq:attachment-probability} give
		\[
		\begin{aligned}
			\EE[\widetilde X_u^2\mid\cF_n]
			&\le
			C\left[
			1+
			\sum_a S_{n,a}^2\pi_{n,u,a}
			+
			\left(\sum_aS_{n,a}\pi_{n,u,a}\right)^2
			\right] \\
			&\le
			C\left[
			1+
			\frac1n\sum_aS_{n,a}^3
			+
			\left(\frac1n\sum_aS_{n,a}^2\right)^2
			\right].
		\end{aligned}
		\]
		The right-hand sides are bounded, uniformly over \(u\), by a common
		\(\cF_n\)-measurable envelope of the form
		\[
		C\left[
		1+\frac1n\sum_aS_{n,a}^3
		+\left(\frac1n\sum_aS_{n,a}^2\right)^2
		\right].
		\]
		This envelope is \(O_{\PP}(1)\) by
		Lemma~\ref{lem:external-third-susceptibility}.  Since
		\(\widetilde X_u\ge1\), the lower conditional-mean bound is deterministic.
		Thus, for every \(\zeta>0\), tightness of the common envelope permits a finite
		deterministic \(C_\zeta>1\) for which
		\eqref{eq:decoration-moments} holds.
		
		Finally, put \(m=|\cB_n^{\Root}|\).  Fix \(\eps>0\) and an auxiliary
		error \(\zeta>0\), and choose \(C_\zeta\) as above.  On
		\(E_n^{\mathrm{mom}}(C_\zeta)\), Markov's inequality applied to second
		moments and a union bound give
		\[
		\PP\left(\max_{u\in\cB_n^{\Root}}\widetilde X_u>\eps m\mid\cF_n\right)
		\le
		\frac{mC_\zeta}{\eps^2m^2}
		=
		\frac{C_\zeta}{\eps^2m}.
		\]
		Thus, for every fixed \(M\),
		\[
		\PP\left(\max_{u\in\cB_n^{\Root}}\widetilde X_u>\eps m\right)
		\le
		\PP\left(E_n^{\mathrm{mom}}(C_\zeta)^c\right)
		+\PP(m\le M)+\frac{C_\zeta}{\eps^2M}.
		\]
		Since \(m\to\infty\) in probability, first letting \(n\to\infty\), then
		\(M\to\infty\), and finally \(\zeta\downarrow0\) gives
		\(\max_u\widetilde X_u=o_{\PP}(m)\).  The coupling
		\eqref{eq:decoration-coupling} then gives \eqref{eq:max-decoration-small}.
	\end{proof}
	
	The next lemma is purely deterministic.  It does not assert that
	\(\cC_n^{\Root}\) is a tree.  The tree is only the skeleton
	\(\cB_n^{\Root}\).  Since every decoration has a single attachment point,
	deleting a skeleton vertex separates the full graph according to the branches
	of the skeleton, up to the pieces lying inside the deleted vertex's own
	decoration.  Thus weighted branch masses on the skeleton approximate ordinary
	graph-Jordan branch sizes in the full decorated component.
	
	\begin{lem}[Graph Jordan is approximated by weighted Jordan]
		\label{lem:graph-to-weighted-jordan}
		Suppose the conclusion of Lemma~\ref{lem:singly-attached-decorations} holds.
		Write \(X_u:=|D_u|\) for \(u\in\cB_n^{\Root}\), and
		\(X(A):=\sum_{u\in A}X_u\).  Put
		\[
		\Delta_n:=\max_{u\in\cB_n^{\Root}}|D_u|.
		\]
		Then for every skeleton vertex \(v\in\cB_n^{\Root}\),
		\begin{equation}
			\label{eq:graph-weighted-error}
			0
			\le
			\psi_{\cC_n^{\Root}}(v)
			-
			\psi_{\cB_n^{\Root}}^X(v)
			\le
			\Delta_n .
		\end{equation}
		Moreover, fix a skeleton vertex \(v_{\mathrm{skel}}\), an integer \(L\ge1\), and
		\(\delta>0\).
		If
		\begin{equation}
			\label{eq:graph-transfer-gap}
			\Delta_n\le \frac{\delta}{4}X(\cB_n^{\Root}),
			\qquad
			\psi_{\cB_n^{\Root}}^X(v_{\mathrm{skel}})
			\le
			(1-\delta)X(\cB_n^{\Root}),
		\end{equation}
		and
		\begin{equation}
			\label{eq:graph-transfer-count}
			\#\left\{
			v\in\cB_n^{\Root}:
			\psi_{\cB_n^{\Root}}^X(v)
			\le
			\psi_{\cB_n^{\Root}}^X(v_{\mathrm{skel}})+\delta X(\cB_n^{\Root})
			\right\}
			\le L,
		\end{equation}
		then \(v_{\mathrm{skel}}\in \TopJ_L(\cC_n^{\Root})\).
	\end{lem}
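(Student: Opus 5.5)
The plan is to first establish the structural identity behind \eqref{eq:graph-weighted-error}, and then derive the transfer statement by comparing scores. Work on the event of Lemma~\ref{lem:singly-attached-decorations}. Then \(\cC_n^{\Root}\) is the disjoint union of the decorations \(D_u\), \(u\in\cB_n^{\Root}\). Every edge of \(\cC_n^{\Root}\) either lies inside some \(D_u\) or is a skeleton tree edge: there are no shortcut edges internal to the root blob, and no external component touches two skeleton vertices. Fix a skeleton vertex \(v\) and let \(A_1,\dots,A_k\) be the components of \(\cB_n^{\Root}\setminus\{v\}\). The first step is to show that the components of \(\cC_n^{\Root}\setminus\{v\}\) are of two kinds:
\begin{itemize}
\item the sets \(\bigcup_{u\in A_i}D_u\), which have size \(X(A_i)\);
\item pieces of \(D_v\setminus\{v\}\), each of size at most \(|D_v|-1<\Delta_n\).
\end{itemize}
Connectivity of each \(\bigcup_{u\in A_i}D_u\) follows because \(A_i\) is connected through skeleton edges and each \(D_u\) is connected and contains \(u\). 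Separation between different \(i\), and from \(D_v\), holds because any edge leaving \(D_u\) is a skeleton edge at \(u\). The only skeleton edges joining different \(A_i\)'s pass through \(v\), and \(D_v\) meets the rest only at \(v\).

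From this decomposition, \(\psi_{\cC_n^{\Root}}(v)=\max\{\psi^X_{\cB_n^{\Root}}(v),\ \text{largest piece of }D_v\setminus\{v\}\}\). This gives \(0\le\psi_{\cC}(v)-\psi^X(v)\le\Delta_n\), and in fact the stronger two-sided statement that \(\psi_{\cC}(v)\) lies in \([\psi^X(v),\max(\psi^X(v),\Delta_n)]\). Note also that \(|\cC_n^{\Root}|=X(\cB_n^{\Root})=:T\).

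For the second assertion, put \(\Psi(v)=\psi_{\cC}(v)\). We must show that fewer than \(L\) vertices of \(\cC_n^{\Root}\) have a \(\Psi\)-value no larger than \(\Psi(v_{\mathrm{skel}})\), and handle ties. Ties are the delicate point. Since \(\TopJ_L\) breaks ties by a deterministic rule, I would show something stronger: every vertex \(w\) with \(\Psi(w)\le\Psi(v_{\mathrm{skel}})\) lies in the set counted in \eqref{eq:graph-transfer-count}. Then at most \(L\) vertices, including \(v_{\mathrm{skel}}\) itself, have score \(\le\Psi(v_{\mathrm{skel}})\), so \(v_{\mathrm{skel}}\) is among the \(L\) smallest regardless of tie-breaking.

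First, \(\Psi(v_{\mathrm{skel}})\le\psi^X(v_{\mathrm{skel}})+\Delta_n\le(1-\delta)T+\delta T/4\). Now consider the two kinds of vertex \(w\).
\begin{itemize}
\item \emph{Non-skeleton vertices.} If \(w\in D_u\setminus\{u\}\), then removing \(w\) leaves a component containing all of \(\cC_n^{\Root}\setminus D_u\). Hence \(\Psi(w)\ge T-\Delta_n\ge(1-\delta/4)T\), which strictly exceeds \((1-3\delta/4)T\ge\Psi(v_{\mathrm{skel}})\) because \(T>0\). So \(w\) cannot tie or beat \(v_{\mathrm{skel}}\).
\item \emph{Skeleton vertices.} If \(w\) is a skeleton vertex with \(\Psi(w)\le\Psi(v_{\mathrm{skel}})\), then \(\psi^X(w)\le\Psi(w)\le\psi^X(v_{\mathrm{skel}})+\Delta_n\le\psi^X(v_{\mathrm{skel}})+\delta T\), so \(w\) is in the counted set.
\end{itemize}
Together these show the candidate set has at most \(L\) elements, so \(v_{\mathrm{skel}}\in\TopJ_L(\cC_n^{\Root})\); if \(|\cC_n^{\Root}|<L\), the statement is trivial.

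I expect the main obstacle to be the rigorous verification of the component decomposition in the first step. In particular, one must check carefully that each decoration \(D_u\) has all of its outside edges only at \(u\), which means unpacking the conclusion of Lemma~\ref{lem:singly-attached-decorations}, including the claims of no internal shortcut edges and no double attachment. The remaining steps are simple arithmetic with the margins \(\delta/4\) and \(\delta\).
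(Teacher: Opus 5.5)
Your proposal is correct and follows the paper's proof essentially step for step. You use the same decomposition of \(\cC_n^{\Root}\setminus\{v\}\) into the decorated skeleton branches (each of size \(X(A)\)) plus pieces of \(D_v\setminus\{v\}\), and the same lower bound \(X(\cB_n^{\Root})-\Delta_n\) for non-skeleton vertices. You also make the same key observation: at most \(L\) vertices have graph score at most \(\psi_{\cC_n^{\Root}}(v_{\mathrm{skel}})\), so the tie rule does not matter. Your check that every edge leaving \(D_u\) is a skeleton edge at \(u\) spells out a step the paper leaves implicit.
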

	
	\begin{proof}
		For a skeleton vertex \(v\), deleting \(v\) from \(\cC_n^{\Root}\) separates
		the full graph along the components of
		\(\cB_n^{\Root}\setminus\{v\}\).  For each such skeleton branch \(A\), the
		corresponding graph component has exactly the vertices in the decorations
		\(\{D_u:u\in A\}\), and therefore has size \(X(A)\).  This exact matching is
		where the single-attachment property is used: decorations may have internal
		shortcut edges, but they do not connect two distinct skeleton branches.  The
		only remaining graph components are contained in \(D_v\setminus\{v\}\).  Let
		\(M_v\) be the largest of their sizes, with \(M_v=0\) if there is no such
		component.  Then \(M_v\le |D_v|\le\Delta_n\), and
		\[
		\psi_{\cC_n^{\Root}}(v)
		=
		\max\left\{
		\psi_{\cB_n^{\Root}}^X(v),M_v
		\right\}.
		\]
		This proves \eqref{eq:graph-weighted-error}.
		
		Now let \(z\) be a non-skeleton vertex in the decoration attached at \(u\). On
		deleting \(z\), the component containing the attachment vertex \(u\) contains
		the whole skeleton and every decoration \(D_w\) with \(w\ne u\).  The only
		vertices it can miss lie inside \(D_u\), and their number is at most
		\(|D_u|\le\Delta_n\).  Therefore
		\[
		\psi_{\cC_n^{\Root}}(z)
		\ge
		X(\cB_n^{\Root})-\Delta_n .
		\]
		This is the reason non-skeleton vertices cannot beat the root once the
		largest decoration is negligible compared with the total weighted skeleton
		mass.
		For the deterministic transfer statement, assume
		\eqref{eq:graph-transfer-gap}--\eqref{eq:graph-transfer-count}. The graph
		score of \(v_{\mathrm{skel}}\) is at most
		\[
		\psi_{\cC_n^{\Root}}(v_{\mathrm{skel}})
		\le
		\psi_{\cB_n^{\Root}}^X(v_{\mathrm{skel}})+\Delta_n
		\le
		\left(1-\frac{3\delta}{4}\right)X(\cB_n^{\Root}).
		\]
		Every non-skeleton vertex has graph score at least
		\[
		X(\cB_n^{\Root})-\Delta_n
		\ge
		\left(1-\frac{\delta}{4}\right)X(\cB_n^{\Root}),
		\]
		so no non-skeleton vertex has graph score at most
		\(\psi_{\cC_n^{\Root}}(v_{\mathrm{skel}})\). If a skeleton vertex \(v\) has
		\(\psi_{\cC_n^{\Root}}(v)\le\psi_{\cC_n^{\Root}}(v_{\mathrm{skel}})\), then
		\eqref{eq:graph-weighted-error} gives
		\[
		\psi_{\cB_n^{\Root}}^X(v)
		\le
		\psi_{\cC_n^{\Root}}(v)
		\le
		\psi_{\cC_n^{\Root}}(v_{\mathrm{skel}})
		\le
		\psi_{\cB_n^{\Root}}^X(v_{\mathrm{skel}})+\Delta_n
		\le
		\psi_{\cB_n^{\Root}}^X(v_{\mathrm{skel}})+\delta X(\cB_n^{\Root}).
		\]
		By \eqref{eq:graph-transfer-count}, there are at most \(L\) such skeleton
		vertices, including \(v_{\mathrm{skel}}\). Hence at most \(L\) vertices of
		\(\cC_n^{\Root}\) have graph-Jordan score at most
		\(\psi_{\cC_n^{\Root}}(v_{\mathrm{skel}})\), which implies \(v_{\mathrm{skel}}\in\TopJ_L(\cC_n^{\Root})\)
		for any deterministic tie rule.
	\end{proof}
	
	\begin{prop}[Root finding inside the root component]
		\label{prop:root-finding-root-component}
		Assume \eqref{eq:arch-q-range}. For every \(\eta>0\), there exists
		\(L=L(\eta,q,\gl)<\infty\) such that
		\begin{equation}
			\label{eq:root-component-jordan}
			\liminf_{n\to\infty}
			\PP\left\{
			1\in \TopJ_L(\cC_n^{\Root})
			\right\}
			\ge 1-\eta .
		\end{equation}
	\end{prop}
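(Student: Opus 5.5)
The plan is to assemble the decorated-skeleton pieces already proved: Lemma~\ref{lem:weighted-ua-root-finding}, applied conditionally on \(\cF_n\), supplies the weighted Jordan estimate on the skeleton; Lemma~\ref{lem:decoration-moments} supplies the weights and their moments; and Lemma~\ref{lem:graph-to-weighted-jordan} transfers the conclusion to graph Jordan centrality. Fix \(\eta>0\) and set \(\zeta=\eta/4\). Take \(C=C_\zeta\) from Lemma~\ref{lem:decoration-moments} and \(c=1\). Let \(L=L(\eta/4,1,C_\zeta)\), \(\delta=\delta(\eta/4,1,C_\zeta)\) and \(m_0=m_0(\eta/4,1,C_\zeta)\) be the constants of Lemma~\ref{lem:weighted-ua-root-finding}. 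These depend only on \(\eta,q,\gl\) through \(C_\zeta\), and they do not depend on \(n\) or on \(m=|\cB_n^{\Root}|\) (Remark~\ref{rem:uniform_m_bound}).

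\emph{Step 1: apply the weighted lemma to the raw weights.} Take \(\cG_m=\cF_n\) and \(E_m=E_n^{\mathrm{mom}}(C_\zeta)\cap\{m\ge m_0\}\); both are \(\cF_n\)-measurable. Conditional on \(\cF_n\), the following hold:
\begin{enumeratei}
\item after relabeling \(\cB_n^{\Root}\) in birth order, the internal genealogy is a uniform attachment tree on \([m]\) rooted at vertex \(1\) (Lemma~\ref{lem:root-blob-genealogy});
\item the raw weights \(\widetilde X_u\) are independent, positive, and independent of that genealogy (Lemma~\ref{lem:decoration-moments});
\item on \(E_m\) they satisfy the moment bounds \eqref{eq:weight-moments}.
\end{enumeratei}
Hence \eqref{eq:uniform-conditional-ua} holds on \(E_m\) with \(X=\widetilde X\), with conditional probability at least \(1-\eta/4\). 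Since \(n^{-q}m\to Z_1>0\) a.s., we have \(\PP(m<m_0)\to0\). Together with \eqref{eq:decoration-moments}, this shows that the event
\[
\psi^{\widetilde X}(1)\le(1-\delta)\widetilde X(\cB_n^{\Root}),\qquad |\cA^{\widetilde X}_\delta|\le L,
\]
has \(\liminf\) probability at least \(1-\eta/2\).

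\emph{Step 2: replace raw weights by actual decorations.} By \eqref{eq:decoration-coupling}, with probability tending to one the event of Lemma~\ref{lem:singly-attached-decorations} holds and \(X_u=\widetilde X_u\) for all \(u\). The weighted statements of Step 1 therefore hold for \(X_u=|D_u|\). Next I control \(\Delta_n=\max_u|D_u|\). By \eqref{eq:max-decoration-small}, \(\Delta_n=o_{\PP}(m)\). Since every \(X_u\ge1\), we have \(X(\cB_n^{\Root})\ge m\), so \(\Delta_n\le\frac{\delta}{4}X(\cB_n^{\Root})\) with probability tending to one.

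\emph{Step 3: transfer to graph Jordan centrality.} On the intersection of these events, apply Lemma~\ref{lem:graph-to-weighted-jordan} with \(v_{\mathrm{skel}}=1\). Condition \eqref{eq:graph-transfer-gap} follows from Step 1 and the bound on \(\Delta_n\). Condition \eqref{eq:graph-transfer-count} is exactly \(|\cA^X_\delta|\le L\). The lemma yields \(1\in\TopJ_L(\cC_n^{\Root})\). Collecting the exceptional probabilities gives \(\liminf_n\PP\{1\in\TopJ_L(\cC_n^{\Root})\}\ge1-\eta\).

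The main obstacle is the careful conditioning in Step 1. The moment event is random and only tight, not of probability tending to one, and the tree size \(m\) is random. This is why I rely on the uniform-in-environment form of Lemma~\ref{lem:weighted-ua-root-finding} and keep \(\zeta\) and \(m_0\) fixed before sending \(n\to\infty\). I also need to check that the skeleton root, the vertex labeled \(1\), corresponds to the root of the relabeled uniform attachment tree.
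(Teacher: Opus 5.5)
Your proposal is correct and follows the paper's proof essentially step for step. You use the uniform-in-environment conditional form of Lemma~\ref{lem:weighted-ua-root-finding} for the raw weights \(\widetilde X\) on \(E_n^{\mathrm{mom}}(C_\zeta)\cap\{m\ge m_0\}\), then the coupling \(\widetilde X=X\), the bound \(\Delta_n=o_{\PP}(m)\le\frac{\delta}{4}X(\cB_n^{\Root})\), and the deterministic transfer Lemma~\ref{lem:graph-to-weighted-jordan} with \(v_{\mathrm{skel}}=1\). The only differences are bookkeeping (you take \(\zeta=\eta/4\) and obtain \(1-\eta/2\), while the paper takes \(\zeta=\eta/8\) and obtains \(1-3\eta/8\)), and your remaining check is immediate because vertex \(1\) has the smallest label in \(\cB_n^{\Root}\) and so is the root after relabeling in birth order.
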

	
	\begin{proof}
		It is enough to consider \(0<\eta<1\).  Set \(\zeta=\eta/8\), choose the
		deterministic \(C_\zeta>1\) from
		Lemma~\ref{lem:decoration-moments}, and write
		\(E_n^{\mathrm{mom}}(C_\zeta)\) for the corresponding moment event.
		Let \(m=|\cB_n^{\Root}|\), and let \(\cF_n\) and \((\widetilde X_u)\) be the
		external sigma-field and raw weights defined before
		Lemma~\ref{lem:decoration-moments}.  The three conditional properties recorded
		when \(\cF_n\) was defined give precisely the environment needed below: the
		external component masses and \(m\) are \(\cF_n\)-measurable; after relabeling
		the root blob in birth order, its unrevealed genealogy is conditionally a
		uniform attachment tree; and the disjoint root-to-exterior shortcut families
		are conditionally independent of one another and of that genealogy.  Hence the
		raw weights are conditionally independent, positive, and independent of the
		skeleton.  On \(E_n^{\mathrm{mom}}(C_\zeta)\), their conditional moments satisfy
		the deterministic bounds \(c=1\) and \(C=C_\zeta\).
		
		Apply Lemma~\ref{lem:weighted-ua-root-finding}, in its uniform conditional
		form, with error \(\eta/4\), \(c=1\), and \(C=C_\zeta\), to
			the skeleton with weights \(\widetilde X\).  This gives finite
			\(L=L(\eta/4,1,C_\zeta)\),
			\(\delta=\delta(\eta/4,1,C_\zeta)>0\), and
			\(m_0=m_0(\eta/4,1,C_\zeta)<\infty\), none of which depends on \(n\) or
			on the realized environment.  Since \(\zeta=\eta/8\) and \(C_\zeta\) is
			deterministic once \((\eta,q,\gl)\) are fixed, these constants have only the
			claimed parameter dependence.  Let
		\(E_n^{\mathrm{UA}}\) be the event that
		\begin{equation}
			\label{eq:UA-root-gap}
			\psi_{\cB_n^{\Root}}^{\widetilde X}(1)
			\le
			(1-\delta)\widetilde X(\cB_n^{\Root})
		\end{equation}
		and
		\begin{equation}
			\label{eq:UA-competitor-count}
			\#\left\{
			v\in\cB_n^{\Root}:
			\psi_{\cB_n^{\Root}}^{\widetilde X}(v)
			\le
			\psi_{\cB_n^{\Root}}^{\widetilde X}(1)
			+\delta\widetilde X(\cB_n^{\Root})
			\right\}
			\le L.
		\end{equation}
		The application at the random size \(m\) is justified as
			follows.  The vertex set of \(\cB_n^{\Root}\), and hence \(m\), is
			\(\cF_n\)-measurable, and on \(E_n^{\mathrm{mom}}(C_\zeta)\) the skeleton and the raw
			weights form, conditionally on \(\cF_n\), an environment satisfying the
			hypotheses of Lemma~\ref{lem:weighted-ua-root-finding} at size \(m\), with
			the same deterministic moment bounds for every realized environment.
			Therefore, by
			\eqref{eq:uniform-conditional-ua}, on
			\(E_n^{\mathrm{mom}}(C_\zeta)\cap\{m\ge m_0\}\),
			\[
			\PP\left(
			(E_n^{\mathrm{UA}})^c
			\mid \cF_n
			\right)
			\le
			\frac{\eta}{4}.
			\]
			Taking expectations,
			\[
			\PP\big((E_n^{\mathrm{UA}})^c\big)
			\le
			\frac{\eta}{4}
			+\PP\big(E_n^{\mathrm{mom}}(C_\zeta)^c\big)
			+\PP(m<m_0) .
			\]
			By \eqref{eq:decoration-moments}, the upper limit of the middle failure
			probability is at most \(\zeta=\eta/8\).  Moreover,
			\(m=|\cB_n^{\Root}|\to\infty\) in probability, since
			\(n^{-q}|\cB_n^{\Root}|\) converges in distribution to a strictly positive
			limit by Proposition~\ref{prop:rrt-blob-input}, while
			\(m_0\) is a deterministic finite constant.  Consequently,
		\[
		\liminf_{n\to\infty}\PP(E_n^{\mathrm{UA}})
		\ge 1-\frac{\eta}{4}-\zeta
		=1-\frac{3\eta}{8}.
		\]
		
		Let
		\[
		E_n^{\mathrm{coup}}
		:=
		\left\{
		\widetilde X_u=X_u\text{ for all }u\in\cB_n^{\Root}
		\right\},
		\]
		and let \(E_n^{\mathrm{sing}}\) be the single-attachment event from
		Lemma~\ref{lem:singly-attached-decorations}.  Both events have probability
		tending to one.  On \(E_n^{\mathrm{sing}}\), put
		\[
		\Delta_n:=\max_{u\in\cB_n^{\Root}}|D_u|
		=
		\max_{u\in\cB_n^{\Root}}X_u .
		\]
		Set \(\Delta_n=0\) off \(E_n^{\mathrm{sing}}\), for definiteness.
		Since Lemma~\ref{lem:decoration-moments} gives
		\(\max_uX_u=o_{\PP}(m)\), while \(X(\cB_n^{\Root})\ge m\),
		\[
		E_n^{\Delta}
		:=
		E_n^{\mathrm{sing}}\cap
		\left\{
		\Delta_n\le \frac{\delta}{4}X(\cB_n^{\Root})
		\right\}
		\]
		has probability tending to one.
		
		On
		\[
		E_n^{\mathrm{UA}}\cap E_n^{\mathrm{coup}}\cap E_n^{\Delta},
		\]
		the following hold.  On \(E_n^{\mathrm{coup}}\) the weight
			vectors coincide, \(\widetilde X_u=X_u\) for all \(u\in\cB_n^{\Root}\), so
			\(\psi_{\cB_n^{\Root}}^{\widetilde X}\equiv\psi_{\cB_n^{\Root}}^{X}\) and
			\(\widetilde X(\cB_n^{\Root})=X(\cB_n^{\Root})\); hence the inequalities
			\eqref{eq:UA-root-gap} and \eqref{eq:UA-competitor-count} defining
			\(E_n^{\mathrm{UA}}\) hold verbatim with \(X\) in place of
			\(\widetilde X\).  These are exactly the second inequality of
			\eqref{eq:graph-transfer-gap} and the count \eqref{eq:graph-transfer-count},
			with \(v_{\mathrm{skel}}=1\) and the same \(\delta\) and \(L\), while
			\(E_n^{\Delta}\) supplies the first inequality of
			\eqref{eq:graph-transfer-gap}.  Thus the hypotheses of
			Lemma~\ref{lem:graph-to-weighted-jordan} hold with
			\(v_{\mathrm{skel}}=1\).  Therefore
		\(1\in\TopJ_L(\cC_n^{\Root})\) on this intersection.  The coupling,
		single-attachment, and maximal-decoration events have failure probability
		\(o(1)\).  Thus the lower limiting probability of this intersection is at
		least \(1-3\eta/8\), and hence the lower limit in
		\eqref{eq:root-component-jordan} is at least \(1-\eta\).
	\end{proof}
	
	\subsubsection{Proof of Theorem~\ref{thm:network-archeology-root-finding}}
	\label{sec:proof-network-archeology-root-finding}
	
	\begin{proof}
		Apply Theorem~\ref{thm:subcritical-blob-picture} with \(p=q\), which is
		allowed because \(q<1/(\gl+2)\).  Let
		\[
		A_{n,K}:=\{\rank(\cB_n^{\Root})\le K\}.
		\]
		By part (a), choose \(K\) so large that
		\[
		\liminf_{n\to\infty}\PP(A_{n,K})\ge 1-\eps/4 .
		\]
		For this fixed \(K\), let \(B_{n,K}\) be the event that the \(K\) largest full
		\(q\)-percolated components are exactly the components seeded by the \(K\)
		largest backbone blobs, with the deterministic tie rule.  By part (c),
		\(\PP(B_{n,K})\to1\).  On \(A_{n,K}\cap B_{n,K}\), the root blob is one of
		the \(K\) largest backbone blobs, and therefore the full component containing
		it is one of
		\[
		\cC_{n,1}^{(q)},\ldots,\cC_{n,K}^{(q)} .
		\]
		
		Choose \(L\) from Proposition~\ref{prop:root-finding-root-component} with
		\(\eta=\eps/2\), and put
		\[
		D_{n,L}:=
		\{1\in\TopJ_L(\cC_n^{\Root})\}.
		\]
		Then
		\[
		\liminf_{n\to\infty}\PP(D_{n,L})\ge 1-\eps/2 .
		\]
		On \(A_{n,K}\cap B_{n,K}\cap D_{n,L}\), the algorithm
		\(\cH_{K,L}\) examines the root component and includes the root among the
		\(L\) best Jordan vertices of that component.  Hence
		\[
		\PP\{1\in\cH_{K,L}(G_n^\circ)\}
		\ge
		\PP(A_{n,K}\cap B_{n,K}\cap D_{n,L}),
		\]
		and the union bound gives \eqref{eq:network-archeology-root-finding}.
	\end{proof}
	
	\section*{Acknowledgements}
	
	Shankar Bhamidi and Akshay Sakanaveeti were partially supported by NSF grants
	DMS-2413928 and DMS-2434559.  Shankar Bhamidi was also partially
	funded by NSF RTG grant DMS-2134107.
	
	\bibliographystyle{alpha}
	\bibliography{subcritical_blob_proof_refs}
	
\end{document}

%% file: style.tex
\usepackage{mathrsfs,xfrac} 
\usepackage[colorlinks=true,linkcolor=blue,citecolor=blue,urlcolor=blue]{hyperref} 
\usepackage{amsmath,amssymb,amsthm,amsfonts,amsbsy,latexsym,dsfont,color,graphicx,enumitem,tikz}
\usepackage[foot]{amsaddr}
\usepackage{caption}

\newtheorem{thm}{Theorem}[section]
\newtheorem{lem}[thm]{Lemma}
\newtheorem{cor}[thm]{Corollary}
\newtheorem{prop}[thm]{Proposition}

\theoremstyle{definition}
\newtheorem{rem}{Remark}[section]
\usepackage{etoolbox}
\AtEndEnvironment{rem}{\hfill\(\square\)}
\renewcommand{\le}{\leqslant}  
\renewcommand{\ge}{\geqslant} 
\newcommand{\eset}{\varnothing}

\newcommand{\ind}{\mathds{1}}
\newcommand{\eps}{\varepsilon}

\newcommand{\probc}{\stackrel{\mathrm{P}}{\longrightarrow}}
\newcommand{\weakc}{\stackrel{\mathrm{w}}{\longrightarrow}}

     \let\gl=\lambda

\newcommand{\cA}{\mathcal{A}}\newcommand{\cB}{\mathcal{B}}\newcommand{\cC}{\mathcal{C}}
\newcommand{\cD}{\mathcal{D}}\newcommand{\cF}{\mathcal{F}}
\newcommand{\cG}{\mathcal{G}}\newcommand{\cH}{\mathcal{H}}

\newcommand{\vw}{\mathbf{w}}
 
\newcommand{\mvw}{\boldsymbol{w}}

\newcommand{\w}{\mvw}
 
\usepackage{enumitem}

\newenvironment{enumeratei}
  {\begin{enumerate}[label=\textup{(\roman*)}, ref=\textup{(\roman*)}]}
  {\end{enumerate}}

\newenvironment{enumeratea}
  {\begin{enumerate}[label=\textup{(\alph*)}, ref=\textup{(\alph*)}]}
  {\end{enumerate}}

